\theoremstyle{plain}
\newtheorem{theorem}{Theorem}[section]
\newtheorem{lemma}[theorem]{Lemma}
\theoremstyle{definition}
\newtheorem{definition}[theorem]{Definition}
\theoremstyle{remark}
\newtheorem{remark}{Remark}
\begin{document}


\title{Stabilization of linear waves with inhomogeneous Neumann boundary conditions}

\author{
\name{Türker Özsarı\textsuperscript{a} and İdem Susuzlu\textsuperscript{b}}
\affil{\textsuperscript{a}Department of Mathematics, Bilkent University, Ankara, Turkey; \textsuperscript{b}Department of Mathematics, Izmir Institute of Technology, Izmir, Turkey}
}

\maketitle

\begin{abstract}
  We study linear damped and viscoelastic wave equations evolving on a bounded domain. For both models, we assume that waves are subject to an inhomogeneous Neumann boundary condition on a portion of the domain's boundary. The analysis of these models presents additional interesting features and challenges compared to their homogeneous counterparts. In the present context, energy depends on the boundary trace of velocity. It is not clear in advance how this quantity should be controlled based on the given data, due to regularity issues. However, we establish global existence and also prove uniform stabilization of solutions with decay rates characterized by the Neumann input. We supplement these results with numerical simulations in which the data do not necessarily satisfy the given assumptions for decay. These simulations provide, at a numerical level, insights into how energy could possibly change in the presence of, for example, improper data.
\end{abstract}

\begin{keywords}
Viscoelastic wave equation; damping; stabilization; decay rates; nonhomogeneous boundary conditions
\end{keywords}

\section{Introduction}
\subsection{Mathematical models and notation}
In this paper, we examine the global existence and stabilization of the linear damped and viscoelastic wave equations, which are denoted as follows:
\begin{align}\label{damped}
	u_{tt}-\bigtriangleup u & + a(x)u_{t}=0
\end{align} 
and
\begin{equation}\label{lineq}u_{tt}-\bigtriangleup u + \int_{0}^{t}g(t-s)\bigtriangleup u(s)ds=0.
\end{equation}
We consider both equations on a general domain $\Omega\subset \mathbb{R}^d$ ($d\ge 1$) with a smooth boundary $\Gamma=\partial \Omega$ in an arbitrary time interval $(0,T)$.  The boundary is partitioned into two parts $\Gamma_i$, $i=0,1$ that $\Gamma=\overline{\Gamma_{0}}\cup \overline{\Gamma_{1}}$ with $\Gamma_{0}\neq \emptyset$ and $\Gamma_{0}\cap  \Gamma_{1} =\emptyset$. We assume the presence of an inhomogeneous Neumann manipulation on $\Gamma_1$:
\begin{equation}\label{neumann-bc}{\frac{\partial u }{\partial n}}\Big|_{\Gamma_{1}} = h(x,t),\end{equation}
whereas homogeneous Dirichlet boundary condition is imposed on $\Gamma_0$:
\begin{equation}\label{dirichlet-bc}u|_{\Gamma_{0}} =0.\end{equation}  The Neumann boundary input and initial data satisfy \begin{equation}h\in C^1([0,\infty);L^2(\Gamma_1))\end{equation} 
and \begin{equation}\label{initial-cond}(u(0),u_t(0))=(u^0,u^1)\in H_{\Gamma_0}^1(\Omega)\times L^2(\Omega),
\end{equation} respectively.  

Regarding the linear damped wave equation \eqref{damped}, we assume that the distributed damping coefficient $a=a(x)$ is bounded from below and above by two positive constants on $\Omega$: 
\begin{equation}\label{a(x)}
	a_{\max}\geq a(x)\geq a_{\min}>0.
\end{equation}
We associate the linear damped wave equation in \eqref{damped} with the energy functional 
\begin{equation}\label{energy-damped}
	\varepsilon_a (t):= \frac{1}{2} \| \nabla u(t)  \|_{L^2(\Omega)}^{2}+\frac{1}{2}\| u_{t}(t)\|_{L^2(\Omega)}^{2}.
\end{equation}

Regarding the viscoelastic wave equation \eqref{lineq}, we assume that (the relaxation function) $g=g(t)$ satisfies the following properties:
\begin{itemize}
	\item[(A1)] $g\in C^1([0,\infty);\mathbb{R}_{+})$ with $1-2\int_{0}^{\infty}g(s)ds>L$ for some $L\in (0,1)$,
	\item[(A2)] there exists $\xi\in C^1([0,\infty); \mathbb{R}_{+})$ satisfying
	$$g'(t)\leq -\xi(t)g(t),\ \ \left|\frac{\xi'(t)}{\xi(t)}\right| \leq k, \ \ \xi'(t)\leq 0,  \ \ \forall t >0. $$
\end{itemize}
\noindent Hypothesis (A2) has been widely used in the literature of viscoelastic wave equations; see for instance, \citet{messaoudi2008} and \citet{paul2022}. In addition, \citet{han2009}, \citet{liu2009-2}, and \citet{liu2012} use very similar assumptions but are supplemented with a few further global conditions.  The assumption (A2) elucidates the decay rate of the relaxation function \( g(t) \). This rate is intrinsically linked to the rate at which the material dissipates energy. A higher relaxation rate corresponds to a more rapid energy dissipation.
Knowing this rate offers insights into the viscoelastic behavior of the material under various loading conditions.  

The viscoelastic wave equation is associated with the (modified) energy functional
\begin{equation}\label{defmod}\varepsilon_g (t):= \frac{1}{2}\bigg(1-\int_{0}^{t}g(s)ds\bigg) \parallel \nabla u(t) \parallel_{L^2(\Omega)}^{2}+\frac{1}{2}\parallel u_{t}(t)\parallel_{L^2(\Omega)}^{2}+\frac{1}{2}(g\circ \nabla u)(t),\end{equation}
where, for $\phi:(0,t)\rightarrow L^2(\Omega)$, we set
$$(g\circ \phi)(t):= \int_{0}^{t}g(t-s)\parallel \phi(t)-\phi(s) \parallel_{L^2(\Omega)}^2 ds .$$
We denote the inner products on $L^2(\Omega)$ and $L^2(\Gamma_1)$ by $(\cdot,\cdot)_{\Omega}$ and $\langle\cdot,\cdot\rangle_{\Gamma_1}$, respectively. The Banach space pairing for $H_{\Gamma_0}^1(\Omega)$ and its dual $(H_{\Gamma_0}^1(\Omega))'$ will be denoted by $<\cdot,\cdot>$.  

We write $c$ for a (generic) positive constant in estimates. Any subscript appended to $c$ is used to emphasize a dependence, e.g., $c_T$ is a constant which depends on $T$.  In the latter case, $c_T$ may depend on some other parameters too such as domain $\Omega$, but we may not write them as subscripts if they are not crucial.

\begin{remark}We sometimes use the term ``Neumann manipulation'' instead of ``inhomogeneous Neumann boundary condition'' since the behavior of solutions can be altered depending on the choice of Neumann input. For instance, in certain frameworks such as control theory, the boundary inputs are controls acting on the open loop systems with the aim of manipulating the behavior of solutions in a prescribed sense.  Note that one can for example choose the boundary input in a certain way to pump up the energy of the system, thereby manipulating the qualitative properties of solutions. This  will be illustrated numerically in subsequent chapters.
\end{remark}

\subsection{Motivation and orientation} Studying wave models subject to external manipulations offers potential applications in engineering. 
There is extensive literature on linear and nonlinear damped and viscoelastic wave models. Most of these papers deal with the case of homogeneous boundary conditions.  However, it is important to study the effects of external manipulations on the qualitative behavior of solutions.  This boosts the physical realism by accounting for boundary noise, helps us understand the stability and features of the system, and allows us to build boundary controls that govern the dynamics in a prescribed manner.

This work sheds light on the mathematical properties of certain physical wave models where inhomogeneous Neumann boundary conditions play a crucial role.  From a physical point of view, a homogeneous Neumann condition implies no flux or flow of the wave across the boundary, where the wave is reflected back into the domain with no loss of energy. In contrast, an inhomogeneous Neumann condition involves a prescribed flux or flow across the boundary, which could represent an external source or sink of energy, momentum, or pressure at the surface. In the context of waves, this means that the boundary is not perfectly reflective, allowing energy to either enter or leave the domain, depending on the spatio-temporal values of Neumann data.   

There are tangible examples of damped and viscoelastic wave models subject to inhomogeneous Neumann boundary condition in the physical world. An example of the damped wave model is a string stretched between two points, subjected to air resistance or internal friction within the material, with one end being manipulated by an external force that varies over time. Such a force can be created for instance via an external device that pushes or pulls on one end of the string, introducing a non-zero flux of wave energy at that boundary. This external force either adds energy to or extracts energy from the system, altering the wave's behavior throughout the domain and at the boundary. 

Designing new viscoelastic materials requires understanding how these materials respond to external forces.  Recent efforts in applied mathematics have explored viscoelasticity in the presence of inhomogeneous boundary data.  For instance, \citet{eruslu18} and \citet{erusluphd} studied waves propagating in a viscoelastic solid subject to inhomogeneous Dirichlet and Neumann type manipulations.  Unlike the equation \eqref{lineq} considered here, their main equations did not involve the Laplacian.    In the works of \citet{eruslu18} and \citet{erusluphd}, the authors investigated the stability of various viscoelastic models (Zener, Maxwell, and Voigt) characterized by the material properties of the solid.  They also provided various numerical experiments highlighting differences between these viscoelastic models.  

Another physical application of the viscoelastic wave model with inhomogeneous boundary conditions could be  modeling the effect of human activities on seismic waves.  Mechanical loads such as heavy vehicles or machinery can introduce additional stress into the Earth's crust, affecting the propagation of seismic waves.

It may be assumed that the damped and viscoelastic wave equations with external manipulations can be handled as those with homogeneous boundary conditions, but this is not the case.  For instance, when $h\equiv 0$ or $\Gamma_{1}=\emptyset$, it is easy to show that the energy of the solution for both problems is nonincreasing, as homogeneous case allows for a straightforward proof that
\begin{align*}
	\varepsilon'_a(t) = -\int_{\Omega}a(x)|u_t(t)|^2 dx \leq 0.
\end{align*}
and
$$\varepsilon'_g (t)= \frac{1}{2}(g'\circ \nabla u)(t)-\frac{1}{2}g(t)\|\nabla u(t)\|_{L^2(\Omega)}^2\le 0.$$ The above inequalities indicate that the energy can be effectively controlled by initial data chosen from a physically reasonable space. However, when the evolution is subject to external manipulation, as in the present paper, the rate of energy change takes the following complex forms for the respective problems, assuming a sufficiently smooth solution:
\begin{align}\label{neumann-energydot-damped}
	\varepsilon'_a(t) = -\int_{\Omega}a(x)|u_t(t)|^2 dx +\int_{\Gamma_1} h u_t d\Gamma,
\end{align}
\begin{align}\label{e1}
	\begin{split}
		\varepsilon'_g(t) = &\frac{1}{2}(g'\circ \nabla u)(t)-\frac{1}{2}g(t)\|\nabla u(t)\|_{L^2(\Omega)}^2+\int_{\Gamma_{1}}u_{t}(t)h(t)d\Gamma\\
		&-\int_{\Gamma_{1}}u_{t}(t)\int_{0}^{t}g(t-s)h(s)dsd\Gamma.
	\end{split}
\end{align}From these expressions, it is unclear how the energy behaves, as we do not know in advance how the boundary integrals involving $u_t|_{\Gamma_1}$ are controlled in terms of given data. In particular, for a solution in the energy space with $u_t\in L^\infty(0,T;L^2(\Omega))$, a well-defined Dirichlet trace $u_t|_{\Gamma_1}$ is not guaranteed according to Sobolev trace theory. Therefore, the boundary integrals in the energy estimates require further analysis, which is addressed in the next section in several steps:  

\begin{itemize}
	\item[Step 1:] We begin by employing the \emph{dynamic extension} method to homogenize boundary conditions. Specifically, we extend the given Neumann boundary datum as the unique weak solution of the  initial boundary value problem (ibvp) associated with the classical wave equation. This approach allows us to establish regularity properties of the dynamic extension model using techniques for nonhomogeneous ibvps.  This method is more efficient than taking an arbitrary extension of boundary datum, see Remark \ref{dynext} below.
	\item [Step 2:] Next, we construct the weak solutions for the homogenized models.  For the damped wave equation, we employ the semigroup approach, while for the viscoelastic model, we first construct approximate solutions using the Faedo-Galerkin method, and then establish a priori estimates for these approximations, which ultimately lead to existence of a weak solution for the homogenized model through compactness techniques. The global unique solutions of the original models are then obtained through a reunification argument. 
	\item[Step 3:] We establish the general uniform decay estimates using the multiplier (energy) technique.  The most challenging aspect of this process is managing boundary traces, such as $u_t|_{\Gamma_1}$, which are absent in the case of homogeneous boundary conditions.  To address this challenge, we perform integration by parts in time to relax the strict regularity requirements for weak solutions.  The main results of the paper are presented in Theorem \ref{TemamMainThm} and Theorem \ref{mainresultthm}.
\end{itemize}

The theoretical results on general decay rates are also supported by numerical simulations in the one-dimensional setting. We develop the numerical solution for the wave equation with a distributed damping using an explicit method, while for the viscoelastic wave equation, we employ the Crank-Nicolson method.  Several numerical experiments are provided to offer strong physical insights into how the energy of solutions can be modified by the combined effects of damping and external manipulation.  Our simulations include cases where the boundary datum does not necessarily satisfy the assumptions needed for establishing general uniform decay rates.  Indeed, these controversial examples demonstrate that the energy of solutions can even increase or be kept away from zero through appropriate external manipulation.

\subsection{Previous studies}
Evolution equations modeling linear waves with distributed damping and viscoelasticity have been widely studied in the literature.  \citet{rauch74} proved exponential decay of solutions to hyperbolic equations on manifolds with distributed damping (see also \citealp{rauch1976}). In one dimensional setting, \citet{cox95} identified the optimal decay rates associated with distributed damping coefficients of bounded variation. \cite{shu97}  established the exact interior controllability of the the one dimensional model.  For uniform stabilization of the wave equation in higher dimensions with a distributed damping, see e.g., \cite{zua85}, \cite{kom94}, \cite{nakao97}, \cite{teb98}, and \cite{mart00}. \cite{daou12} investigated the decay of solutions with a distributed damping for those evolutions subject to an internal manipulation.  For a general overview of damped wave equations, refer to \cite{ala10}.

In the early 1970s, \cite{dafermos1970} studied the one dimensional viscoelastic wave equation with Dirichlet boundary conditions.  He proved existence, uniqueness and asymptotic stability of solutions associated with a specified history but did not provide decay rate estimates.  \cite{dassios1992} later investigated linear viscoelasticity in $\mathbb{R}^3$ and established (polynomial) decay rates for kinetic and strain energies of longitudinal and transverse components of solutions.  \cite{rivera1994} analyzed linear isotropic homogeneous viscoelastic solids of integral type on domains and proved exponential decay for exponentially decaying memory kernels on bounded domains, whereas only a polynomial decay was established on the whole space.  \cite{rivera1996} extended this analysis, showing that algebraically decaying kernels yield algebraically decaying solutions, where the rate is determined by the relaxation function. \cite{rivera2001} considered a (partially) viscoelastic wave equation where memory takes effect only on a subregion of the underlying physical medium.  They proved that energy decays exponentially if the relaxation function decays similarly, with dissipation near boundary.  \cite{rivera2003} investigated a class of abstract viscoelastic systems in which memory term has the form  $g\ast \mathcal{A}^\alpha u$ with  $\mathcal{A}$ being a strictly positive, self-adjoint operator. They showed that solutions decay polynomially when $0 < \alpha < 1$,  even with  exponentially decaying kernels.  \cite{Fab02} established that decay rates in viscoelastic wave equations with linear damping are determined by the relaxation function. \cite{messaoudi2008} expanded classical decay rates (exponential, polynomial) to more general relaxation functions. Most recently, there have been some effort in the direction of further weakening the assumptions on the relaxation function and proving general decay rates, see e.g., \cite{messaoudi2009}, \cite{mustafa-messaoudi2012} and \cite{alabau2009}, \cite{Las13} and \cite{mustafa2017}.  Qualitative behavior of solutions of viscoelastic wave equations has also been studied in other directions. For an overview of decay properties in viscoelastic wave equations with memory terms, in which temporal integration is taken over the half line, see the survey by  \cite{Vit09}.

Regarding homogeneous Neumann boundary conditions, few results exist for similar models.  \cite{mart00} is among the few that addresses locally damped wave models with homogeneous Neumann boundary conditions. He proves decay of the energy by using a method incorporating piecewise multipliers. His study is based on the construction of multipliers adapted to geometrical conditions.  However, unlike Martinez’s case, in our study with nonhomogeneous Neumann boundary conditions, energy decay or a specific decay rate is not guaranteed.  In particular, initial data that lead to exponentially or polynomially decaying solutions under homogeneous Neumann conditions do not necessarily yield solutions with similar behavior satisfying nonhomogeneous boundary conditions.  In the present work, whether decay occurs depends on the temporal behavior of the boundary manipulation, and in some cases, there may be no decay, such as when the boundary manipulation grows over time in a relevant norm.

To the best of our knowledge, for the viscoelastic model with homogeneous Neumann boundary conditions, no closely related studies exist.  However, research has been conducted with acoustic boundary conditions, see e.g., \cite{baukhatem},  and nonlinear boundary feedback or damping, see e.g.,  \cite{cavalcantidissipative},  \cite{suggested}, \cite{farida2020}, and \cite{jiali2020}, where the energy is nonincreasing and various decay rates are achieved depending on the relaxation function.  

Interestingly, the nonhomogeneous Dirichlet problem is more difficult to address than the nonhomogeneous Neumann problem studied here.  The former requires controlling a boundary term involving the outward normal derivative for which one typically establishes hidden trace estimates using special multipliers (e.g., $q\cdot \nabla u$, $q$ is a smooth vector field). These estimates are hindered by the viscoelastic term.  In contrast, for nonhomogeneous Neumann boundary conditions, the boundary term involves the trace of the velocity, which is easier to control via integration by parts and Sobolev trace estimates.  To maintain consistency, we also consider the damped wave equation with nonhomogeneous Neumann boundary conditions. It is worth noting that damped wave equations are more amenable to hidden trace estimates, so handling the corresponding problem with nonhomogeneous Dirichlet conditions is feasible but beyond the scope of this paper.

\section{Dynamic extension for the wave equation}
\label{linv}
We begin by introducing a dynamic extension method and use this for extending the given inhomogeneous Neumann boundary input as well as initial data as the solution of the classical wave equation. This approach will later help in homogenizing the linear wave models under consideration.  More rigorously, we define the dynamic extension map $E:(h,u^0,u^1)\mapsto v$ where $v=E(h,u^0,u^1)$ is defined to be the solution of the following initial boundary value problem:
\begin{align}\label{vpart}
	\left\{
	\begin{array}{ll}
		v_{tt} =\bigtriangleup v   &\text{in } \Omega\times (0,T),\\
		v(0)=u^{0},\  v_{t}(0)=u^{1}\quad &\text{in } \Omega,\\
		v =0  & \textrm{in} \ \  \Gamma_{0}\times (0,T),   \\
		{\frac{\partial v }{\partial n}} = h & \textrm{in} \ \  \Gamma_{1}\times (0,T).
	\end{array}
	\right.
\end{align}
The dynamic extension operator has the following crucial property:
\begin{lemma}\label{extlem}
	$E:(h,u^0,u^1)\rightarrow (v,v_t)$ is continuous from $$C^1([0,T];L^2(\Gamma_1))\times H_{\Gamma_0}^1(\Omega)\times L^{2}(\Omega)$$ into $$C([0,T];H_{\Gamma_0}^1(\Omega))\times C([0,T];L^2(\Omega)).$$
\end{lemma}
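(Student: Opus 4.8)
Since \eqref{vpart} is linear, $E$ is a linear map, so it suffices to prove that \eqref{vpart} admits a unique weak solution $(v,v_t)\in C([0,T];H_{\Gamma_0}^1(\Omega))\times C([0,T];L^2(\Omega))$ obeying the a priori bound
\[
\sup_{t\in[0,T]}\Big(\|\nabla v(t)\|_{L^2(\Omega)}^2+\|v_t(t)\|_{L^2(\Omega)}^2\Big)\le c_T\Big(\|u^0\|_{H_{\Gamma_0}^1(\Omega)}^2+\|u^1\|_{L^2(\Omega)}^2+\|h\|_{C^1([0,T];L^2(\Gamma_1))}^2\Big).
\]
The plan is to construct $v$ by the Faedo--Galerkin method: let $\{w_j\}$ be the eigenfunctions of $-\bigtriangleup$ under the homogeneous mixed boundary conditions ($w_j=0$ on $\Gamma_0$, $\partial_n w_j=0$ on $\Gamma_1$), orthonormal in $L^2(\Omega)$; solve the resulting linear ODE system for the approximants $v^m=\sum_{j\le m}c_j^m(t)w_j$ with $v^m(0)$, $v_t^m(0)$ the $L^2$-projections of $u^0$, $u^1$; derive uniform estimates; and pass to the limit.

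The crux is the energy estimate for $v^m$. Testing the Galerkin identity against $v_t^m$ gives
\[
\tfrac12\tfrac{d}{dt}\Big(\|v_t^m(t)\|_{L^2(\Omega)}^2+\|\nabla v^m(t)\|_{L^2(\Omega)}^2\Big)=\langle h(t),v_t^m(t)\rangle_{\Gamma_1},
\]
whose right-hand side cannot be controlled directly by the energy, since the Dirichlet trace of a velocity that is only in $L^2(\Omega)$ need not be defined. To circumvent this, I would integrate in time and then \emph{integrate by parts in time} in the boundary term, which is legitimate precisely because $h\in C^1([0,T];L^2(\Gamma_1))$:
\[
\int_0^t\langle h(s),v_t^m(s)\rangle_{\Gamma_1}\,ds=\langle h(t),v^m(t)\rangle_{\Gamma_1}-\langle h(0),v^m(0)\rangle_{\Gamma_1}-\int_0^t\langle h'(s),v^m(s)\rangle_{\Gamma_1}\,ds.
\]
Now every boundary pairing involves $v^m\in H_{\Gamma_0}^1(\Omega)$, and the Sobolev trace theorem together with the Poincar\'e inequality on $H_{\Gamma_0}^1(\Omega)$ (valid since $\Gamma_0\neq\emptyset$) yields $\|v^m\|_{L^2(\Gamma_1)}\le c\|\nabla v^m\|_{L^2(\Omega)}$. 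Applying Young's inequality to absorb a small multiple of $\|\nabla v^m(t)\|_{L^2(\Omega)}^2$ into the left-hand side, bounding $\|v^m(0)\|_{L^2(\Gamma_1)}$ by $\|u^0\|_{H_{\Gamma_0}^1(\Omega)}$, and closing the remaining $\int_0^t\|\nabla v^m(s)\|_{L^2(\Omega)}^2\,ds$ by Gronwall's inequality, one gets the displayed bound with $c_T$ uniform in $m$ and in $t\in[0,T]$.

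With this uniform estimate, the remaining steps are standard: extract a subsequence converging weakly-$*$ in $L^\infty(0,T;H_{\Gamma_0}^1(\Omega))$ with time derivatives converging weakly-$*$ in $L^\infty(0,T;L^2(\Omega))$, pass to the limit in the Galerkin equations to identify the limit $v=E(h,u^0,u^1)$ as a weak solution of \eqref{vpart}, and note $v_{tt}\in L^\infty(0,T;(H_{\Gamma_0}^1(\Omega))')$ from the equation; uniqueness follows from the energy identity applied to the difference of two solutions with vanishing data. The one genuinely delicate point beyond the time-integration-by-parts device is upgrading the weak-$*$ (hence $C_w$) regularity to $v\in C([0,T];H_{\Gamma_0}^1(\Omega))$ and $v_t\in C([0,T];L^2(\Omega))$; I would do this by the classical argument combining weak continuity with the energy identity to make $t\mapsto\|\nabla v(t)\|_{L^2(\Omega)}^2+\|v_t(t)\|_{L^2(\Omega)}^2$ continuous (as in Lions--Magenes / Temam). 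Passing the uniform bound to the limit by weak lower semicontinuity of the norms then delivers exactly the claimed continuity of $E$.
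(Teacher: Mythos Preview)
Your argument is correct. The paper does not actually give a proof of this lemma: its entire proof consists of the sentence ``For details, see \citep[Remark 2.3]{pitts2002}.'' So there is no approach to compare against in the paper itself; you have supplied the details the authors outsourced to a reference.

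It is worth noting that your key device---integrating the boundary term $\int_0^t\langle h,v_t^m\rangle_{\Gamma_1}\,ds$ by parts in time so as to shift the time derivative from $v^m$ onto $h$, thereby replacing ill-defined velocity traces by well-defined $H^1$ traces---is precisely the mechanism the paper exploits later in its own stabilization proofs (see \eqref{rewrited_term} in the damped case and \eqref{uthgamma1}--\eqref{guthgamma1} in the viscoelastic case). So your proof is not only correct but entirely in the spirit of the paper, and the $C^1$ regularity in time of $h$ is used for exactly the reason you identify. The only point that deserves an extra word of care is the upgrade from weak-$*$ to strong continuity in time, which you flag and handle by the standard Lions--Magenes argument; this is indeed the one nontrivial step beyond the a priori estimate.
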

\begin{proof}For details, see \citep[Remark 2.3]{pitts2002}.
\end{proof}

We can further improve the regularity under stronger assumptions on the initial and boundary data.
\begin{lemma}\label{extlemreg}
	$E:(h,u^0,u^1)\rightarrow (v,v_t)$ is continuous from the subspace of $$C^2([0,T];L^2(\Gamma_1))\cap C([0,T];H^{1/2}(\Gamma_1))\times H^2(\Omega)\cap H_{\Gamma_0}^1(\Omega)\times H_{\Gamma_0}^{1}(\Omega)$$ with elements satisfying $\displaystyle{\left.\frac{\partial u^0 }{\partial n}\right|}_{\Gamma_{1}} = h(0)$ into $$C([0,T];H^2(\Omega)\cap H_{\Gamma_0}^1(\Omega))\times C([0,T];H_{\Gamma_0}^1(\Omega)).$$
\end{lemma}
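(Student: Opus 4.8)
\emph{Proof proposal.} Since the map $E$ is linear, continuity between the indicated spaces is equivalent to an \emph{a priori} bound, and in view of the continuity already furnished by Lemma \ref{extlem} it is enough to establish that bound for data $(h,u^0,u^1)$ that are additionally smooth in space and time (with $\partial_n u^0|_{\Gamma_1}=h(0)$ preserved), the general case following by density. The plan is to gain one derivative over the energy level of Lemma \ref{extlem} in two stages: first in time, by differentiating the ibvp \eqref{vpart}, and then in space, by elliptic regularity for the resulting stationary mixed boundary value problem.

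\emph{Step 1 (differentiation in time).} Put $w:=v_t$. Differentiating \eqref{vpart} in $t$, $w$ solves $w_{tt}=\Delta w$ in $\Omega\times(0,T)$ with $w|_{\Gamma_0}=0$ (from $v|_{\Gamma_0}=0$), $\partial_n w|_{\Gamma_1}=h_t$ (from the Neumann condition), and initial data $w(0)=v_t(0)=u^1$, $w_t(0)=v_{tt}(0)=\Delta v(0)=\Delta u^0$, where the last identity uses the equation at $t=0$. Under the hypotheses of the lemma, $h_t\in C^1([0,T];L^2(\Gamma_1))$, $u^1\in H_{\Gamma_0}^1(\Omega)$, and $\Delta u^0\in L^2(\Omega)$ because $u^0\in H^2(\Omega)$; hence $(h_t,u^1,\Delta u^0)$ lies exactly in the domain of the map $E$ as treated in Lemma \ref{extlem}. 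Applying Lemma \ref{extlem} to the $w$-problem yields $v_t=w\in C([0,T];H_{\Gamma_0}^1(\Omega))$ and $v_{tt}=w_t\in C([0,T];L^2(\Omega))$, together with
$$\|v_t\|_{C([0,T];H_{\Gamma_0}^1(\Omega))}+\|v_{tt}\|_{C([0,T];L^2(\Omega))}\le c\big(\|h\|_{C^2([0,T];L^2(\Gamma_1))}+\|u^0\|_{H^2(\Omega)}+\|u^1\|_{H_{\Gamma_0}^1(\Omega)}\big).$$
This already delivers the second component of the claimed regularity. (The assertion $w(0)=u^1$, $w_t(0)=\Delta u^0$ and the boundary traces are literal for the smooth approximations, and stable under the limit since $H_{\Gamma_0}^1(\Omega)$ is closed.)

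\emph{Step 2 (elliptic regularity in space).} For each fixed $t$, $v(t)\in H_{\Gamma_0}^1(\Omega)$ solves the mixed problem $-\Delta v(t)=-v_{tt}(t)=:f(t)\in L^2(\Omega)$ with $v(t)|_{\Gamma_0}=0$ and $\partial_n v(t)|_{\Gamma_1}=h(t)\in H^{1/2}(\Gamma_1)$. By the regularity theory for the Dirichlet--Neumann mixed problem on the smooth domain $\Omega$ — and using that the associated form on $H_{\Gamma_0}^1(\Omega)$ is coercive since $\Gamma_0\neq\emptyset$, so the kernel is trivial — one obtains $v(t)\in H^2(\Omega)\cap H_{\Gamma_0}^1(\Omega)$ with
$$\|v(t)\|_{H^2(\Omega)}\le c\big(\|v_{tt}(t)\|_{L^2(\Omega)}+\|h(t)\|_{H^{1/2}(\Gamma_1)}\big).$$
Since $v_{tt}\in C([0,T];L^2(\Omega))$ by Step 1 and $h\in C([0,T];H^{1/2}(\Gamma_1))$ by hypothesis, applying this estimate to the differences $v(t)-v(t')$ (which solve the same mixed problem with data $f(t)-f(t')$ and $h(t)-h(t')$) shows $t\mapsto v(t)$ is continuous into $H^2(\Omega)$. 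Combining with the bound of Step 1 gives the required estimate and hence the continuity of $E$ between the stated spaces.

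\emph{Main obstacle.} The genuinely delicate point is the elliptic estimate of Step 2: $H^2$ regularity up to the interface $\overline{\Gamma_0}\cap\overline{\Gamma_1}$ is not automatic for mixed Dirichlet--Neumann conditions, so one must invoke the smoothness of $\Gamma$ and the structure of the partition $\Gamma=\overline{\Gamma_0}\cup\overline{\Gamma_1}$; likewise, the compatibility condition $\partial_n u^0|_{\Gamma_1}=h(0)$ is precisely what rules out a regularity loss at $t=0$ on $\Gamma_1$ and makes the smooth-data approximation in Step 1 consistent. Everything else — verifying that $w=v_t$ is the weak solution of the differentiated ibvp via the approximation argument, and the chaining of norms and the continuity in $t$ — is routine once these two ingredients are in hand.
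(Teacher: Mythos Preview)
Your proof is correct and follows essentially the same route as the paper: differentiate \eqref{vpart} in time, apply Lemma \ref{extlem} to the ibvp satisfied by $v_t$ (with data $(h_t,u^1,\Delta u^0)$), and then recover $H^2$ regularity of $v$ from the resulting elliptic mixed problem $\Delta v=v_{tt}$ with $v|_{\Gamma_0}=0$, $\partial_n v|_{\Gamma_1}=h$. Your version is in fact more careful than the paper's in two respects --- you make explicit the density/approximation needed to justify the differentiated problem and you flag the genuine subtlety of $H^2$ elliptic regularity for mixed Dirichlet--Neumann conditions near $\overline{\Gamma_0}\cap\overline{\Gamma_1}$, which the paper passes over by appealing to ``classical elliptic regularity results.''
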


\begin{proof}
	Let $v=E(h,u^0,u^1)$. Setting $z=v_t$ and taking the derivative of the main equation in \eqref{vpart} with respect to temporal variable, we find that $z$ solves the following ibvp:
	\begin{align}\label{vpartder}
		\left\{
		\begin{array}{ll}
			z_{tt} =\bigtriangleup z \quad  \text{in } \Omega\times (0,T),\\
			z(0)=u^{1},\  z_{t}(0)=\bigtriangleup u^{0}\quad \text{in } \Omega,\\
			z|_{\Gamma_{0}} =0,   \\
			{\frac{\partial z }{\partial n}|}_{\Gamma_{1}} = h_t.
		\end{array}
		\right.
	\end{align} Given $u^0\in H^2(\Omega)\cap H_{\Gamma_0}^1(\Omega)$, $u^1\in H_{\Gamma_0}^1(\Omega)$ and $h\in C^2([0,T];L^2(\Gamma_1))\cap C([0,T];H^{1/2}(\Gamma_1))$, we have $\bigtriangleup u^{0}\in L^2(\Omega)$ and $h_t\in C^1([0,T];L^2(\Gamma_1))$. By applying Lemma \ref{extlem}, we deduce $z\in C([0,T];H_{\Gamma_0}^1(\Omega))$ with $z_t\in C([0,T];L^2(\Omega))$. \eqref{vpart} leads to the following nonhomogeneous elliptic problem:
	\begin{align}\label{vpartnew}
		\left\{
		\begin{array}{ll}
			\bigtriangleup v = z_t\in C([0,T];L^2(\Omega)),\\
			v|_{\Gamma_{0}} =0,   \\
			{\frac{\partial v }{\partial n}|}_{\Gamma_{1}} = h\in C([0,T];H^{1/2}(\Gamma_1)).
		\end{array}
		\right.
	\end{align} By classical elliptic regularity results, we conclude
	$v\in C([0,T];H^2(\Omega))$.
\end{proof}

\begin{remark}\label{dynext}Dynamic extension method was formally introduced by the first author in the context of analyzing and stabilizing solutions to the defocusing nonlinear Schrödinger equation (NLS) subject to an inhomogeneous Dirichlet boundary condition (see \citealp{OThesis} and \citealp{OKL2011}). This technique was later applied to similar models, including focusing NLS, multiple nonlinearities, and inhomogeneous Neumann boundary conditions, see \cite{O2012}, \cite{O2013}, and \cite{O2015}.  
	
	The dynamic extension method is more efficient than arbitrary spatial extensions of boundary datum, as the latter approach introduces an interior source term during homogenization, which may harm the optimal regularity properties of the original wave model. For instance, assume $v$ is an arbitrary function defined on $\bar{\Omega}$ such that  $$v|_{\Gamma_{0}} =0, \ \ {\left.\frac{\partial v }{\partial n}\right|}_{\Gamma_{1}} = h.$$ By setting $w:=u-v$  we see that $w$ satisfies homogeneous boundary conditions, along with the following equations associated with the linear damped and viscoelastic wave equations, where the source terms $f_d$ and  $f_v$ are given by: 
	\begin{align}
		&w_{tt}-\bigtriangleup w + aw_t=f_d(x,t),
	\end{align}
	\begin{align}
		&w_{tt}-\bigtriangleup w + \int_{0}^{t}g(t-s)\bigtriangleup w(s)ds=f_v(x,t),
	\end{align}
	where
	\begin{align*}
		&f_d(x,t)=-v_{tt}+ \bigtriangleup v-a v_t,\\
		&f_v(x,t)=-v_{tt}+ \bigtriangleup v -\int_{0}^{t}g(t-s)\bigtriangleup v(s)ds.
	\end{align*} 
	The above source terms would not appear if the dynamic extension method were used.  Notice that when the boundary datum is rough, the arbitrary extension $v$ may lack smoothness, resulting in rough source terms $f_d$ and$ f_v$, which can harm the regularity of the original PDE model with inhomogeneous boundary conditions.  Additionally, using an arbitrary extension complicates the homogenized model, making it less amenable to multiplier techniques, more so in nonlinear problems.
\end{remark}

\section{Damped wave equation}
\subsection{Homogenization}
\label{linw_damped} Setting $w\equiv u-v=u-E(h,u_0,u_1)$, we observe that to solve the ibvp \eqref{damped}, \eqref{neumann-bc}-\eqref{dirichlet-bc}, \eqref{initial-cond}, it is enough to obtain a solution of the following initial-boundary value problem with zero initial data and homogeneous Dirichlet-Neumann boundary conditions:

\begin{equation}\label{w_part_damped}
	\left\{
	\begin{array}{l l}
		w_{tt}-\bigtriangleup w + a(x)(w_t+v_t)=0  & \textrm{in} \ \ \Omega \times (0,T), \\
		w(x,0)=0,  w_{t}(x,0)=0  &   \textrm{in} \ \ \Omega,  \\
		w|_{\Gamma_{0}} =0, \\
		{\frac{\partial w }{\partial n}}\Big|_{\Gamma_{1}} = 0.
	\end{array}
	\right. 
\end{equation}
We prove the following well-posedness result for the above problem.
\begin{lemma}\label{reg_damped_w} Let $(h,u_0,u_1)\in C^1([0,T];L^2(\Gamma_1))\times H_{\Gamma_0}^1(\Omega)\times L^{2}(\Omega)$, $v=E(h,u_0,u_1)$, and  $a\in L^{\infty}(\Omega)$, then \eqref{w_part_damped}  has a unique weak solution   $w\in C([0,T];H_{\Gamma_0}^1(\Omega))$ with $w_t\in C([0,T];L^2(\Omega)). $
\end{lemma}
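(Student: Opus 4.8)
The plan is to reformulate \eqref{w_part_damped} as an abstract Cauchy problem in the energy space $\mathcal{H}:=H_{\Gamma_0}^1(\Omega)\times L^2(\Omega)$ and invoke semigroup theory together with a variation-of-parameters (Duhamel) representation. First I would define the unbounded operator $\mathcal{A}:D(\mathcal{A})\subset \mathcal{H}\to\mathcal{H}$ by $\mathcal{A}(w,\tilde w)=(\tilde w,\Delta w-a(x)\tilde w)$ on the domain $D(\mathcal{A})=\{(w,\tilde w)\in \mathcal{H}: \tilde w\in H_{\Gamma_0}^1(\Omega),\ \Delta w\in L^2(\Omega),\ \partial w/\partial n|_{\Gamma_1}=0\}$, so that \eqref{w_part_damped} becomes $U_t=\mathcal{A}U+F(t)$, $U(0)=0$, with $U=(w,w_t)$ and forcing $F(t)=(0,-a(x)v_t(t))$. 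The bulk of the work is to show that $\mathcal{A}$ generates a strongly continuous semigroup $\{e^{t\mathcal{A}}\}_{t\ge0}$ on $\mathcal{H}$; I would do this via the Lumer--Phillips theorem, checking that $\mathcal{A}$ (possibly after the harmless shift $\mathcal{A}-\lambda I$ absorbing the dissipativity constant, since $a$ need only be in $L^\infty$) is dissipative and that $\lambda I-\mathcal{A}$ is surjective for some $\lambda>0$. Dissipativity is the computation $\mathrm{Re}(\mathcal{A}U,U)_{\mathcal{H}}=-\int_\Omega a(x)|\tilde w|^2\,dx\le0$ using the inner product $(U_1,U_2)_{\mathcal H}=(\nabla w_1,\nabla w_2)_\Omega+(\tilde w_1,\tilde w_2)_\Omega$, where the Neumann condition in $D(\mathcal A)$ makes the boundary term in Green's formula vanish; surjectivity (the range condition) reduces to solving the coercive elliptic problem $\lambda^2 w-\Delta w+\lambda a(x)w=\text{data}$ with mixed Dirichlet--Neumann conditions, which follows from Lax--Milgram on $H_{\Gamma_0}^1(\Omega)$.

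With the semigroup in hand, I would next verify that the forcing term $F(t)=(0,-a(x)v_t(t))$ lies in $C([0,T];\mathcal{H})$: this is immediate from Lemma \ref{extlem}, which gives $v_t\in C([0,T];L^2(\Omega))$, combined with $a\in L^\infty(\Omega)$. Then the mild solution is given by the Duhamel formula $U(t)=\int_0^t e^{(t-s)\mathcal{A}}F(s)\,ds$, and standard semigroup theory (e.g., Pazy) guarantees $U\in C([0,T];\mathcal{H})$, i.e., $w\in C([0,T];H_{\Gamma_0}^1(\Omega))$ and $w_t\in C([0,T];L^2(\Omega))$, which is exactly the claimed regularity. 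For uniqueness I would argue at the level of mild solutions: the difference of two solutions is a mild solution of the homogeneous problem with zero data, hence identically zero; equivalently one can run an energy estimate on the difference and close it with Gr\"onwall using the bound $|\int_\Omega a(x)w_t\cdot(\text{diff})\,dx|\le c\,\|a\|_\infty\,\varepsilon(t)$.

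One subtlety worth flagging is the precise sense in which a mild solution of the abstract problem is a weak solution of \eqref{w_part_damped}; I would include a short remark identifying the two, noting that the coupling term $a(x)v_t$ sits in the second (i.e., $L^2(\Omega)$) component so no loss of regularity occurs and the variational formulation tested against $H_{\Gamma_0}^1(\Omega)$ is recovered in the usual way, with $w_{tt}\in C([0,T];(H_{\Gamma_0}^1(\Omega))')$ obtained a posteriori from the equation. The main obstacle I anticipate is not conceptual but bookkeeping: one must be careful that the damping term $a(x)w_t$ is kept \emph{inside} the generator $\mathcal{A}$ (so that $\mathcal{A}$ is genuinely dissipative and the decay structure is visible) while the term $a(x)v_t$ is treated as an external, continuous-in-time forcing — and that the Neumann condition $\partial w/\partial n|_{\Gamma_1}=0$ is correctly encoded in $D(\mathcal{A})$ so that the boundary integral in Green's identity drops out; getting the mixed boundary conditions and the function space $H_{\Gamma_0}^1(\Omega)$ consistent throughout is where care is needed. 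An alternative, fully equivalent route would bypass semigroups entirely and use Faedo--Galerkin with a priori energy estimates plus compactness, mirroring Step 2 of the outline, but the semigroup argument is cleaner here since the problem is linear with a continuous-in-time source.
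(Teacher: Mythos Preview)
Your proposal is correct and matches the paper's approach essentially line for line: the paper also recasts \eqref{w_part_damped} as the abstract Cauchy problem $W_t+\mathcal{A}W=F$ on $H=H_{\Gamma_0}^1(\Omega)\times L^2(\Omega)$ with the damping $a(x)w_t$ kept inside the generator and $F=(0,-a(x)v_t)$ treated as an $L^1([0,T];H)$ forcing, then invokes the $C_0$-semigroup generated by $\mathcal{A}$ and Pazy's result to obtain the unique mild solution with the stated regularity. The only cosmetic difference is that the paper outsources the verification that $\mathcal{A}$ generates a semigroup to a reference, whereas you sketch the Lumer--Phillips argument explicitly (and correctly flag the shift needed when $a$ has no sign).
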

\begin{proof}
	To prove the well-posedness of \eqref{w_part_damped}, we employ the semigroup approach.  We first transform (\ref{w_part_damped}) into an abstract operator theoretic form by introducing the new unknown $\tilde{w}:=\frac{\partial }{\partial t}w$. This allows us to rewrite (\ref{w_part_damped}) as
	\begin{align}\label{system1}
		\left\{
		\begin{array}{ll}
			w_{t}-\tilde{w}=0,\\
			\tilde{w}_{t}+A w+a(x)w_t =-a(x)v_{t}\in L^2(\Omega)
		\end{array}
		\right.
	\end{align}
	where $A:=  -\Delta $ and
	$ D(A)= \{ u\in H^2(\Omega) :  u|_{\Gamma_{0}}=\frac{\partial u }{\partial n}|_{\Gamma_{1}}=0 \}.$ 
	Introducing the vector function $W:=(w,\tilde{w})^{T}$, the system  (\ref{system1})  can now be written as 
	\begin{align}
		\begin{split} \label{abstrac_form_w}
			&\frac{d}{dt}W+\mathcal{A}W=F,
		\end{split}
	\end{align}
	where
	$\mathcal{A}=
	\begin{bmatrix}
		0 & -I \\
		A & a(x) \\
	\end{bmatrix}
	$ and $F=(0,-a(x)v_{t})^{T},$ moreover
	$
	W|_{t=0}=W(0)=(0,0).
	$ Let  $H:=H_{\Gamma_0}^1(\Omega)\times L^2(\Omega). $ It is well known that $\mathcal{A}$ generates a $C_0$-semigroup $(\mathscr{S}(t))_{t\geq 0}$ on $H,$ 
	see for instance \cite{Tebeu1998} whose proof for the Dirichlet case can easily be adapted to our Dirichlet-Neumann case.  Moreover,  if $F \in L^1([0,T];H)$, then $W \in C([0,T];H_{\Gamma_0}^1(\Omega)) \times C^1([0,T];L^2(\Omega))$, see e.g., \cite[Cor 2.2]{pazy}.  Note that $a\in L^{\infty}(\Omega)$ and $ v_t \in C([0,T];L^2(\Omega))$, therefore  $a(x)v_t\in L^1([0,T];L^2(\Omega))$ so that $F\in L^1([0,T];H)$.  Thus, we conclude that (\ref{w_part_damped}) admits a unique mild solution.
	
\end{proof}

\begin{lemma}\label{improved_reg_w} Let $a\in L^{\infty}(\Omega)$ and $v=E(h,u_0,u_1)$, where $$(h,u_0,u_1)\in C^2([0,T];L^2(\Gamma_1))\cap C([0,T];H^{1/2}(\Gamma_1))\times H^2(\Omega)\cap H_{\Gamma_0}^1(\Omega)\times H_{\Gamma_0}^{1}(\Omega),$$  and $\displaystyle{\left.\frac{\partial u^0 }{\partial n}\right|}_{\Gamma_{1}} = h(0)$. Then, (\ref{w_part_damped}) admits a strong solution 
	$w\in C([0,T];H^2(\Omega)\cap H_{\Gamma_0}^1(\Omega))$ with $w_t\in C([0,T];H_{\Gamma_0}^1(\Omega))$.
\end{lemma}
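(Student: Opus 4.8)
### Proof proposal for Lemma \ref{improved_reg_w}

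The plan is to mirror the bootstrap argument used in Lemma \ref{extlemreg}, transferring the gain of one derivative from the dynamic extension $v$ to the homogenized unknown $w$. First I would differentiate the evolution equation in \eqref{w_part_damped} in time. Setting $\tilde w := w_t$, the pair $(\tilde w, \tilde w_t)$ formally solves
\begin{align*}
	\left\{
	\begin{array}{ll}
		\tilde w_{tt}-\bigtriangleup \tilde w + a(x)(\tilde w_t + v_{tt}) = 0 & \text{in } \Omega\times(0,T),\\
		\tilde w(0) = 0,\quad \tilde w_t(0) = 0 & \text{in }\Omega,\\
		\tilde w|_{\Gamma_0} = 0,\quad \dfrac{\partial \tilde w}{\partial n}\Big|_{\Gamma_1} = 0,
	\end{array}
	\right.
\end{align*}
where the vanishing initial data follow from differentiating the first equation at $t=0$ and using $w(0)=w_t(0)=0$ together with $w_{tt}(0) = \bigtriangleup w(0) - a(x)(w_t(0)+v_t(0)) = 0$ because $v_t(0)=u^1\in H^1_{\Gamma_0}(\Omega)$ does not enter — wait, more carefully: $w_{tt}(0)=-a(x)v_t(0)$, which need not vanish, so I must instead read off $\tilde w_t(0)=-a(x)u^1\in L^2(\Omega)$ rather than $0$. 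This is harmless: Lemma \ref{reg_damped_w} (or rather the semigroup statement inside it) applies to \emph{any} $H^1_{\Gamma_0}(\Omega)\times L^2(\Omega)$ initial data and $L^1$-in-time forcing, so I only need the forcing $-a(x)v_{tt}$ to lie in $L^1([0,T];L^2(\Omega))$. Under the stronger hypotheses, Lemma \ref{extlemreg} gives $v\in C([0,T];H^2\cap H^1_{\Gamma_0})$ with $v_t\in C([0,T];H^1_{\Gamma_0})$; to get $v_{tt}\in C([0,T];L^2)$ I would invoke the equation $v_{tt}=\bigtriangleup v\in C([0,T];L^2)$. Hence $a(x)v_{tt}\in L^1([0,T];L^2(\Omega))$ and the semigroup theory yields $\tilde w = w_t\in C([0,T];H^1_{\Gamma_0}(\Omega))$ with $\tilde w_t = w_{tt}\in C([0,T];L^2(\Omega))$.

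Having $w_t\in C([0,T];H^1_{\Gamma_0}(\Omega))$ in hand, the second step is an elliptic regularity bootstrap at each fixed time, exactly as in the passage from \eqref{vpartnew} to the conclusion of Lemma \ref{extlemreg}. Rewriting \eqref{w_part_damped} as
\begin{align*}
	\left\{
	\begin{array}{ll}
		\bigtriangleup w = w_{tt} + a(x)(w_t+v_t)\in C([0,T];L^2(\Omega)),\\
		w|_{\Gamma_0} = 0,\\
		\dfrac{\partial w}{\partial n}\Big|_{\Gamma_1} = 0,
	\end{array}
	\right.
\end{align*}
the right-hand side is continuous in $t$ with values in $L^2(\Omega)$: $w_{tt}$ by Step 1, $a(x)w_t$ since $a\in L^\infty$ and $w_t\in C([0,T];H^1)\hookrightarrow C([0,T];L^2)$, and $a(x)v_t$ since $v_t\in C([0,T];H^1_{\Gamma_0})$. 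Classical elliptic regularity for the mixed Dirichlet--Neumann problem on the smooth domain $\Omega$ (with the zero boundary data, so no compatibility subtlety beyond what is already built into $D(A)$) then gives $w(t)\in H^2(\Omega)\cap H^1_{\Gamma_0}(\Omega)$ with a bound continuous in $t$, i.e. $w\in C([0,T];H^2(\Omega)\cap H^1_{\Gamma_0}(\Omega))$. Combined with $w_t\in C([0,T];H^1_{\Gamma_0}(\Omega))$ from Step 1, this is precisely the asserted strong regularity.

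The one point requiring a little care — and the closest thing to an obstacle — is justifying the time-differentiation rigorously rather than formally: one should first obtain the $C_0$-semigroup solution for $(\tilde w,\tilde w_t)$ with the stated data and forcing and \emph{then} check, by uniqueness and an integration/difference-quotient argument, that this solution indeed equals $w_t$ where $w$ is the solution from Lemma \ref{reg_damped_w}. This is the standard argument that a mild solution of the $t$-differentiated problem coincides with the time derivative of the original mild solution when the forcing is $C^1$ (or at least $W^{1,1}$) in time and the original data lie in the right spaces; here $F(t)=-a(x)v_t(t)$ has $F_t(t)=-a(x)v_{tt}(t)\in C([0,T];L^2)$, so this applies. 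Everything else is a routine repetition of the bootstrap already carried out in Lemma \ref{extlemreg}, so I would keep the write-up brief, citing \cite{pazy} for the regularity of the inhomogeneous abstract Cauchy problem and classical elliptic theory for the spatial step.
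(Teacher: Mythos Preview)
Your proposal is correct and follows essentially the same route as the paper: differentiate \eqref{w_part_damped} in time to obtain a problem for $z=w_t$ with initial data $(0,-a(x)u^1)$ and forcing $-a(x)v_{tt}$, apply the semigroup argument of Lemma~\ref{reg_damped_w} to place $(w_t,w_{tt})$ in $C([0,T];H^1_{\Gamma_0})\times C([0,T];L^2)$, and then read off $w\in C([0,T];H^2\cap H^1_{\Gamma_0})$ from elliptic regularity for the mixed problem. Your self-correction on $\tilde w_t(0)=-a(x)u^1$ matches the paper exactly, and your extra remarks (deriving $v_{tt}=\bigtriangleup v\in C([0,T];L^2)$ from the equation, and the uniqueness check identifying the mild solution of the differentiated problem with $w_t$) only make the argument more careful than the paper's own write-up.
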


\begin{proof}
	By setting $z=w_t$ and taking the derivative of the main equation in (\ref{w_part_damped}) with respect to $t$, we find that $z$ solves
	\begin{align}\label{vpartder}
		\left\{
		\begin{array}{ll}
			z_{tt}-\bigtriangleup z+a(x)z_t=-a(x)v_{tt} \quad  \text{in } \Omega\times (0,T),\\
			z(0)=0,\  z_{t}(0)=-a(x)u^1 \quad \text{in } \Omega,\\
			z|_{\Gamma_{0}} =0,   \\
			{\frac{\partial z }{\partial n}|}_{\Gamma_{1}} = 0.
		\end{array}
		\right.
	\end{align} Since  $u^1\in H_{\Gamma_0}^1(\Omega)$ and  $v\in C([0,T];H^2(\Gamma_1)\cap H_{\Gamma_0}^1(\Omega))$ (see Lemma \ref{extlemreg}), we have $-a(x)v_{tt}\in L^1((0,T);L^2(\Omega))$ and $-a(x)u^1 \in L^2(\Omega)$. As in Lemma \ref{reg_damped_w}, we obtain $z\in C([0,T];H_{\Gamma_0}^1(\Omega))$ with $z_t\in C([0,T];L^2(\Omega))$. Using \eqref{w_part_damped} we can write the following  elliptic problem:
	\begin{align}\label{vpartnew}
		\left\{
		\begin{array}{ll}
			\bigtriangleup w = z_t+a(x)z+a(x)v_t \in L^2(\Omega),\\
			w|_{\Gamma_{0}} =0,   \\
			{\frac{\partial w }{\partial n}|}_{\Gamma_{1}} = 0.
		\end{array}
		\right.
	\end{align} By the classical regularity of the Dirichlet-Neumann problem for elliptic equations, we obtain 
	$w\in C([0,T];H^2(\Omega))$.
\end{proof}
\subsection{Existence and uniqueness}
We define the weak solution of the initial-boundary value problem for the linear damped wave equation as follows:
\begin{definition}
	We say $u=w+v$ is a weak solution of (\ref{damped}),(\ref{neumann-bc})-(\ref{dirichlet-bc}), (\ref{initial-cond})  if $w$ is a (weak) solution of (\ref{w_part_damped}) and $v$ is a (weak) solution of (\ref{vpart}).
\end{definition}
We deduce the following theorem from above.
\begin{theorem}
	Let $u^0\in H_{\Gamma_0}^1(\Omega)$, $u^1\in L^2(\Omega)$, and $h\in C^1([0,\infty);L^2(\Gamma_1))$, then the linear damped model given by (\ref{damped}),(\ref{neumann-bc})-(\ref{dirichlet-bc}), (\ref{initial-cond})     has a unique weak solution.
\end{theorem}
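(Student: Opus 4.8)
The plan is to assemble the theorem directly from the decomposition machinery already established in Section~2 and the first part of Section~3. The key observation is that the statement is essentially a \emph{reunification} of two pieces: the dynamic extension $v=E(h,u^0,u^1)$ and the homogenized remainder $w=u-v$. First I would apply Lemma~\ref{extlem} to the triple $(h,u^0,u^1)\in C^1([0,\infty);L^2(\Gamma_1))\times H_{\Gamma_0}^1(\Omega)\times L^2(\Omega)$; since the hypotheses match exactly the domain of $E$ in that lemma (restricted to any finite interval $[0,T]$), we obtain a unique $v$ with $(v,v_t)\in C([0,T];H_{\Gamma_0}^1(\Omega))\times C([0,T];L^2(\Omega))$ solving \eqref{vpart}. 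Then I would feed this $v$ into Lemma~\ref{reg_damped_w}: its hypotheses ($a\in L^\infty(\Omega)$, which follows from \eqref{a(x)}, and $v=E(h,u_0,u_1)$ with data in the stated class) are satisfied, so \eqref{w_part_damped} has a unique weak solution $w\in C([0,T];H_{\Gamma_0}^1(\Omega))$ with $w_t\in C([0,T];L^2(\Omega))$.

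Next I would invoke the definition of weak solution preceding the theorem: $u:=w+v$ is by definition a weak solution of \eqref{damped}, \eqref{neumann-bc}--\eqref{dirichlet-bc}, \eqref{initial-cond}. For existence this is immediate once $w$ and $v$ are in hand; one should also note that $u$ inherits the regularity $u\in C([0,T];H_{\Gamma_0}^1(\Omega))$, $u_t\in C([0,T];L^2(\Omega))$ and the correct initial data, since $v(0)=u^0$, $v_t(0)=u^1$ while $w(0)=w_t(0)=0$, and the Neumann/Dirichlet conditions add up correctly ($\partial_n v|_{\Gamma_1}=h$, $\partial_n w|_{\Gamma_1}=0$; $v|_{\Gamma_0}=w|_{\Gamma_0}=0$). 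For uniqueness I would argue in the reverse direction: given any weak solution $u$ in the defined sense, it must decompose as $w+v$ with $v$ the dynamic extension; since $v$ is unique by Lemma~\ref{extlem} and, once $v$ is fixed, $w=u-v$ is forced to solve \eqref{w_part_damped}, whose solution is unique by Lemma~\ref{reg_damped_w}, the solution $u$ is unique.

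One genuinely needs to handle the passage from finite time horizons $[0,T]$ to the half-line $[0,\infty)$, since $h$ is only assumed in $C^1([0,\infty);L^2(\Gamma_1))$ and the lemmas are stated on $[0,T]$. I would note that $T>0$ is arbitrary and that the solutions on nested intervals agree by the uniqueness just established, so they glue to a solution on $[0,\infty)$; equivalently one simply restates the theorem as holding on each $(0,T)$, consistent with the phrasing ``in an arbitrary time interval $(0,T)$'' from the introduction. This is the only step that requires a sentence of care rather than a direct citation.

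I do not expect a serious obstacle here: the theorem is a bookkeeping corollary of Lemmas~\ref{extlem} and~\ref{reg_damped_w} together with the weak-solution definition. If anything, the subtle point is purely definitional --- making sure that ``weak solution'' as defined via the decomposition genuinely captures the ibvp \eqref{damped}, \eqref{neumann-bc}--\eqref{dirichlet-bc}, \eqref{initial-cond}, i.e.\ that adding the two weak formulations reproduces the weak formulation of the original problem with the inhomogeneous Neumann datum appearing correctly as a boundary term. Since that consistency is built into the way the homogenized problem \eqref{w_part_damped} was derived (the term $a(x)v_t$ was moved to carry exactly the effect of the extension), the argument closes without additional estimates.
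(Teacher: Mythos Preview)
Your proposal is correct and matches the paper's approach exactly: the paper states the theorem immediately after the weak-solution definition with the single remark ``We deduce the following theorem from above,'' and your write-up simply spells out that deduction from Lemmas~\ref{extlem} and~\ref{reg_damped_w} together with the decomposition $u=w+v$. There is nothing to add or correct.
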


\subsection{Uniform Stabilization}
In this section, we impose assumption (\ref{a(x)}) on the damping coefficient. We prove the following theorem which provides a (temporal) upper bound for the energy of solutions from which one can also deduce uniform decay rate estimates under certain assumptions on the behavior of Neumann manipulation.
\begin{theorem}\label{TemamMainThm}Let  $(u^0,u^1)\in H_{\Gamma_0}^1(\Omega)\times L^2(\Omega)$, $h\in C^1([0,\infty);L^2(\Gamma_1))$, $\alpha\in (0,\frac{\epsilon_0}{2})$ with $\epsilon_0$ defined by \eqref{epszero}, $\displaystyle \delta\in (0,\frac{1}{2c_p})$, and	\begin{align*}
		{H}(t)=&\|u^0\|_{H_{\Gamma_0}^1(\Omega)}^{2}+\|u^1\|_{L^2(\Omega)}^{2}+\int_0^t e^{2\alpha s} \bigg( \|h(s) \|_{L^2(\Gamma_{1})}^2+\|h_t(s) \|_{L^2(\Gamma_{1})}^2 \bigg)ds\\
		&+e^{2\alpha t}\|h \|_{L^2(\Gamma_{1})}^2
		+ \|h(0) \|_{L^2(\Gamma_{1})}^2+\int_0^t \|h(s) \|_{L^2(\Gamma_{1})}^2 ds.
	\end{align*}Then, there exists a positive constant $c=c(\alpha, \delta, \Omega)\ge 0$ such that
	\begin{align}\label{energy_decay-damped_temam}
		\varepsilon_a(t)\leq ce^{-2\alpha t} e^{\delta_1t}{H}(t),\quad t\ge 0.
	\end{align}
	Moreover, the above estimate implies  uniform exponential decay of the energy if for some $\gamma>0$
	\begin{align}\label{h_temam}
		\|h(t) \|_{L^2(\Gamma_{1})}^2,  \|h_t(t) \|_{L^2(\Gamma_{1})}^2\quad =\quad \mathcal{O}\left(e^{-\gamma t}\right).
	\end{align}
\end{theorem}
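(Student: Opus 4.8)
The plan is to start from the energy identity \eqref{neumann-energydot-damped} and control the boundary term $\int_{\Gamma_1} h\,u_t\,d\Gamma$ by transferring the time derivative off of $u_t$. Since a weak solution in the energy space does not have a well-defined Dirichlet trace $u_t|_{\Gamma_1}$, I would first work with the strong solutions provided by Lemma~\ref{improved_reg_w} (so that $u_t\in C([0,T];H^1_{\Gamma_0}(\Omega))$ and the trace is genuinely in $L^2(\Gamma_1)$), derive the estimate there, and pass to the limit by density at the end using the continuity of the solution map and of $H(t)$ in the data. Working with the perturbed/exponentially-weighted energy $e^{2\alpha t}\varepsilon_a(t)$ is natural: differentiating gives $\frac{d}{dt}\big(e^{2\alpha t}\varepsilon_a\big)=e^{2\alpha t}\big(2\alpha\varepsilon_a-\int_\Omega a|u_t|^2+\int_{\Gamma_1}hu_t\big)$, and the term $-\int_\Omega a|u_t|^2\le -a_{\min}\|u_t\|^2$ is what will absorb the growth $2\alpha\varepsilon_a$ once $\alpha$ is small — this is where the smallness requirement $\alpha<\epsilon_0/2$ and the multiplier argument enter.

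The core is a standard multiplier estimate: integrate the equation against $u$ (or against a suitable $q\cdot\nabla u + \text{(lower order)}$ multiplier, though for pure Dirichlet/Neumann with positive distributed damping the multiplier $u$ should suffice) over $\Omega\times(0,t)$ to obtain an inequality of the form $\int_0^t e^{2\alpha s}\|\nabla u\|^2\,ds \le c\,\varepsilon_a(0)+c\int_0^t e^{2\alpha s}\|u_t\|^2\,ds + c\int_0^t e^{2\alpha s}\big(\text{boundary terms}\big)\,ds$, using Poincaré's inequality (constant $c_p$, which is why $\delta<\tfrac1{2c_p}$ appears — it governs a Young's-inequality split of a $\langle h,u\rangle_{\Gamma_1}$ term against $\|\nabla u\|^2$ via the trace inequality $\|u\|_{L^2(\Gamma_1)}\le c\|\nabla u\|$). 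Combining this with the weighted energy identity yields a differential inequality $\frac{d}{dt}\big(e^{2\alpha t}\varepsilon_a\big)\le \delta_1 e^{2\alpha t}\varepsilon_a + e^{2\alpha t}\,G(t)$ where $G$ collects the boundary contributions. The boundary contributions are handled by integration by parts in time: $\int_0^t e^{2\alpha s}\langle h,u_t\rangle\,ds = e^{2\alpha t}\langle h,u\rangle - \langle h(0),u^0\rangle - \int_0^t \langle (e^{2\alpha s}h)_s, u\rangle\,ds$, and then each occurrence of $u$ on $\Gamma_1$ is estimated by $\epsilon\|\nabla u\|^2 + c_\epsilon\|h\|_{L^2(\Gamma_1)}^2$; the $\|\nabla u\|^2$ pieces are absorbed and the remaining pieces are exactly the terms assembled in $H(t)$ (the $\int_0^t e^{2\alpha s}(\|h\|^2+\|h_t\|^2)$, the $e^{2\alpha t}\|h\|^2$, the $\|h(0)\|^2$, and the un-weighted $\int_0^t\|h\|^2$ coming from the $\int_\Omega a v_t u_t$-type cross terms generated by homogenization $u=w+v$). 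Applying Grönwall's inequality to $Y(t):=e^{2\alpha t}\varepsilon_a(t)$ gives $Y(t)\le e^{\delta_1 t}\big(Y(0)+\int_0^t e^{2\alpha s}G(s)\,ds\big)\le c\,e^{\delta_1 t}H(t)$, which rearranges to \eqref{energy_decay-damped_temam}. Finally, if $\|h(t)\|^2,\|h_t(t)\|^2=\mathcal{O}(e^{-\gamma t})$ with $\gamma$ chosen (or $\alpha$ shrunk) so that $2\alpha<\gamma$ and $\delta_1<2\alpha$, then every term in $e^{-2\alpha t}e^{\delta_1 t}H(t)$ is $\mathcal{O}(e^{-\beta t})$ for some $\beta>0$, giving uniform exponential decay; I would note that one may need to first pick $\alpha$ small enough (relative to $\gamma$ and to $\delta_1$, which itself depends on $\alpha,\delta$) for this to close, which is consistent with the theorem's quantifier order.

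\textbf{Main obstacle.} The delicate point is the boundary term $\int_{\Gamma_1}h\,u_t\,d\Gamma$: at the regularity of weak solutions the trace $u_t|_{\Gamma_1}$ is not controlled, so the whole estimate must be produced at the level of strong (or smooth) data and then extended by density — which requires checking that both sides of \eqref{energy_decay-damped_temam} are continuous with respect to the data in the topologies of Lemma~\ref{reg_damped_w}, and that the constant $c$ is uniform. A secondary technical nuisance is bookkeeping the homogenization: the genuine unknown is $w=u-v$ with the forcing $-a(x)v_t$, so the energy identity for $u$ picks up cross terms $\int_\Omega a(x)v_t u_t$ and $(v_t,\cdot)$-type terms that must be bounded by $\varepsilon\|u_t\|^2+c\,\|v_t\|^2$ and then by $H(t)$ via Lemma~\ref{extlem} (the bound $\|v\|_{C([0,T];H^1_{\Gamma_0})}+\|v_t\|_{C([0,T];L^2)}\le c(\|u^0\|_{H^1_{\Gamma_0}}+\|u^1\|_{L^2}+\|h\|_{C^1([0,T];L^2(\Gamma_1))})$), which is precisely why the un-weighted $\int_0^t\|h\|^2$ and $\|h(0)\|^2$ terms appear in $H(t)$ alongside the weighted ones.
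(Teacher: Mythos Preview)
Your proposal has the right skeleton (exponential weight, integration by parts in time to move the derivative off $u_t|_{\Gamma_1}$, Young--trace--Poincar\'e, Gr\"onwall), but the paper organizes the argument differently and more economically. Instead of treating the weighted energy identity and a separate multiplier--by--$u$ estimate and then combining them, the paper performs the substitution $v:=u_t+\epsilon u$ (the classical Temam trick, which is what the reference in the proof alludes to). This $v$ satisfies $v_t+(a(x)-\epsilon)v-\Delta u-\epsilon(a(x)-\epsilon)u=0$; testing with $v$ gives in one stroke a differential inequality for the single quantity $y(t)=\|v\|_{L^2}^2+\|\nabla u\|_{L^2}^2$, with $\frac12 y'+\frac{\epsilon}{2}y\le \int_{\Gamma_1}hv\,d\Gamma$. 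The boundary term $\int_{\Gamma_1}hv=\int_{\Gamma_1}h(u_t+\epsilon u)$ is then handled exactly as you suggest, by writing $\int_{\Gamma_1}hu_t=\frac{d}{dt}\int_{\Gamma_1}hu-\int_{\Gamma_1}h_tu$ and estimating traces of $u$ via $\|\nabla u\|$. After multiplying by $e^{2\alpha t}$ and integrating, the leftover $\int_0^t e^{2\alpha s}\|\nabla u\|^2$ piece (coming from the $-4\alpha\int_0^t e^{2\alpha s}\int_{\Gamma_1}hu$ term after integrating the total derivative by parts) is what triggers Gr\"onwall and produces the factor $e^{\delta_1 t}$ with $\delta_1=\tfrac{4\delta\alpha c_p}{1-2\delta c_p}$. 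Finally $y\ge (1-\rho)\varepsilon_a$ for suitable $\rho$ closes the argument. The Temam substitution buys you exactly the packaging of your two steps into one and avoids the endpoint terms $e^{2\alpha t}\int_\Omega u_tu$ that your separate multiplier route would generate and then have to reabsorb.

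One correction: the homogenization $u=w+v$ plays no role in the paper's proof of this theorem. The argument is carried out directly for $u$; the dynamic extension was used only to establish existence. In particular, the unweighted term $\int_0^t\|h(s)\|_{L^2(\Gamma_1)}^2ds$ and the term $\|h(0)\|_{L^2(\Gamma_1)}^2$ in $H(t)$ do not come from cross terms $\int_\Omega a v_t u_t$ as you speculate; they arise purely from the time integration by parts of $\int_0^t e^{2\alpha s}\frac{d}{ds}\int_{\Gamma_1}hu\,d\Gamma\,ds$ (the $\|h(0)\|^2$ from the boundary contribution at $s=0$, and the unweighted integral from a particular Young split of the $-4\alpha\int_0^t e^{2\alpha s}\int_{\Gamma_1}hu$ piece). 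So your second ``obstacle'' paragraph is chasing a ghost; the density argument you describe in the first obstacle, on the other hand, is the right way to justify the formal computations.
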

\begin{proof}The proof for the case $a$ is constant and $h$ is zero can be found in \citep{temam}. The proof presented here extends the arguments in \citep{temam} to incorporate both distributed damping and inhomogeneous external manipulation.
	Let $v=u_t+\epsilon u$ with $\epsilon\in (0, \epsilon_0]$ being fixed, where \begin{equation}\label{epszero}\displaystyle \epsilon_0=\min\left\{\frac{a_{\min}}{4},\frac{a_{\min}}{2a_{\max}^2c_p}\right\}\end{equation} 
	in which $c_p$ is the Poincaré constant satisfying $$\|u\|_{L^2(\Omega)}^2\leq c_p \|\nabla u\|_{L^2(\Omega)}^2.$$ Then, $v$ satisfies the following equation:
	\begin{align}\label{v-eqn_temam}
		v_t+(a(x)-\epsilon)v-\triangle u-\epsilon(a(x)-\epsilon)u=0.
	\end{align}
	Multiplying (\ref{v-eqn_temam}) by $v$ and  integrating on $\Omega$ we obtain
	\begin{align}\label{v_multip_temam}
			\begin{split}
		\frac{1}{2}\frac{d}{dt}\int_\Omega \Big( |v|^2 &+ |\nabla u|^2 \Big)dx+\int_\Omega (a(x)-\epsilon)|v|^2 dx +\epsilon\int_\Omega |\nabla u|^2 \\
		&-\int_\Omega \epsilon(a(x)-\epsilon) uv dx= \int_{\Gamma_1}hv d\Gamma.
		\end{split}
	\end{align}
	\quad\\
	Note that, in view of Cauchy-Schwarz, Poincaré, and Young's inequalities, and using $\displaystyle \epsilon \leq \frac{a_{\min}}{4}$, we have	
	\begin{align}\label{sum_temam}
		\begin{split}
			&\int_\Omega (a(x)-\epsilon)|v|^2 dx +\epsilon\int_\Omega |\nabla u|^2 -\int_\Omega \epsilon(a(x)-\epsilon) uv dx\\
			&\quad \geq(a_{\min}-\epsilon)\int_\Omega |v|^2 dx +\epsilon\int_\Omega |\nabla u|^2-\epsilon(a_{\max}-\epsilon)\int_\Omega uv dx\\
			&\quad \geq(a_{\min}-\epsilon) \|v\|_{L^2(\Omega)}^{2}  + \epsilon \| \nabla u  \|_{L^2(\Omega)}^{2} -\epsilon(a_{\max}-\epsilon)\|u\|_{L^2(\Omega)}\|v\|_{L^2(\Omega)} \\
			&\quad \geq(a_{\min}-\epsilon) \|v\|_{L^2(\Omega)}^{2}  + \epsilon \| \nabla u  \|_{L^2(\Omega)}^{2}-(\epsilon (a_{\max}-\epsilon) \sqrt{c_p})\|\nabla u\|_{L^2(\Omega)}\|v\|_{L^2(\Omega)}\\
			&\quad \geq\frac{3a_{\min}}{4} \|v\|_{L^2(\Omega)}^{2}  + \epsilon \| \nabla u  \|_{L^2(\Omega)}^{2}-(\epsilon (a_{\max}-\epsilon)  \sqrt{c_p})\|\nabla u\|_{L^2(\Omega)}\|v\|_{L^2(\Omega)}\\
			&\quad \geq\frac{3a_{\min}}{4} \|v\|_{L^2(\Omega)}^{2}  + \epsilon \| \nabla u  \|_{L^2(\Omega)}^{2}-(\epsilon a_{\max} \sqrt{c_p})\|\nabla u\|_{L^2(\Omega)}\|v\|_{L^2(\Omega)}  \\
			&\quad \geq  \Big( \frac{3a_{\min}}{4} -\delta (\epsilon a_{\max}  \sqrt{c_p})\Big) \|v\|_{L^2(\Omega)}^{2}  +\Big( \epsilon - \frac{1}{ 4\delta}(\epsilon a_{\max}  \sqrt{c_p})\Big)\| \nabla u  \|_{L^2(\Omega)}^{2}\\ 
			&\quad \geq \frac{a_{\min}}{2}\|v\|_{L^2(\Omega)}^{2}  + \frac{\epsilon}{2}\| \nabla u  \|_{L^2(\Omega)}^{2}\ \ \ \  (\delta=\frac{a_{\min}}{4\epsilon  a_{\max}\sqrt{c_p}},\ \ \ \epsilon \leq \frac{a_{\min}}{2a_{\max}^2c_p}) \\
			&\quad \geq \frac{\epsilon}{2} \Big(\|v\|_{L^2(\Omega)}^{2}+\| \nabla u  \|_{L^2(\Omega)}^{2}  \Big).
		\end{split}
	\end{align}
	
	\noindent 	Therefore (\ref{v_multip_temam}) and (\ref{sum_temam})  yield
	\begin{align}\label{v_trace}
		\begin{split}
			&\frac{1}{2}\frac{d}{dt}\int_\Omega \Big( |v|^2 + |\nabla u|^2 \Big)dx+\frac{\epsilon}{2} \Big(\|v\|_{L^2(\Omega)}^{2}+\| \nabla u  \|_{L^2(\Omega)}^{2}  \Big)
			\leq  \int_{\Gamma_1}hvd\Gamma.\\
		\end{split}
	\end{align}
	We estimate the term at the right hand side of \eqref{v_trace} by using Young's, trace and Poincaré's inequalities:
	\begin{align}\label{rewrited_term}
		\begin{split}
			\int_{\Gamma_1}hvd\Gamma &=\int_{\Gamma_1}h(u_t+\epsilon u)d\Gamma\\
			&=\frac{d}{dt}\int_{\Gamma_1}h ud\Gamma-\int_{\Gamma_1}h_t u d\Gamma + \epsilon\int_{\Gamma_1}h  u d\Gamma \\
			&\leq \frac{d}{dt}\int_{\Gamma_1}h ud\Gamma+\eta c_p(1+\epsilon)\|\nabla u \|_{L^2(\Omega)}^{2} +\frac{\epsilon}{4\eta}\|h \|_{L^2(\Gamma_{1})}^2+\frac{1}{4\eta}\|h_t \|_{L^2(\Gamma_{1})}^2.
		\end{split}
	\end{align}
	Next, we set $$y(t)=\int_\Omega \Big( |v|^2 + |\nabla u|^2 \Big)dx.$$ Therefore, \eqref{v_trace} becomes
	\begin{align}\label{yield_temam}
		\begin{split}
			\frac{1}{2}\frac{d}{dt}y(t)+\alpha y(t)\leq  \frac{d}{dt}\int_{\Gamma_1}h ud\Gamma +\frac{1}{4\eta}
			\bigg(\epsilon \|h \|_{L^2(\Gamma_{1})}^2+\|h_t \|_{L^2(\Gamma_{1})}^2 \bigg)
		\end{split}
	\end{align}
	where $\displaystyle \alpha:= \frac{\epsilon}{2}- \eta c_p(1+\epsilon)$ and $\displaystyle \eta\in (0,\frac{\epsilon}{2c_p(1+\epsilon)})$. Then, multiplying  (\ref{yield_temam}) by $e^{2\alpha t}$  and  integrating on $(0,t)$ we obtain
	\begin{align}\label{y_ineq}
		\begin{split}
		e^{2\alpha t}y(t)\leq y(0)&+2\int_0^t e^{2\alpha s} \frac{d}{ds}\int_{\Gamma_1}h ud\Gamma ds  \\
		&+\frac{1}{2\eta}\int_0^t e^{2\alpha s} \bigg(\epsilon \|h(s) \|_{L^2(\Gamma_{1})}^2+\|h_t(s) \|_{L^2(\Gamma_{1})}^2 \bigg)ds.
		\end{split}
	\end{align}
	Similarly, we can rewrite and estimate the second term at the right hand side of \eqref{y_ineq}  as follows
	\begin{align}
		\begin{split}
			&2\int_0^t e^{2\alpha s}\frac{d}{ds}\int_{\Gamma_1}h ud\Gamma ds \\
			&\quad =2\int_0^t \frac{d}{ds}\bigg(e^{2\alpha s}\int_{\Gamma_1}h ud\Gamma\bigg)ds-2\int_0^t \frac{d}{ds}\bigg(e^{2\alpha s}\bigg)\int_{\Gamma_1}h ud\Gamma ds \\
			&\quad= 2e^{2\alpha t}\int_{\Gamma_1}h ud\Gamma- 2\int_{\Gamma_1}h(0)u(0)d\Gamma-4\alpha\int_0^t e^{2\alpha s}\int_{\Gamma_1}h ud\Gamma ds \\
			&\quad \leq \frac{1}{2\delta}e^{2\alpha t}\|h \|_{L^2(\Gamma_{1})}^2+ 2\delta c_p e^{2\alpha t}\|\nabla u \|_{L^2(\Omega)}^{2}+c_p\|\nabla u^0\| _{L^2(\Omega)}^{2}+ \|h(0) \|_{L^2(\Gamma_{1})}^2\\
			&\quad \quad +\frac{\alpha}{\delta}\int_0^t \|h(s) \|_{L^2(\Gamma_{1})}^2 ds+ 4 \delta \alpha c_p \int_0^t e^{2\alpha s}\|\nabla u(s) \|_{L^2(\Omega)}^{2}ds.
		\end{split}
	\end{align}
	We find that
	\begin{align}\label{y_ineq_2}
		(1-2\delta c_p) e^{2\alpha t}y(t)\leq 4 \delta \alpha c_p \int_0^t e^{2\alpha s} y(s) ds+ \tilde{H}(t),
	\end{align}
	where $\delta\in (0,\frac{1}{2c_p})$ is arbitrary and 
	\begin{align*}
		\tilde{H}(t)&=y(0) +\frac{1}{2\eta}\int_0^t e^{2\alpha s} \bigg(\epsilon \|h(s) \|_{L^2(\Gamma_{1})}^2+\|h_t(s) \|_{L^2(\Gamma_{1})}^2 \bigg)ds
		+\frac{1}{2\delta}e^{2\alpha t}\|h \|_{L^2(\Gamma_{1})}^2\\
		&\quad+c_p\|\nabla u^0\| _{L^2(\Omega)}^{2}+ \|h(0) \|_{L^2(\Gamma_{1})}^2+\frac{\alpha}{\delta}\int_0^t \|h(s) \|_{L^2(\Gamma_{1})}^2 ds.
	\end{align*}
	
	\noindent Note that 
	\begin{align*}
		y(t)&=\|u_t+\epsilon u\|_{L^2(\Omega)}^2+\|\nabla u\|_{L^2(\Omega)}^2\\
		&=\|u_t\|_{L^2(\Omega)}^2+\epsilon^2\|u\|_{L^2(\Omega)}^2+2\epsilon\int_\Omega u_t udx+\|\nabla u\|_{L^2(\Omega)}^2\\
		&\ge (1-\rho)\|u_t\|_{L^2(\Omega)}^2+\epsilon^2(1-\frac{1}{\rho})\|u\|_{L^2(\Omega)}^2+\|\nabla u\|_{L^2(\Omega)}^2\\
		&\ge  (1-\rho)\|u_t\|_{L^2(\Omega)}^2+\left(\epsilon^2(1-\frac{1}{\rho})+\frac{1}{c_p}\right)\|u\|_{L^2(\Omega)}^2\\
		&\ge (1-\rho)\|u_t\|_{L^2(\Omega)}^2
	\end{align*} where $\displaystyle \rho\in (\frac{\epsilon^2c_p}{1+\epsilon^2c_p},1)$ is arbitrary. Considering definition of $y(t)$, we also have $$y(t)\ge \|\nabla u\|_{L^2(\Omega)}^2\ge (1-\rho)\|\nabla u\|_{L^2(\Omega)}^2.$$ Therefore, we deduce that $y(t)\ge (1-\rho)\varepsilon_a(t)$.  Thus, Gronwall's inequality applied to \eqref{y_ineq_2} yields
	\begin{align}
		\frac{1}{1-\rho}\varepsilon_a(t)\leq y(t)\leq \frac{1}{1-2\delta c_p}e^{-2\alpha t} e^{\delta_1t}\tilde{H}(t)
	\end{align}
	where $\displaystyle \delta_1:= \frac{4 \delta \alpha c_p}{1-2\delta c_p}.$ The last statement of the lemma follows from L'Hospital's rule.
\end{proof}

\section{Viscoelastic wave equation}
\subsection{Homogenization}
In this section, we study the initial-boundary value problem for the linear viscoelastic wave equation given by \eqref{lineq}, \eqref{neumann-bc}-\eqref{dirichlet-bc}, and \eqref{initial-cond}.
\label{linw} Setting $w\equiv u-v=u-E(h,u_0,u_1)$, we observe that solving \eqref{lineq}, \eqref{neumann-bc}-\eqref{dirichlet-bc}, \eqref{initial-cond} reduces to finding a solution to the following initial boundary value problem with zero initial data and homogeneous Dirichlet-Neumann boundary conditions:
\begin{align}\label{wpart}
	\left\{
	\begin{array}{ll}
		w_{tt}-\bigtriangleup w &+ \int_{0}^{t}g(t-s)\bigtriangleup (w+v)(s)ds=0\quad  \text{in } \Omega\times (0,T),  \\
		w(x,0)&=0,\  w_{t}(x,0)=0\quad \text{in } \Omega,   \\
		w|_{\Gamma_{0}} &=0,   \\
		{\frac{\partial w   }{\partial n}}\Big|_{\Gamma_{1}} &= 0.
	\end{array}
	\right.
\end{align}
We define weak solutions for \eqref{wpart} as follows:

\begin{definition}We say a function $w\in L^\infty(0,T;H_{\Gamma_0}^1(\Omega))$ with $w_t\in L^\infty(0,T;L^2(\Omega))$ and $w_{tt}\in L^\infty(0,T;\left(H_{\Gamma_0}^1(\Omega)\right)')$ is called a weak solution of \eqref{wpart} provided
	\begin{align}\label{ap1new2}
		\begin{split}
			< w_{tt},\phi>& +(\nabla w, \nabla \phi)_{\Omega}-\int_0^t g(t-s)(\nabla w, \nabla \phi)_{\Omega}ds\\
			&=\int_0^t g(t-s)(\nabla v, \nabla \phi)_{\Omega}ds-\int_0^t g(t-s)\langle h, \phi\rangle_{\Gamma_1}ds
		\end{split}
	\end{align}for all $\phi\in H_{\Gamma_0}^1(\Omega)$ and $(w(0),w_t(0))\equiv (0,0)$.
\end{definition}

\subsubsection*{Approximate Solutions}
We begin by constructing approximate solutions of \eqref{wpart} using the Faedo–Galerkin method. To this end, let $\{e_k\}$ be a complete orthonormal basis in $L^2(\Omega)$ of eigenfunctions of Laplacian with zero mixed boundary conditions:
\begin{align}\label{basis-eq}
	-\Delta e_k &= \lambda_ke_k\quad \text{ in }\Omega, \\
	e_k&=0\quad \text{ on }\Gamma_0,\\
	{\frac{\partial e_k   }{\partial n}}&=0\quad \text{ on }\Gamma_1.
\end{align}
We now consider the following initial value problem for the system of ordinary differential equations, with unknowns $w_{N,k}$:
\begin{align}\label{ap1}
	\begin{split}
		(w_{Ntt},e_k)_{\Omega}-(\Delta w_N, e_k)_{\Omega}+\int_0^t g(t-s)(\Delta w_N, e_k)_{\Omega}ds=f_{N,k}, k=1,...,N,
	\end{split}
\end{align}

\begin{align}\label{ap2}
	w_{N,k}(0)=w'_{N,k}(0)=0,
\end{align} where
$w_{N}(t)\equiv \sum_{k=1}^N w_{N,k}(t)e_k$ and
\begin{align}
	\begin{split}
		f_{N,k}(t)&=-\int_0^t g(t-s)(\Delta v_N, e_k)_{\Omega}ds\\
		&=\int_0^t g(t-s)(\nabla v_N, \nabla e_k)_{\Omega}ds-\int_0^t g(t-s)\langle h_N, e_k\rangle_{\Gamma_1}ds.
	\end{split}
\end{align}
In this formulation, $v_N$ denotes a smooth approximation of $v$, constructed by applying the strong regularity result from Lemma \ref{extlemreg}. More precisely, we replace the boundary datum $h\in C^1([0,T];L^2(\Gamma_1))$ by $h_N\in C^2([0,T];L^2(\Gamma_1))\cap C([0,T];H^{1/2}(\Gamma_1))$ and the initial data
$(u^0,u^1)\in H_{\Gamma_0}^1(\Omega)\times L^2(\Omega)$ with $(u_{0N},u_{1N})\in H^2(\Omega)\cap H_{\Gamma_0}^1(\Omega)\times H_{\Gamma_0}^1(\Omega)$ satisfying the additional compatibility condition $\displaystyle{\left.\frac{\partial u_{N0} }{\partial n}\right|}_{\Gamma_{1}} = h_N(0)$. These, approximations are constructed such that $h_N\rightarrow h$ in $C^1([0,T];L^2(\Gamma_1))$ and $(u_{0N},u_{1N})\rightarrow (u^0,u^1)$ in $H_{\Gamma_0}^1(\Omega)\times L^2(\Omega)$.   Then, we solve \eqref{vpart} with data $(h_N,u_{N0},u_{N1})$, obtaining $v_N\in C([0,T];H^2(\Omega))$ with $v_{Nt}\in C([0,T];H_{\Gamma_0}^1(\Omega))$.\\
\eqref{ap1}-\eqref{ap2} can also be rewritten using the properties of $e_k$ as:
\begin{align}\label{apode}
	\begin{split}
		&w''_{N,k}+\lambda_k w_{N,k}-\lambda_k \int_0^t g(t-s)w_{N,k}ds= f_{N,k},\\
		&w_{N,k}=w'_{N,k}=0, k=1,...,N.
	\end{split}
\end{align}

By the theory of systems of ordinary differential equations, there is a (continuous in time) solution $w_N$, defined on an interval $[0,t_N)$, for each $N$.   These will be referred to as the approximate solutions.

\subsubsection*{A priori estimates}
Here, we derive a priori estimates for $w_N$ that are uniform with respect to $N$ and $t$. Multiplying \eqref{ap1} by $\partial_tw_{N,k}$ and summing over $k$ from $1$ to $N$, we obtain
\begin{align}\label{bypartwN}
	\begin{split}
		&\frac{1}{2}\frac{d}{dt}\int_{\Omega} |w_{Nt}(t)|^{2} dx + \frac{1}{2} \frac{d}{dt}\int_{\Omega}|\nabla w_N(t)|^2dx\\
		&-\int_{0}^{t}g(t-s)\int_{\Omega}\nabla w_N(s)\nabla w_{Nt}(t)dxds-\int_{0}^{t}g(t-s)\int_{\Omega}\nabla v_N(s)\nabla w_{Nt}(t)dxds\\
		&+\int_{0}^{t}g(t-s)\int_{\Gamma_1}h_N(s)w_{Nt}(t)d\Gamma ds =0.
	\end{split}
\end{align}

\noindent We rewrite third, fourth and fifth terms of (\ref{bypartwN}) as follows:

\begin{align}\label{est1wN}
	\begin{split}
		&\int_{0}^{t}g(t-s)\int_{\Omega}\nabla w_N(s)\nabla w_{Nt}(t)dxds\\
		=&-\frac{1}{2}\frac{d}{dt}\int_{0}^{t} g(t-s)\int_{\Omega}(\nabla w_N(s)-\nabla w_N(t))^2dx ds\\
		&+\frac{1}{2}\int_{0}^{t}g'(t-s)\int_{\Omega}(\nabla w_N(s)-\nabla w_N(t))^2dxds\\
		&+\frac{1}{2}\frac{d}{dt}\int_{0}^{t}g(s)\int_{\Omega}|\nabla w_N(t)|^2dxds-\frac{1}{2}g(t)\int_{\Omega}|\nabla w_N(t)|^2dx,
	\end{split}
\end{align}

\begin{align}\label{est3wN}
	\begin{split}
		&\int_{0}^{t}g(t-s)\int_{\Omega}\nabla v_N(s)\nabla w_{Nt}(t)dxds\\
		=&-\frac{1}{2}\frac{d}{dt}\int_{0}^{t} g(t-s)\int_{\Omega}(\nabla v_N(s)-\nabla w_N(t))^2dx ds\\
		&-g(0)\int_{\Omega}(\nabla v_N(t)-\nabla w_N(t))^2dx\\
		&+\frac{1}{2}\int_{0}^{t}g'(t-s)\int_{\Omega}(\nabla v_N(s)-\nabla w_N(t))^2dxds\\
		&+\frac{1}{2}\frac{d}{dt}\int_{0}^{t}g(s)\int_{\Omega}|\nabla w_N(t)|^2dxds-\frac{1}{2}g(t)\int_{\Omega}|\nabla w_N(t)|^2dx,
	\end{split}
\end{align}

\begin{align}\label{est2wN}
	\begin{split}
		\int_{0}^{t}&g(t-s)\int_{\Gamma_1}h_N(s)w_{Nt}(t)d\Gamma ds=\frac{d}{dt}\int_{0}^{t}g(t-s)\int_{\Gamma_1} h_N(s)w_N(t)d\Gamma ds\\
		&-g(0)\int_{\Gamma_1} h_N(t)w_N(t)d\Gamma
		-\int_{0}^{t} g'(t-s)\int_{\Gamma_1} h_N(s)w_N(t)d\Gamma ds.
	\end{split}
\end{align}

Using identities (\ref{est1wN})- (\ref{est2wN}) in (\ref{bypartwN}) and integrating over $(0,t)$, we obtain

\begin{align}
	\begin{split}
		&\frac{1}{2}\int_{\Omega} |w_{Nt}(t)|^{2} dx + \frac{1}{2}\bigg(1-2\int_{0}^{t}g(s)ds \bigg)\int_{\Omega}|\nabla w_N(t)|^2dx\\
		&+\int_{0}^{t}g(t-s)\int_{\Gamma_1}w_N(t)h_N(s)d\Gamma ds-\int_{0}^{t}\int_{0}^{s}g'(s-\tau)\int_{\Gamma_1}w_N(s)h_N(\tau)d\Gamma d\tau ds\\
		&-g(0)\int_{0}^{t}\int_{\Gamma_1}w_N(s)h_N(s)d\Gamma ds
		+\frac{1}{2} \bigg((g\circ \nabla w_N)(t)\\
		&+\frac{1}{2}\int_{0}^{t} g(t-s)\int_{\Omega}(\nabla v_N(s)-\nabla w_N(t))^2dx ds\\
		&- \frac{1}{2}\int_{0}^{t}\int_{0}^{s}g'(s-\tau) \int_{\Omega}|\nabla v_N(\tau)-\nabla w_N(s)|^2 dx d\tau ds\\
		&+\int_{0}^{t}g(s)\int_\Omega|\nabla w_N(s)|^2dxds+ g(0)\int_{0}^{t}\int_{\Omega}|\nabla v_N(s)-\nabla w_N(s)|^2 dxds\\
		& -\frac{1}{2}\int_{0}^{t}(g'\circ \nabla w_N)(s)ds \bigg)=0.
	\end{split}
\end{align}

Note that some of the terms at the left hand side of above identity are nonnegative. Dropping those terms, we get the estimate 
\begin{align}\label{becomeswN}
	\begin{split}
		\frac{1}{2}\int_{\Omega}& |w_{Nt}(t)|^{2} dx + \frac{1}{2}\bigg(1-2\int_{0}^{t}g(s)ds \bigg)\int_{\Omega}|\nabla w_N(t)|^2dx\\
		 \leq & -\int_{0}^{t}g(t-s)\int_{\Gamma_1}w_N(t)h_N(s)d\Gamma ds\\
		&+\int_{0}^{t}\int_{0}^{s}g'(s-\tau)\int_{\Gamma_1}w_N(s)h_N(\tau)d\Gamma d\tau ds+g(0)\int_{0}^{t}\int_{\Gamma_1}w_N(s)h_N(s)d\Gamma ds.
	\end{split}
\end{align}
Then, applying $\epsilon-$Young's, trace and Poincaré inequalities together with the fact that $g'$ has a negative sign, (\ref{becomeswN}) becomes
\begin{align}\label{gwN}
	\begin{split}
		\frac{1}{2}&\|w_{Nt}(t)\|_{L^2(\Omega)}^{2} + \frac{1}{2}\bigg(1-2\int_{0}^{t}g(s)ds-2\varepsilon c\int_{0}^{t}g(s)ds\bigg)\|\nabla w_N(t)\|_{L^2(\Omega)}^2   \\
		&\leq \frac{1}{4\varepsilon} \int_{0}^{t}g(t-s) \Big \| h_N(s)\Big\|_{L^2(\Gamma_{1})}^2 ds-\frac{1}{4\varepsilon}\int_{0}^{t}\int_{0}^{s}g'(s-\tau) \| h_N(\tau)\|_{L^2(\Gamma_{1})}^2d\tau ds \\&+ \frac{1}{4\varepsilon}g(0) \int_{0}^{t}\| h_N(s)\|_{L^2(\Gamma_{1})}^2 ds+\varepsilon c\int_{0}^{t}g(s)\|\nabla w_N(s)\|_{L^2(\Omega)}^2 ds\\
		&+ \varepsilon cg(0) \int_{0}^{t}\| \nabla w_N(s)\|_{L^2(\Omega)}^2 ds.
	\end{split}
\end{align}

\noindent We set $\mathcal{E}_{N}(t)\equiv \|w_{Nt}(t)\|_{L^2(\Omega)}^{2} + \|\nabla w_N(t)\|_{L^2(\Omega)}^2$ and
\begin{align*}
	H_N(t)\equiv & \frac{1}{4\varepsilon} \int_{0}^{t}g(t-s) \Big \| h_N(s)\Big\|_{L^2(\Gamma_{1})}^2 ds-\frac{1}{4\varepsilon}\int_{0}^{t}\int_{0}^{s}g'(s-\tau) \| h_N(\tau)\|_{L^2(\Gamma_{1})}^2d\tau ds\\
	+ &  \frac{1}{4\varepsilon}g(0) \int_{0}^{t}\| h_N(s)\|_{L^2(\Gamma_{1})}^2 ds.
\end{align*}
Now, taking $\varepsilon>0$ small enough and using $g(s)<g(0)$, we get the estimate

$$\mathcal{E}_N(t)\le cH_N(t)+c\int_{0}^{t}\| \nabla w_N(s)\|_{L^2(\Omega)}^2 ds\le cH_N(t)+c\int_{0}^{t}\mathcal{E}_N(t) ds,$$ where $c$ is a positive constant that depends on $\varepsilon, g(0),$ and  $\Omega$. Then, by using Gronwall's inequality we obtain a uniform in $t$ estimate over the interval $[0,t_N)$ given by
\begin{equation}\label{aprioriest}
	\mathcal{E}_N(t)\le cH_N(t_N)\exp(ct_N)\text{ for } t\in [0,t_N).
\end{equation}

\subsubsection*{Weak solution}
The first consequence of the estimate \eqref{aprioriest} is that we can extend $w_N$ globally in time for each $N$. Moreover, the sequence $\{w_N\}$ is bounded in $L^\infty(0,T;H_{\Gamma_0}^1(\Omega))$ and $\{w_{Nt}\}$ is bounded in $L^\infty(0,T;L_2(\Omega))$ for any $T>0$. Next, using \eqref{ap1}, \eqref{aprioriest} and the strong convergence properties of $v_N$ and $h_N$, we get
\begin{align*}
	&\|w_{Ntt}(t)\|_{\left(H_{\Gamma_0}^1(\Omega)\right)'}\equiv \sup_{\phi\neq 0}\frac{|(w_{Ntt},\phi)_{\Omega}|}{\|\phi\|_{H^1_{\Gamma_0}(\Omega)}^2}\\
	&\le \|w_N\|_{H_{\Gamma_0}^1(\Omega)} + \int_0^t g(t-s)\Big(\| w_N\|_{H_{\Gamma_0}^1(\Omega)}+\| v_N\|_{H_{\Gamma_0}^1(\Omega)}  +  \|h_N\|_{L^2(\Gamma_1)}\Big)ds\le c_T
\end{align*} for $t\in [0,T]$. Hence, $\{w_{Ntt}\}$ is a bounded sequence in $L^\infty(0,T;\left(H_{\Gamma_0}^1(\Omega)\right)').$
It follows from above boundedness arguments there exists a subsequence of $\{w_N\}$ (still denoted by $\{w_N\}$) and there exists $w\in L^\infty([0,T];H_{\Gamma_0}^1(\Omega))$ with derivatives $w_t\in L^\infty([0,T];L_2(\Omega))$ and $w_{tt}\in L^\infty([0,T];(H_{\Gamma_0}^1(\Omega))')$ such that
\begin{equation*}
	\left\{
	\begin{array}{lll}
		w_N \longrightarrow w & \mbox{weakly-* in}  & L^\infty([0,T];H_{\Gamma_0}^1(\Omega)),\\
		w_{Nt}\longrightarrow w_t &  \mbox{weakly-* in} &  L^\infty([0,T];L^2(\Omega)),\\
		w_{Ntt}\longrightarrow w_{tt} &   \mbox{weakly-* in} & L^\infty([0,T];(H_{\Gamma_0}^1(\Omega))').
	\end{array}\right.
\end{equation*}

\noindent From \eqref{ap1} and aforementioned convergence properties, we derive
\begin{align}\label{ap1new}
	\begin{split}
		&\int_{0}^{T}\left(<w_{tt},\phi>+(\nabla w, \nabla \phi)_{\Omega}-\int_0^t g(t-s)(\nabla w, \nabla \phi)_{\Omega}ds\right)dt\\
		&=\int_{0}^{T}\left(\int_0^t g(t-s)(\nabla v, \nabla \phi)_{\Omega}ds-\int_0^t g(t-s)\langle h, \phi\rangle_{\Gamma_1}ds\right)dt
	\end{split}
\end{align} for all $\phi\in L^2(0,T;H_{\Gamma_0}^1(\Omega))$. In particular, the variational formulation \eqref{ap1new2} in the definition of the weak solution for \eqref{wpart} holds true. From \eqref{ap1new}, we also have

\begin{align}\label{ap1newa}
	\begin{split}
		&\int_{0}^{T}\left(-<w_{t},\phi_t>+(\nabla w, \nabla \phi)_{\Omega}-\int_0^t g(t-s)(\nabla w, \nabla \phi)_{\Omega}ds\right)dt\\
		&=\int_{0}^{T}\left(\int_0^t g(t-s)(\nabla v, \nabla \phi)_{\Omega}ds-\int_0^t g(t-s)\langle h, \phi\rangle_{\Gamma_1}ds\right)dt+(w_t(0),\phi(0))
	\end{split}
\end{align} for $\phi\in C^1([0,T];H_{\Gamma_0}^1(\Omega))$ with $\phi(T)=0.$ On the other hand, from \eqref{ap1} and $w_{Nt}(0)\equiv 0$, we obtain
\begin{align}\label{ap1-intt}
	\begin{split}
		\int_0^T\left(-(w_{Nt},\phi_t)_{\Omega}+(\nabla w_N, \nabla \phi)_{\Omega}-\int_0^t g(t-s)(\nabla w_N, \nabla\phi)_{\Omega}ds\right)dt\\
		=\int_{0}^{T}\left(\int_0^t g(t-s)(\nabla v_N, \phi)_{\Omega}ds-\int_0^t g(t-s)\langle h_N, \phi\rangle_{\Gamma_1}ds\right).
	\end{split}
\end{align} Passing to limit in $N$, we get
\begin{align}\label{ap1-intt2}
	\begin{split}
		\int_0^T\left(-(w_{t},\phi_t)_{\Omega}+(\nabla w, \nabla\phi)_{\Omega}-\int_0^t g(t-s)(\nabla w, \nabla\phi)_{\Omega}ds\right)dt\\
		=\int_{0}^{T}\left(\int_0^t g(t-s)(\nabla v, \phi)_{\Omega}ds-\int_0^t g(t-s)\langle h, \phi\rangle_{\Gamma_1}ds\right).
	\end{split}
\end{align} Comparing \eqref{ap1-intt2} and \eqref{ap1newa} and the fact that $\phi$ is arbitrary, we conclude that $w_t(0)\equiv 0.$ A similar argument, after integrating by parts in $t$ again, shows that $w(0)\equiv 0.$ Hence, we just proved that $(w(0),w_t(0))\equiv (0,0)$, ensuring that the initial conditions are satisfied. We just established the existence of a weak solution for $w$-problem. The uniqueness of weak solution for $w$-problem is easy to see.  Indeed, it is enough to show that the only weak solution of $w$ problem with $v\equiv 0$ and $h\equiv 0$ is the zero solution. But when  $v\equiv 0$ and $h\equiv 0$, one has the estimate
$$\|w_{t}(t)\|_{L^2(\Omega)}^{2} + \|\nabla w(t)\|_{L^2(\Omega)}^2\le 0,$$ which implies $w\equiv 0$ as the initial data are zero.
\subsection{Existence and uniqueness}
We define the weak solution of the initial-boundary value problem for the linear viscoelastic wave equation given by \eqref{lineq}, \eqref{neumann-bc}-\eqref{dirichlet-bc}, and \eqref{initial-cond} as follows:
\begin{definition}
	We say $u=w+v$ is called a weak solution of \eqref{lineq}, \eqref{neumann-bc}-\eqref{dirichlet-bc}, \eqref{initial-cond}  if $w$ is a (weak) solution of (\ref{wpart}) and $v$ is a solution of (\ref{vpart}).
\end{definition}
The results of Section \ref{linv} and Section \ref{linw} yield the following theorem.
\begin{theorem}[Existence and uniqueness for the linear model]
	Let $u^0\in H_{\Gamma_0}^1(\Omega)$, $u^1\in L^2(\Omega)$, and $h\in C^1([0,\infty);L^2(\Gamma_1))$, then the linear viscoelastic model given by \eqref{lineq}, \eqref{neumann-bc}-\eqref{dirichlet-bc}, \eqref{initial-cond} has a unique weak solution.
\end{theorem}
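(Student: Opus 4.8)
The plan is to read this theorem as a direct assembly of the two constructions carried out in Sections~\ref{linv} and~\ref{linw}, since by definition a weak solution is $u=w+v$ with $v$ the dynamic extension solving \eqref{vpart} and $w$ a weak solution of the homogenized problem \eqref{wpart}. For existence, I would first fix an arbitrary $T>0$, apply Lemma~\ref{extlem} to the data $(h,u^0,u^1)$ to obtain $v:=E(h,u^0,u^1)$, the unique weak solution of \eqref{vpart}, with $v\in C([0,T];H_{\Gamma_0}^1(\Omega))$ and $v_t\in C([0,T];L^2(\Omega))$. This is precisely the regularity consumed by the Faedo--Galerkin scheme of Section~\ref{linw}: feeding $v$ together with the smoother approximations $v_N$, $h_N$ from Lemma~\ref{extlemreg} into the right-hand side of \eqref{ap1}, the uniform a priori estimate \eqref{aprioriest} and the subsequent compactness argument produce a weak solution $w$ of \eqref{wpart} on $[0,T]$ with $w\in L^\infty(0,T;H_{\Gamma_0}^1(\Omega))$, $w_t\in L^\infty(0,T;L^2(\Omega))$, $w_{tt}\in L^\infty(0,T;(H_{\Gamma_0}^1(\Omega))')$ and $(w(0),w_t(0))=(0,0)$. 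Then $u:=w+v$ is a weak solution of \eqref{lineq}, \eqref{neumann-bc}--\eqref{dirichlet-bc}, \eqref{initial-cond} on $[0,T]$.

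Since $h\in C^1([0,\infty);L^2(\Gamma_1))$, the construction runs on $[0,T]$ for every $T>0$, and the resulting finite-time solutions are consistent: the restriction to $[0,T_1]$ of a solution built on an interval $[0,T_2]\supset[0,T_1]$ is again a weak solution on $[0,T_1]$, hence equals the one built there by the uniqueness established below. Patching these gives a single weak solution on $[0,\infty)$.

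For uniqueness, I would argue that if $u^{(1)}=w^{(1)}+v^{(1)}$ and $u^{(2)}=w^{(2)}+v^{(2)}$ are two weak solutions, then each $v^{(i)}$ solves \eqref{vpart} with the same data, so $v^{(1)}=v^{(2)}=:v$ by uniqueness for that problem; hence $w^{(1)}$ and $w^{(2)}$ are weak solutions of \eqref{wpart} for the same $v$ and $h$, and their difference solves \eqref{wpart} with $v\equiv 0$, $h\equiv 0$ and zero initial data. The zero-data energy estimate recorded at the end of Section~\ref{linw} then forces this difference to vanish, so $u^{(1)}=u^{(2)}$.

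I do not anticipate a genuine obstacle here: the substantive work — the a priori bound \eqref{aprioriest}, the passage to the limit, and the zero-data estimate used for uniqueness — has already been carried out in Section~\ref{linw}, and existence of $v$ is cited from Lemma~\ref{extlem}. The one point worth checking carefully is the match of function spaces, namely that the regularity in which Lemma~\ref{extlem} delivers $v$ is exactly what the Galerkin estimates require (it is, since $v$ enters only through the term $\int_0^t g(t-s)(\nabla v,\nabla\phi)_\Omega\,ds$), so that no strengthening of the hypotheses on $(u^0,u^1,h)$ is needed.
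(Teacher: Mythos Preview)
Your proposal is correct and follows precisely the paper's approach: the theorem is stated as an immediate consequence of the results of Sections~\ref{linv} and~\ref{linw}, and your write-up simply spells out in detail how the dynamic extension $v$ from Lemma~\ref{extlem}, the Faedo--Galerkin construction of $w$, and the zero-data uniqueness argument for \eqref{wpart} combine to give existence and uniqueness of $u=w+v$. The paper itself offers no further argument beyond citing those two sections, so your elaboration matches (and makes explicit) exactly what is intended.
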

\subsection{Uniform stabilization}
In this section, we establish uniform decay rate estimates for the solutions of the viscoelastic wave equation with external manipulation by using the energy method. Our calculations are only given formally for simplicity, but all steps can be justified by first working with the approximate solutions constructed earlier and then passing to the limit.

\subsubsection*{Modified energy}
The following lemma analyzes the rate of change of the modified energy for a weak solution.
\begin{lemma} If $u$ is a weak solution of \eqref{lineq}, \eqref{neumann-bc}-\eqref{dirichlet-bc}, \eqref{initial-cond}, then its modified energy satisfies the estimate
	\begin{align}\label{e3}
		\begin{split}
			&\varepsilon'_g (t) \leq \frac{1}{2}(g'\circ \nabla u)(t)-\frac{1}{2}g(t)\|\nabla u(t)\|_{L^2(\Omega)}^2  + \epsilon c(1+2 g(0))\|\nabla u(t)\|_{L^2(\Omega)}^2\\&\, +\frac{1}{4\epsilon}g(0)\| h(t)\|_{L^2(\Gamma_{1})}^2 -\frac{1}{4\epsilon}\int_0^t g'(t-s)\|h(s)\|_{L^2(\Gamma_{1})}^2 ds\\ &\,  +\frac{1}{4\epsilon}\|h_{t}(t)\|_{L^2(\Gamma_{1})}^2+\frac{d}{dt}\bigg( \int_{\Gamma_{1}}u(t)h(t)d\Gamma-\int_{\Gamma_{1}}u(t)\int_{0}^{t}g(t-s)h(s)dsd\Gamma \bigg )
		\end{split}
	\end{align} for $\epsilon>0$.
\end{lemma}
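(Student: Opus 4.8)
The natural starting point is the exact identity \eqref{e1} for the rate of change of the modified energy, obtained in the usual way by differentiating \eqref{defmod}, inserting the equation, and using the standard memory-kernel identity for $\frac{d}{dt}(g\circ\nabla u)$; it holds classically for sufficiently regular solutions, and by the approximation procedure announced at the start of this section it may be used here. Thus $\varepsilon'_g(t)=\frac{1}{2}(g'\circ\nabla u)(t)-\frac{1}{2}g(t)\|\nabla u(t)\|_{L^2(\Omega)}^2+\int_{\Gamma_1}u_t h\,d\Gamma-\int_{\Gamma_1}u_t\big(\int_0^t g(t-s)h(s)\,ds\big)\,d\Gamma$. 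The first two terms already appear in \eqref{e3}; the whole difficulty is the two boundary integrals, which carry the Dirichlet trace of $u_t$, an object not controlled \emph{a priori} for a solution with $u_t\in L^\infty(0,T;L^2(\Omega))$. The plan is to move the time derivative off $u_t$ by integrating by parts in time, so that every surviving boundary term involves only the trace of $u(t)\in H_{\Gamma_0}^1(\Omega)$, which is a genuine $L^2(\Gamma_1)$ function.

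Concretely, I would write $\int_{\Gamma_1}u_t h\,d\Gamma=\frac{d}{dt}\int_{\Gamma_1}u h\,d\Gamma-\int_{\Gamma_1}u h_t\,d\Gamma$, and treat the memory boundary term by differentiating the product $u(t)\big(\int_0^t g(t-s)h(s)\,ds\big)$ and invoking Leibniz's rule $\frac{d}{dt}\int_0^t g(t-s)h(s)\,ds=g(0)h(t)+\int_0^t g'(t-s)h(s)\,ds$, which produces (cf. \eqref{est2wN})
\[ \int_{\Gamma_1}u_t\Big(\int_0^t g(t-s)h(s)\,ds\Big)d\Gamma=\frac{d}{dt}\Big(\int_{\Gamma_1}u\int_0^t g(t-s)h(s)\,ds\,d\Gamma\Big)-g(0)\int_{\Gamma_1}u h\,d\Gamma-\int_{\Gamma_1}u\int_0^t g'(t-s)h(s)\,ds\,d\Gamma. \]
Substituting the two identities into \eqref{e1}, the two exact time derivatives assemble into exactly the bracketed $\frac{d}{dt}(\cdots)$ term on the right of \eqref{e3}, and there remain the three residual boundary terms $-\int_{\Gamma_1}u h_t\,d\Gamma$, $g(0)\int_{\Gamma_1}u h\,d\Gamma$, and $\int_{\Gamma_1}u\int_0^t g'(t-s)h(s)\,ds\,d\Gamma$, all of which are now well defined for a weak solution.

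It then remains to dominate these three terms by $\|\nabla u(t)\|_{L^2(\Omega)}^2$ plus data norms. For each I would apply Cauchy--Schwarz on $\Gamma_1$, then the Sobolev trace inequality $\|u\|_{L^2(\Gamma_1)}\le c\|u\|_{H^1(\Omega)}$, then Poincaré's inequality on $H_{\Gamma_0}^1(\Omega)$ (valid because $\Gamma_0\neq\emptyset$) to replace $\|u\|_{H^1(\Omega)}$ by $c\|\nabla u(t)\|_{L^2(\Omega)}$, and finally Young's inequality with small parameter $\epsilon>0$. The first two terms yield $\epsilon c\|\nabla u(t)\|_{L^2(\Omega)}^2+\frac{1}{4\epsilon}\|h_t(t)\|_{L^2(\Gamma_1)}^2$ and $\epsilon c\,g(0)\|\nabla u(t)\|_{L^2(\Omega)}^2+\frac{1}{4\epsilon}g(0)\|h(t)\|_{L^2(\Gamma_1)}^2$. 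For the third, the key is to keep the kernel inside the $s$-integral: one estimates $\big|\int_{\Gamma_1}u\int_0^t g'(t-s)h(s)\,ds\,d\Gamma\big|\le\int_0^t|g'(t-s)|\,\|u(t)\|_{L^2(\Gamma_1)}\|h(s)\|_{L^2(\Gamma_1)}\,ds$, applies Young under the integral, and then uses $\int_0^t|g'(t-s)|\,ds=g(0)-g(t)\le g(0)$ (from $g\ge0$ and $g'\le0$ in (A2)) together with $|g'|=-g'$; this gives $\epsilon c\,g(0)\|\nabla u(t)\|_{L^2(\Omega)}^2-\frac{1}{4\epsilon}\int_0^t g'(t-s)\|h(s)\|_{L^2(\Gamma_1)}^2\,ds$, the last term being nonnegative. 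Summing the three gradient contributions produces the coefficient $\epsilon c(1+2g(0))$, and collecting all the terms reproduces \eqref{e3}.

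For the rigorous justification, since $u$ is merely a weak solution I would perform \eqref{e1} and the above manipulations on the approximate solutions constructed via Faedo--Galerkin (with regularized data, so that the velocity trace is classical), obtain \eqref{e3} for them, and pass to the limit using the weak-$*$ convergences established in the construction of the weak solution together with the strong convergences $h_N\to h$ in $C^1([0,T];L^2(\Gamma_1))$ and $v_N\to v$; weak lower semicontinuity of the convex term $(g\circ\nabla u)$ and of the norms is what lets the estimate survive as an inequality. The main obstacle is this regularity bookkeeping — guaranteeing that the integration by parts in time leaves behind no boundary term involving $u_t|_{\Gamma_1}$ — together with the handling of the iterated memory boundary term $\int_{\Gamma_1}u\int_0^t g'(t-s)h(s)\,ds\,d\Gamma$, which must be bounded so that its contribution lands in the dissipative form $-\frac{1}{4\epsilon}\int_0^t g'(t-s)\|h(s)\|_{L^2(\Gamma_1)}^2\,ds$ rather than costing a term of the wrong sign.
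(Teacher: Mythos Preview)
Your proposal is correct and follows essentially the same route as the paper's proof: starting from the identity \eqref{e1}, rewriting the two boundary integrals via the product rule in time (your two displayed identities are exactly the paper's \eqref{uthgamma1} and \eqref{guthgamma1}), and then estimating the three residual boundary terms with $\epsilon$-Young, trace, and Poincar\'e inequalities as in \eqref{com1}--\eqref{com3}, including the use of $-\int_0^t g'(t-s)\,ds\le g(0)$ for the memory term. Your added remarks on the approximation-and-limit justification are consistent with the paper's stated convention that the calculations are formal and to be justified through the Faedo--Galerkin approximants.
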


\begin{proof}
	Using the definition of $\varepsilon_g$ in \eqref{defmod}, differentiating with respect to $t$, using the main equation, and applying divergence theorem in $x$, we obtain \eqref{e1}. Note that third and last terms at the right hand side of \eqref{e1} can be rewritten as
	\begin{align}\label{uthgamma1}
		\int_{\Gamma_{1}}u_t(t)h(t)d\Gamma=\frac{d}{dt}\int_{\Gamma_{1}}u(t)h(t)d\Gamma-\int_{\Gamma_{1}}u(t)h_t(t)d\Gamma
	\end{align}
	and
	\begin{align}\label{guthgamma1}
		\begin{split}
			\int_{0}^{t} & g(t-s)\int_{\Gamma_{1}}u_t(t)h(s)d\Gamma ds\\
			=&\int_{0}^{t}\frac{d}{dt}\bigg(g(t-s)\int_{\Gamma_{1}}u(t)h(s)d\Gamma\bigg) ds -\int_{0}^{t}g'(t-s)\int_{\Gamma_{1}}u(t)h(s)d\Gamma ds \\
			=& \frac{d}{dt}\int_{0}^{t}g(t-s)\int_{\Gamma_{1}}u(t)h(s)d\Gamma ds-g(0)\int_{\Gamma_{1}}u(t)h(t)d\Gamma\\
			& -\int_{0}^{t}g'(t-s)\int_{\Gamma_{1}}u(t)h(s)d\Gamma ds.
		\end{split}
	\end{align}
	Using $\epsilon$-Young's, trace and Poincaré inequalities, second term at the right hand side of \eqref{uthgamma1} can be estimated as
	\begin{align}\label{com1}
		\int_{\Gamma_{1}}u(t)h_{t}(t)d\Gamma \leq \epsilon c \|\nabla u(t)\|_{L^2(\Omega)}^2 + \frac{1}{4\epsilon}\|h_t(t)\|_{L^2(\Gamma_1)}^2.
	\end{align}
	Similarly, second and third terms at the right hand side of \eqref{guthgamma1} are estimated as
	\begin{align}\label{com2}
		g(0)\int_{\Gamma_{1}}u(t)h(t)d\Gamma \leq \epsilon c g(0) \|\nabla u(t)\|_{L^2(\Omega)}^2 + \frac{1}{4\epsilon}g(0)\|h(t)\|_{L^2(\Gamma_1)}^2
	\end{align}
	and
	\begin{align}\label{com3}
		\begin{split}
			&\int_{\Gamma_{1}}u(t)\int_{0}^{t}g'(t-s)h(s)dsd\Gamma \\
			&\,\leq -\epsilon c \bigg(\int_0^tg'(t-s)ds\bigg)\|\nabla u(t)\|_{L^2(\Omega)}^2 -\frac{1}{4\epsilon}\int_0^tg'(t-s)\|h(s)\|_{L^2(\Gamma_1)}^2ds\\
			&\,\leq \epsilon cg(0) \|\nabla u(t)\|_{L^2(\Omega)}^2-\frac{1}{4\epsilon}\int_0^tg'(t-s)\|h(s)\|_{L^2(\Gamma_1)}^2ds.
		\end{split}
	\end{align}
	The result follows by using \eqref{com1}-\eqref{com3} in \eqref{e1}.
\end{proof}
\subsubsection*{Perturbed energy}
We define (as in \citealp{messaoudi2008}) a perturbed energy functional given by
\begin{align}\label{fdef}
	F(t)\equiv \varepsilon_g (t)+\epsilon_{1}\psi(t)+\epsilon_{2}\chi(t)
\end{align} where $\epsilon_{1}$ and $\epsilon_{2}$ are positive constants and
$$\psi(t)\equiv  \xi(t)\int_{\Omega} u u_{t}dx,$$
$$\chi(t)\equiv -\xi(t)\int_{\Omega}u_{t}\int_{0}^{t}g(t-s)\Big(u(t)-u(s)\Big)dsdx.$$
The following two lemmas are easy to prove.

\begin{lemma}\label{lempoin} If $u \in L^{2}(0,T;H_{\Gamma_0}^1(\Omega))$, then
	\begin{align} \label{tool}
		\int_{\Omega}\bigg(\int_{0}^{t}g(t-s)(u(t)-u(s))ds\bigg)^2 dx \leq c(1-L)(g\circ \nabla u)(t)
	\end{align} and
	\begin{align} \label{toolbdr}
		\int_{\partial \Omega}\bigg(\int_{0}^{t}g(t-s)(u(t)-u(s))ds\bigg)^2 dx \leq c(1-L)(g\circ \nabla u)(t)
	\end{align}
	for a.a. $t\in [0,T]$.
\end{lemma}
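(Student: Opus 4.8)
The plan is to prove both inequalities by the same two-step mechanism: a Cauchy–Schwarz estimate in the time variable that factors out $\int_0^t g(t-s)\,ds$, followed by a Poincaré inequality (for \eqref{tool}) or a Sobolev trace inequality combined with Poincaré (for \eqref{toolbdr}) that converts the $L^2(\Omega)$, respectively $L^2(\partial\Omega)$, norm of the increment $u(t)-u(s)$ into the gradient norm appearing in $(g\circ \nabla u)(t)$.

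First I would fix $x\in\Omega$ and a.e.\ $t\in[0,T]$ and apply Cauchy–Schwarz to the inner integral, splitting $g(t-s)=g(t-s)^{1/2}\cdot g(t-s)^{1/2}$, to get
\[
\Bigl(\int_0^t g(t-s)(u(t)-u(s))\,ds\Bigr)^2 \le \Bigl(\int_0^t g(t-s)\,ds\Bigr)\Bigl(\int_0^t g(t-s)\,|u(t)-u(s)|^2\,ds\Bigr).
\]
By the substitution $\tau=t-s$ and hypothesis (A1) one has $\int_0^t g(t-s)\,ds=\int_0^t g(\tau)\,d\tau\le\int_0^\infty g(\tau)\,d\tau<\tfrac{1-L}{2}$. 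Integrating the previous display over $\Omega$ and interchanging the order of integration (justified by $u\in L^2(0,T;H_{\Gamma_0}^1(\Omega))$ and continuity of $g$), I obtain
\[
\int_\Omega\Bigl(\int_0^t g(t-s)(u(t)-u(s))\,ds\Bigr)^2 dx \le \frac{1-L}{2}\int_0^t g(t-s)\,\|u(t)-u(s)\|_{L^2(\Omega)}^2\,ds.
\]

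Since $u(t)-u(s)\in H_{\Gamma_0}^1(\Omega)$ for a.e.\ $s,t$, Poincaré's inequality gives $\|u(t)-u(s)\|_{L^2(\Omega)}^2\le c_p\|\nabla u(t)-\nabla u(s)\|_{L^2(\Omega)}^2$; inserting this and recalling the definition of $(g\circ\nabla u)(t)$ yields \eqref{tool} with $c=c_p/2$. For the boundary estimate \eqref{toolbdr} the argument is identical through the Cauchy–Schwarz step; afterwards I would apply the trace inequality $\|\varphi\|_{L^2(\partial\Omega)}^2\le c\,\|\varphi\|_{H^1(\Omega)}^2$ to $\varphi=u(t)-u(s)$, followed once more by Poincaré to dominate $\|u(t)-u(s)\|_{H^1(\Omega)}^2$ by $\|\nabla u(t)-\nabla u(s)\|_{L^2(\Omega)}^2$, producing \eqref{toolbdr} with a constant depending only on $\Omega$. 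I do not expect a genuine obstacle here; the only points needing a word of care are the Fubini interchange (covered by the regularity hypothesis on $u$ and continuity of $g$) and the observation that the increment $u(t)-u(s)$ belongs to $H_{\Gamma_0}^1(\Omega)$, so that the Dirichlet–Poincaré inequality with constant $c_p$ is legitimately available.
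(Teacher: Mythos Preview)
Your argument is correct and follows exactly the route the paper indicates: Cauchy--Schwarz in the $s$-variable with the split $g=g^{1/2}g^{1/2}$, the bound $\int_0^t g\le\int_0^\infty g<(1-L)/2$ from (A1), and then Poincar\'e (for \eqref{tool}) or trace plus Poincar\'e (for \eqref{toolbdr}); the paper simply cites \cite{messaoudi2008} for \eqref{tool} and names these same ingredients for \eqref{toolbdr}, so there is nothing to add.
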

\begin{proof}
	Proof of \eqref{tool} is given in \citep{messaoudi2008} when $\Gamma_1=\emptyset$ but the proof easily extends to the case $\Gamma_1\neq \emptyset$ as Poincaré inequality is still valid since $\Gamma_0\neq \emptyset$. \eqref{toolbdr} follows from Cauchy-Schwarz, trace and Poincaré inequalities together with the assumption on the relaxation function.
\end{proof}
\begin{lemma}[\citealp{messaoudi2008}] For $\epsilon_{1},\epsilon_{2}> 0$ small enough, the inequalities
	\begin{align}\label{masu}
		\alpha_{1}F(t)\leq\varepsilon_g(t)\leq \alpha_{2} F(t)
	\end{align}
	hold for some positive constants $\alpha_{1}$ and  $\alpha_{2}$.
\end{lemma}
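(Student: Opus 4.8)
The plan is to treat $F$ as a small perturbation of $\varepsilon_g$: I will show that $|\psi(t)|+|\chi(t)|\le c_0\,\varepsilon_g(t)$ for a constant $c_0$ independent of $t,\epsilon_1,\epsilon_2$. Granting this, $|F(t)-\varepsilon_g(t)|\le c_0(\epsilon_1+\epsilon_2)\varepsilon_g(t)$, so once $\epsilon_1,\epsilon_2$ are small enough that $c_0(\epsilon_1+\epsilon_2)\le\tfrac12$ one gets $\tfrac12\varepsilon_g(t)\le F(t)\le\tfrac32\varepsilon_g(t)$, which is \eqref{masu} with $\alpha_1=\tfrac23$ and $\alpha_2=2$.

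First I would record the elementary facts that make the perturbation terms controllable. From (A2), $\xi'\le 0$ gives $0<\xi(t)\le\xi(0)$ for all $t\ge 0$. From (A1), $1-\int_0^t g(s)\,ds\ge 1-\int_0^\infty g(s)\,ds>L$, hence by \eqref{defmod}, whose three summands are all nonnegative, the energy dominates each piece separately:
$$\|\nabla u(t)\|_{L^2(\Omega)}^2\le\tfrac{2}{L}\varepsilon_g(t),\qquad \|u_t(t)\|_{L^2(\Omega)}^2\le 2\varepsilon_g(t),\qquad (g\circ\nabla u)(t)\le 2\varepsilon_g(t).$$

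Next I would estimate $\psi$ and $\chi$. For $\psi(t)=\xi(t)\int_\Omega u u_t\,dx$, Cauchy--Schwarz, the Poincaré inequality $\|u\|_{L^2(\Omega)}^2\le c_p\|\nabla u\|_{L^2(\Omega)}^2$, Young's inequality and $\xi(t)\le\xi(0)$ yield $|\psi(t)|\le\tfrac{\xi(0)\sqrt{c_p}}{2}\big(\|\nabla u\|_{L^2(\Omega)}^2+\|u_t\|_{L^2(\Omega)}^2\big)\le c\,\varepsilon_g(t)$. For $\chi(t)=-\xi(t)\int_\Omega u_t\int_0^t g(t-s)(u(t)-u(s))\,ds\,dx$, the essential input is Lemma \ref{lempoin}: by \eqref{tool} the $L^2(\Omega)$-norm of the convolution $\int_0^t g(t-s)(u(t)-u(s))\,ds$ is at most $\sqrt{c(1-L)(g\circ\nabla u)(t)}$, so Cauchy--Schwarz and Young give $|\chi(t)|\le\tfrac{\xi(0)\sqrt{c(1-L)}}{2}\big(\|u_t\|_{L^2(\Omega)}^2+(g\circ\nabla u)(t)\big)\le c\,\varepsilon_g(t)$. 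Adding these produces the bound $|\psi(t)|+|\chi(t)|\le c_0\varepsilon_g(t)$ used in the first paragraph.

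I do not anticipate a genuine obstacle here; the only delicate point is $\chi$, where one must not bound $u(t)-u(s)$ pointwise in $s$ but instead use the Poincaré-type estimate \eqref{tool} to convert the memory convolution directly into the dissipation quantity $(g\circ\nabla u)(t)$, and one must invoke the sign condition $\xi'\le 0$ in (A2) to know that $\xi$ is bounded by $\xi(0)$. Everything else is Young's-inequality bookkeeping, and the smallness of $\epsilon_1,\epsilon_2$ is used only at the very end to absorb $c_0(\epsilon_1+\epsilon_2)$ below $1$.
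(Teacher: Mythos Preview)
Your proposal is correct and follows exactly the standard argument: bound $|\psi|$ and $|\chi|$ by a constant times $\varepsilon_g$ (using Poincar\'e, Young, the boundedness $\xi\le\xi(0)$ from (A2), and the memory estimate \eqref{tool} from Lemma~\ref{lempoin}), then absorb the perturbation by taking $\epsilon_1,\epsilon_2$ small. The paper itself does not give a proof of this lemma but simply attributes it to \citet{messaoudi2008}, whose argument is precisely the one you have written out.
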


Next, we prove an estimate on the rate of change of $\psi$.
\begin{lemma} Under assumptions (A1) and (A2) on the relaxation function, $\psi$ satisfies
	\begin{align}\label{pissi}
		\begin{split}
			\psi'(t)&\le -\frac{L(2-L)}{4}\xi(t)\|  \nabla u(t)\|_{L^2(\Omega)}^2+c\xi(t)\bigg(\|  u_{t}(t)\|_{L^2(\Omega)}^2+(g\circ \nabla u)(t)\\
			&\quad+\| h(t)\|_{L^2(\Gamma_{1})}^2 +\int_{0}^{t}g(t-s)\| h(s)\|_{L^2(\Gamma_{1})}^2ds\bigg).
		\end{split}
	\end{align}
\end{lemma}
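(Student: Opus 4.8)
The goal is a differential estimate for $\psi(t)=\xi(t)\int_\Omega u u_t\,dx$, so the natural first move is to differentiate: $\psi'(t)=\xi'(t)\int_\Omega u u_t\,dx+\xi(t)\int_\Omega u_t^2\,dx+\xi(t)\int_\Omega u u_{tt}\,dx$. The term with $\xi'$ is harmless because $|\xi'/\xi|\le k$ by (A2), so $\xi'(t)\int_\Omega uu_t\,dx\le c\,\xi(t)(\|u_t\|_{L^2}^2+\|\nabla u\|_{L^2}^2)$ after Cauchy–Schwarz, Poincaré and Young — but the $\|\nabla u\|^2$ contribution produced this way must be absorbable into the $-\tfrac{L(2-L)}{4}\xi\|\nabla u\|^2$ on the right, which is fine since the constant there is not sharp and one may shrink it; I would carry a small parameter and only fix it at the end. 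The $\xi\|u_t\|_{L^2}^2$ term is already present on the right-hand side of \eqref{pissi}, so it can be kept as is.

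**Main computation.** The heart of the matter is $\xi(t)\int_\Omega u u_{tt}\,dx$. Using the weak formulation \eqref{ap1new2} (equivalently, testing \eqref{lineq} with $u$ and using the Neumann condition \eqref{neumann-bc}), one substitutes
$$\int_\Omega u u_{tt}\,dx=-\|\nabla u(t)\|_{L^2(\Omega)}^2+\int_0^t g(t-s)(\nabla u(s),\nabla u(t))_\Omega\,ds+\langle h(t),u(t)\rangle_{\Gamma_1}-\int_0^t g(t-s)\langle h(s),u(t)\rangle_{\Gamma_1}\,ds.$$
The memory term is handled by the standard trick: write $\nabla u(s)=\nabla u(s)-\nabla u(t)+\nabla u(t)$, so
$$\int_0^t g(t-s)(\nabla u(s),\nabla u(t))_\Omega\,ds=\Big(\int_0^t g(s)\,ds\Big)\|\nabla u(t)\|_{L^2}^2+\int_0^t g(t-s)(\nabla u(s)-\nabla u(t),\nabla u(t))_\Omega\,ds,$$
and the last integral is estimated by Cauchy–Schwarz and Young against $(g\circ\nabla u)(t)$ with a small weight, using $\int_0^t g\le\int_0^\infty g<\tfrac{1-L}{2}$. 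Combining, $-\|\nabla u\|^2+(\int_0^t g)\|\nabla u\|^2\le -(1-\tfrac{1-L}{2})\|\nabla u\|^2=-\tfrac{1+L}{2}\|\nabla u\|^2$; after spending some of this on the cross term and on the $\xi'$ term above, the surviving coefficient can be made $\le-\tfrac{L(2-L)}{4}$ — indeed $\tfrac{1+L}{2}>\tfrac{L(2-L)}{4}$ for $L\in(0,1)$, leaving room. The two boundary terms $\langle h(t),u(t)\rangle_{\Gamma_1}$ and $\int_0^t g(t-s)\langle h(s),u(t)\rangle_{\Gamma_1}\,ds$ are treated by the trace inequality $\|u\|_{L^2(\Gamma_1)}^2\le c\|\nabla u\|_{L^2(\Omega)}^2$ (valid since $\Gamma_0\neq\emptyset$, as in Lemma \ref{lempoin}) and $\epsilon$-Young, producing $\epsilon c\|\nabla u\|^2+\tfrac{c}{\epsilon}\|h(t)\|_{L^2(\Gamma_1)}^2$ for the first and, using $\int_0^t g(t-s)\langle h(s),u(t)\rangle\le \epsilon c\|\nabla u\|^2+\tfrac{c}{\epsilon}\big(\int_0^t g\big)\int_0^t g(t-s)\|h(s)\|_{L^2(\Gamma_1)}^2\,ds$ (Cauchy–Schwarz in the $s$-integral with weight $g$), the memory-boundary term. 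All the $\|\nabla u\|^2$ pieces carry a factor $\xi(t)$ once multiplied through, and $\epsilon$ is chosen small after the dust settles so the total negative coefficient on $\xi\|\nabla u\|^2$ is at most $-\tfrac{L(2-L)}{4}$.

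**Expected obstacle.** The bookkeeping of the several $\|\nabla u\|_{L^2(\Omega)}^2$ contributions — from $\xi'/\xi$, from the memory cross term, and from the two boundary traces — all of which must fit inside the fixed budget $\tfrac{1+L}{2}-\tfrac{L(2-L)}{4}$ coming from the coercivity of $-\|\nabla u\|^2+(\int g)\|\nabla u\|^2$. This is the one place where one must check the constants are genuinely compatible rather than merely ``generic $c$''; since (A1) gives strict inequality $1-2\int_0^\infty g>L$, there is a definite margin, so choosing the Young parameters small enough closes it. A minor secondary point is the rigor caveat already flagged in the text: the identity for $\int_\Omega u u_{tt}$ is used formally, but it is legitimate on the Galerkin level and passes to the limit, so no additional argument is needed here. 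Everything else is Cauchy–Schwarz, trace, Poincaré and Young, so after fixing parameters the stated inequality \eqref{pissi} follows.
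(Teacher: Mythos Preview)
Your proposal is correct and follows essentially the same approach as the paper: differentiate $\psi$, substitute the equation via the divergence theorem to obtain the identity \eqref{pisi}, then estimate the boundary terms by trace/Poincar\'e/Young, the memory gradient term by the $\nabla u(s)=[\nabla u(s)-\nabla u(t)]+\nabla u(t)$ splitting against $(g\circ\nabla u)$, and the $\xi'$ term using $|\xi'/\xi|\le k$. The only cosmetic difference is that the paper handles the memory term by first applying Cauchy--Schwarz $ab\le\tfrac12 a^2+\tfrac12 b^2$ and then the triangle inequality on $|\nabla u(s)|$ (see \eqref{pisi1}--\eqref{pisi2}), which yields a limiting coefficient $-\tfrac{L(2-L)}{2}$ rather than your $-\tfrac{1+L}{2}$; both leave ample margin over the stated $-\tfrac{L(2-L)}{4}$, so the parameter bookkeeping closes exactly as you describe.
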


\begin{proof}Using the main equation and the divergence theorem, we get
	\begin{align}\label{pisi}
		\begin{split}
			\psi'(t)&=\xi(t)\int_{\Gamma_{1}}h(t)u(t)d\Gamma -\xi(t)\int_{\Omega}|\nabla u(t)|^2 dx\\
			&-\xi(t)\int_{\Gamma_{1}}u(t)\int_{0}^{t} g(t-s)h(s)dsd\Gamma +\xi(t)\int_{\Omega}|u_{t}(t)|^2dx \\
			&+\xi(t)\int_{\Omega}\nabla u(t)\int_{0}^{t}g(t-s)\nabla u(s)dsdx+\xi'(t)\int_{\Omega}u(t)u_{t}(t)dx.
		\end{split}
	\end{align}
	Now, we will estimate the terms at the right hand side of (\ref{pisi}). Young's, trace and Poincaré inequalities applied to first and third terms give rise to
	\begin{align}
		\int_{\Gamma_{1}}h(t)u(t)d\Gamma \leq \frac{1}{4\eta} \| h(t)\|_{L^2(\Gamma_{1})}^2+ \eta c \|  \nabla u(t)\|_{L^2(\Omega)}^2
	\end{align}
	and
	\begin{align}
		\begin{split}
		\int_{\Gamma_{1}} & u(t)\int_{0}^{t} g(t-s)h(s)dsd\Gamma\\
		& \leq \eta c \bigg( \int_{0}^{t}g(t-s)ds \bigg) \|  \nabla u(t)\|_{L^2(\Omega)}^2
		+\frac{1}{4\eta}\int_{0}^{t}g(t-s)\| h(s)\|_{L^2(\Gamma_{1})}^2ds
		\end{split}
	\end{align} for $\eta>0$.
	We estimate fifth term at the right hand side of (\ref{pisi}) by observing that
	\begin{align}\label{pisi1}
		\begin{split}
			&\int_{\Omega}\nabla u(t)\int_{0}^{t}g(t-s)\nabla u(s)dsdx  \\
			&\leq\frac{1}{2} \int_{\Omega}|\nabla u|^2 dx+\frac{1}{2} \int_{\Omega}\bigg(\int_{0}^{t}g(t-s)\Big(|\nabla u(s)- \nabla u(t) |+ | \nabla u(t)| \Big)ds\bigg)^2dx.
		\end{split}
	\end{align}
	From Cauchy-Schwarz inequality and $\int_{0}^{t}g(s)ds<1-L$ with $L\in (0,1)$ (see (A1)), we estimate the last term above as
	\begin{align}\label{pisi2}
		\begin{split}
			&\int_{\Omega}\bigg( \int_{0}^{t}g(t-s)\Big(|\nabla u(s)- \nabla u(t)|+|\nabla u(t)| \Big )ds\bigg)^2dx\\
			&\quad\leq (1+\frac{1}{\eta})(1-L)(g\circ \nabla u)(t)+(1+\eta)(1-L)^2 \|  \nabla u(t)\|_{L^2(\Omega)}^2.
		\end{split}
	\end{align}
	Combining above estimates and using the inequality
	\begin{align}
		\int_{\Omega}u(t)u_t(t)dx\leq \alpha c\|  \nabla u(t)\|_{L^2(\Omega)}^2+\frac{1}{4\alpha}\|  u_{t}(t)\|_{L^2(\Omega)}^2, \ \ \ \alpha >0,
	\end{align}
	we arrive at
	\begin{align*}
		\begin{split}
			&\psi'(t)\leq  \bigg(1+\frac{1}{4\alpha} k\bigg)\xi(t)\|u_{t}(t)\|_{L^2(\Omega)}^2+\frac{1}{2}(1+\frac{1}{\eta})(1-L)\xi(t)(g\circ \nabla u)(t)\\
			&-\frac{1}{2}\bigg( 1-(1+\eta)(1-L)^2-2k\alpha c-\eta  c \Big(1+\int_{0}^{t}g(t-s)ds\Big)\bigg)\xi(t)\|  \nabla u(t)\|_{L^2(\Omega)}^2\\
			&+\frac{1}{4\eta}\xi(t) \| h(t)\|_{L^2(\Gamma_{1})}^2 +\frac{1}{4\eta}\xi(t)\int_{0}^{t}g(t-s)\| h(s)\|_{L^2(\Gamma_{1})}^2ds,
		\end{split}
	\end{align*} from which the result of lemma follows with sufficiently small choices of $\eta$ and $\alpha$.
\end{proof}
Now, we estimate the rate at which the functional $\chi$ changes.
\begin{lemma}Under assumptions (A1) and (A2) on the relaxation function, $\chi$ satisfies
	\begin{align}\label{kayy}
		\begin{split}
			\chi'(t) \leq & c\xi(t)\Big(-(g'\circ \nabla u)(t)+(g\circ \nabla u)(t)+(g\circ_{\Gamma_{1}} h)(t)+\| h(t)\|_{L^2(\Gamma_{1})}^2\Big)\\
			&+\xi(t)\delta\left(1+2(1-L)^2\right)\|\nabla u(t)\|_{L^2(\Omega)}^2\\
			&+\xi(t)\left(\delta(1+k)-\int_{0}^{t}g(s)ds\right)\|u_t(t)\|_{L^2(\Omega)}^2
		\end{split}
	\end{align} for $\delta>0$.
\end{lemma}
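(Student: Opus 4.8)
The plan is to differentiate $\chi(t) = -\xi(t)\int_\Omega u_t \int_0^t g(t-s)(u(t)-u(s))\,ds\,dx$ directly, expanding the product rule across the three factors: the $\xi(t)$ prefactor, the $u_t$ term, and the memory convolution term. Differentiating $u_t$ produces $u_{tt}$, which I will replace using the main equation $u_{tt} = \triangle u - \int_0^t g(t-s)\triangle u(s)\,ds$; then the divergence theorem converts the spatial Laplacian terms into $\int_\Omega \nabla u \cdot \nabla(\cdots)\,dx$ plus boundary terms on $\Gamma_1$ involving $h(t)$ (since $\partial_n u|_{\Gamma_1}=h$ and $\partial_n u|_{\Gamma_0}$ is paired against a test function vanishing on $\Gamma_0$). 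Differentiating the convolution factor $\int_0^t g(t-s)(u(t)-u(s))\,ds$ with respect to $t$ gives $g(0)\cdot 0 + \int_0^t g'(t-s)(u(t)-u(s))\,ds + \left(\int_0^t g(s)\,ds\right)u_t$; the last of these yields the crucial negative term $-\xi(t)\left(\int_0^t g(s)\,ds\right)\|u_t(t)\|_{L^2(\Omega)}^2$. The $\xi'(t)$ contribution is controlled by $|\xi'/\xi|\le k$ from (A2), producing the $k$-dependent part of the $\|u_t\|^2$ coefficient.

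After this expansion I would estimate each resulting term. The strategy is to absorb every cross term into $\delta\|\nabla u(t)\|_{L^2(\Omega)}^2$, $\delta\|u_t(t)\|_{L^2(\Omega)}^2$, and multiples of $(g\circ\nabla u)(t)$, $-(g'\circ\nabla u)(t)$, and the boundary analogues. For the interior convolution terms I will use Lemma \ref{lempoin}, specifically \eqref{tool}, to bound $\int_\Omega\left(\int_0^t g(t-s)(u(t)-u(s))\,ds\right)^2 dx$ by $c(1-L)(g\circ\nabla u)(t)$, and the analogous boundary term by \eqref{toolbdr}; the term with $g'$ inside will, after Cauchy–Schwarz in the measure $-g'(t-s)\,ds$, be bounded by $-(g'\circ\nabla u)(t)$ (up to constants and a factor $\int_0^t -g'(s)\,ds\le g(0)$). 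The boundary terms coming from the divergence theorem — one with $h(t)$ against the convolution trace, one with $h(t)$ directly — are handled by $\epsilon$-Young plus the trace and Poincaré inequalities, throwing the $\|\nabla u\|^2$ part into the $\delta$-bucket and leaving $\|h(t)\|_{L^2(\Gamma_1)}^2$ and the boundary memory quantity $(g\circ_{\Gamma_1}h)(t)$ on the right. Note $(g\circ_{\Gamma_1}h)(t)$ is the natural $\Gamma_1$-analogue of $g\circ\nabla u$, arising from the $\int_0^t g(t-s)h(s)\,ds$ trace coupled against $u(t)|_{\Gamma_1}$.

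The main obstacle is bookkeeping rather than depth: one must carefully track which terms carry the favorable sign. In particular, the $-\xi(t)\left(\int_0^t g(s)\,ds\right)\|u_t\|^2$ term must survive the clash with the $+\xi(t)\|u_t\|^2$-type contributions generated by differentiating $u_t$ and by the $\xi'$ term, which is why the final coefficient is $\delta(1+k)-\int_0^t g(s)\,ds$ — the $\delta(1+k)$ collecting the small, sign-indefinite pieces and the genuine negative part retained intact. A secondary subtlety is that $\int_0^t g(t-s)\,ds < 1-L$ must be used (via (A1)) to keep the constants in front of $\|\nabla u\|^2$ of the stated form $\delta(1+2(1-L)^2)$; the factor $(1-L)^2$ is exactly what appears when $\int_0^t g(t-s)(u(t)-u(s))\,ds$ is split as $(|\nabla u(s)-\nabla u(t)| + |\nabla u(t)|)$-type bound and the $|\nabla u(t)|$ part is squared against $\left(\int_0^t g\right)^2 \le (1-L)^2$. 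Since $\delta>0$ is free, the final inequality \eqref{kayy} follows after choosing the various $\epsilon$- and $\eta$-parameters small and renaming constants.
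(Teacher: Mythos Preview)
Your proposal is correct and follows essentially the same route as the paper's proof: differentiate $\chi$ via the product rule, replace $u_{tt}$ by the main equation, apply the divergence theorem to produce the two boundary terms on $\Gamma_1$ (one carrying $h(t)$, the other carrying the convolution $\int_0^t g(t-s)h(s)\,ds$), isolate the favorable term $-\xi(t)\big(\int_0^t g(s)\,ds\big)\|u_t\|^2$ from the time-derivative of the memory factor, and then estimate each of the resulting seven terms by Young's inequality together with Lemma~\ref{lempoin} and the bounds $\int_0^t g\le 1-L$, $\int_0^t(-g')\le g(0)$, $|\xi'/\xi|\le k$. One minor clarification: the quantity $(g\circ_{\Gamma_1}h)(t)$ does not arise from pairing $\int_0^t g(t-s)h(s)\,ds$ against $u(t)|_{\Gamma_1}$, but rather from splitting $\int_0^t g(t-s)h(s)\,ds = \int_0^t g(t-s)(h(s)-h(t))\,ds + \big(\int_0^t g\big)h(t)$ inside the boundary term where that convolution is paired against the convolution trace $\int_0^t g(t-s)(u(t)-u(s))\,ds$; squaring the first piece (via Cauchy--Schwarz in the measure $g(t-s)\,ds$) is what produces $(g\circ_{\Gamma_1}h)(t)$, and the second piece contributes the extra $\|h(t)\|_{L^2(\Gamma_1)}^2$ with coefficient $(1-L)^2$.
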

\begin{proof}Using the main equation and divergence theorem, we obtain
	\begin{align}\label{kay}
		\begin{split}
			\chi&'(t)=-\xi(t)\int_{\Gamma_1}h(t)\int_{0}^{t} g(t-s)(u(t)-u(s)) ds d\Gamma\\
			&+\xi(t)\int_{\Omega} \nabla u(t) \int_{0}^{t}\Big(g(t-s)(\nabla u(t)-\nabla u(s))ds \Big)dx \\
			&+\xi(t)\int_{\Gamma_1}\bigg(\int_{0}^{t} g(t-s) h(s) ds  \bigg)\bigg(\int_{0}^{t} g(t-s)(u(t)-u(s))ds\bigg)d\Gamma\\
			& -\xi(t)\int_{\Omega}\bigg(\int_{0}^{t} g(t-s) \nabla u(s)ds  \bigg)\bigg(\int_{0}^{t} g(t-s)(\nabla u(t)-\nabla u(s))ds\bigg)dx \\
			&-\xi(t)\int_{\Omega}u_t(t)\int_{0}^{t}g'(t-s)(u(t)-u(s))dsdx\\
			&-\xi'(t)\int_{\Omega}u_t(t)\int_{0}^{t}g(t-s)(u(t)-u(s))dsdx-\xi(t)\Big(\int_{0}^{t}g(s)ds  \Big)\int_{\Omega}|u_t(t)|^2dx.
		\end{split}
	\end{align}
	Using Young's inequality and Lemma \ref{lempoin}, we estimate first term at the RHS of \eqref{kay} as
	\begin{align}\label{kay1}
		\begin{split}
			\int_{\Gamma_{1}}& h(t) \int_{0}^{t}\Big(g(t-s)(u(t)-u(s))ds \Big)d\Gamma \leq \delta \| h(t)\|_{L^2(\Gamma_{1})}^2+ c\frac{(1-L)}{4\delta}(g\circ \nabla u)(t)
		\end{split}
	\end{align} for $\delta>0$.
	Using Young's inequality and the assumption on the relaxation function, second term at the RHS of \eqref{kay} is estimated as
	\begin{align}
		\begin{split}
			\int_{\Omega}\nabla u(t) \int_{0}^{t}\Big(g(t-s)(\nabla u(t)- \nabla u(s))ds \Big)dx\\
			\leq \delta \|  \nabla u(t)\|_{L^2(\Omega)}^2+ \frac{1-L}{4\delta}(g\circ \nabla u)(t).
		\end{split}
	\end{align}
	Regarding  third, fourth, fifth and sixth terms at the RHS of \eqref{kay}, we have the estimates
	\begin{multline}
		\int_{\Gamma_1}\bigg(\int_{0}^{t} g(t-s) h(s) ds  \bigg)\bigg(\int_{0}^{t} g(t-s)(u(t)- u(s))ds\bigg)d\Gamma \\ \leq 2\delta(1-L)(g\circ_{\Gamma_{1}} h)(t)+2\delta(1-L)^{2} \| h(t)\|_{L^2(\Gamma_{1})}^2+c\frac{(1-L)}{4\delta}(g\circ \nabla u)(t),
	\end{multline}
	
	\begin{multline}
		\int_{\Omega}\bigg(\int_{0}^{t} g(t-s) \nabla u(s)ds  \bigg)\bigg(\int_{0}^{t} g(t-s)(\nabla u(t)-\nabla u(s))ds\bigg)dx
		\\ \leq \Big(2\delta +\frac{1}{4\delta} \Big)(1-L)(g\circ \nabla u)(t)+ 2\delta (1-L)^2\|  \nabla u(t)\|_{L^2(\Omega)}^2,
	\end{multline}
	
	\begin{align}
		\int_{\Omega}u_t\int_{0}^{t}g'(t-s)(u(t)-u(s))dsdx\leq \delta \|  u_{t}(t)\|_{L^2(\Omega)}^2-\frac{cg(0)}{4\delta}(g'\circ \nabla u)(t),
	\end{align}
	and
	\begin{align}\label{kay2}
		\int_{\Omega}u_t\int_{0}^{t}g(t-s)(u(t)-u(s))dsdx\leq  \delta \|  u_{t}(t)\|_{L^2(\Omega)}^2+c\frac{(1-L)}{4\delta}(g\circ \nabla u)(t).
	\end{align}
	The assertion of lemma is established  by combining (\ref{kay})-(\ref{kay2}).
\end{proof}

The main result of the paper is stated as follows.

\begin{theorem}\label{mainresultthm}
	Let $(u^0,u^1) \in H_{\Gamma_0}^1(\Omega)\times  L^{2}(\Omega)$, $h\in C^1([0,\infty), L^2(\Gamma_1))$  be given. Under assumptions (A1) and (A2) on the relaxation function,
	the modified  energy functional satisfies
	\begin{align}\label{modenest}
		\varepsilon_g(t)\leq\alpha_3 e^{(\frac{2\epsilon c}{1-2\epsilon c}-1)\int_0^t(\kappa\xi(s)-c_\epsilon)ds}\tilde{H}(t),
	\end{align}
	where $\alpha_3$, $c_\epsilon$ and  $\tilde{H}(t)$ are given in (\ref{al3}),(\ref{c_eps}) and (\ref{Htilde}), respectively.
	
	Moreover,  if $\xi(t)$ is bounded below by a positive constant and the boundary datum satisfies the conditions
	\begin{align}\label{decayh}
		\|h(t) \|_{L^2(\Gamma_{1})}^2, \|h_t(t)\|_{L^2(\Gamma_{1})}^2\quad =\quad \mathcal{O}\left(e^{-\lambda\int_0^t \xi(s)ds}\right),
	\end{align}
	where $\lambda$ is positive constant (can be arbitrarily small), then, \eqref{modenest} becomes a decay estimate for the modified  energy, where the decay rate is characterized by the  decay rate of the relaxation function $g$ and the boundary datum $h$.
\end{theorem}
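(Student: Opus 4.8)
The plan is to run a perturbed–energy (Lyapunov) argument in the spirit of \citet{messaoudi2008}, modified so that the boundary forcing generated by the Neumann input is absorbed. The only genuinely new feature compared with the homogeneous case is the exact time derivative
$$\frac{d}{dt}B(t),\qquad B(t):=\int_{\Gamma_{1}}u(t)h(t)\,d\Gamma-\int_{\Gamma_{1}}u(t)\int_{0}^{t}g(t-s)h(s)\,ds\,d\Gamma,$$
that appears on the right–hand side of the modified–energy estimate \eqref{e3}. Crucially $B(t)$ is well defined because $u(t)\in H^{1}_{\Gamma_{0}}(\Omega)$ has a legitimate trace on $\Gamma_{1}$ (it is $u_t|_{\Gamma_1}$ that is not controlled), so instead of the perturbed energy $F$ of \eqref{fdef} I would work with the shifted functional $\mathcal F(t):=F(t)-B(t)$. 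Then the term $B'$ exactly cancels the offending one, and feeding \eqref{e3}, \eqref{pissi} and \eqref{kayy} into $\mathcal F'=\varepsilon_g'+\epsilon_1\psi'+\epsilon_2\chi'-B'$ yields an inequality of the schematic form
\begin{align*}
\mathcal F'(t)\le\ &-c_{1}\xi(t)\|\nabla u(t)\|_{L^2(\Omega)}^{2}-c_{2}\xi(t)\|u_t(t)\|_{L^2(\Omega)}^{2}-c_{3}\xi(t)(g\circ\nabla u)(t)\\
&+\epsilon c\,\|\nabla u(t)\|_{L^2(\Omega)}^{2}+R_h(t),
\end{align*}
where $R_h(t)$ lumps together the boundary terms in $h$, namely $\|h\|_{L^2(\Gamma_1)}^2$, $\|h_t\|_{L^2(\Gamma_1)}^2$, $\int_0^tg(t-s)\|h(s)\|_{L^2(\Gamma_1)}^2\,ds$, $(g\circ_{\Gamma_1}h)(t)$ and $-\int_0^tg'(t-s)\|h(s)\|_{L^2(\Gamma_1)}^2\,ds$. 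Following the paper's convention, all of this is first carried out on the Galerkin level of Section \ref{linw} and then passed to the limit, so the manipulations are rigorous.

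The heart of the proof is the ordered choice of the auxiliary constants that makes the first line above strictly dissipative. I would: (i) invoke (A2) in the form $(g'\circ\nabla u)(t)\le-\xi(t)(g\circ\nabla u)(t)$ (a consequence of $g'\le-\xi g$ together with $\xi'\le0$, so $\xi(t-s)\ge\xi(t)$), splitting the good term $\tfrac12(g'\circ\nabla u)(t)$ to simultaneously dominate the $(g\circ\nabla u)$–contributions of $\psi'$ and $\chi'$ and to neutralize the spurious $-\epsilon_2c\,\xi(g'\circ\nabla u)$ term, using in addition $\xi(t)\le\xi(0)$; (ii) fix $\epsilon_{2}$ small, then $\epsilon_{1}$ small relative to $\epsilon_{2}$, so that the coefficient $\epsilon_{1}c\,\xi(t)+\epsilon_{2}\xi(t)\bigl(\delta(1+k)-\int_0^tg(s)ds\bigr)$ of $\|u_t\|^2$ becomes $\le-c_{2}\xi(t)$ for all $t\ge t_{0}$, where $t_{0}$ is chosen so that $\int_0^{t_0}g(s)ds>\delta(1+k)+\epsilon_1c/\epsilon_2$ — possible since $0<\int_0^{\infty}g<\tfrac{1-L}{2}$ by (A1) and $\int_0^tg$ is strictly increasing; (iii) fix $\delta$ small so the coefficient $-\epsilon_{1}\tfrac{L(2-L)}{4}\xi+\epsilon_{2}\delta\bigl(1+2(1-L)^2\bigr)\xi$ of $\|\nabla u\|^2$ becomes $\le-c_{1}\xi$. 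Since $\varepsilon_g(t)\ge\tfrac{1+L}{4}\|\nabla u(t)\|^2$ by \eqref{defmod} and (A1), the stray $\epsilon c\|\nabla u\|^2$ is $\le c_{\epsilon}\varepsilon_g$ with $c_{\epsilon}=\mathcal O(\epsilon)$, and the dissipative line dominates $\kappa\xi(t)\varepsilon_g(t)$ with $\kappa$ built from $c_1,c_2,c_3$; hence, for $t\ge t_{0}$,
$$\mathcal F'(t)\le-\bigl(\kappa\xi(t)-c_{\epsilon}\bigr)\varepsilon_g(t)+R_h(t).$$

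It remains to close this in $\mathcal F$ and integrate. From Lemma \ref{lempoin} (in particular \eqref{toolbdr}) together with Young's, the trace and Poincaré's inequalities one gets $|B(t)|\le2\epsilon c\,\varepsilon_g(t)+\widehat H_{B}(t)$ for a further $h$–functional $\widehat H_B$; inserting this into the equivalence \eqref{masu} and solving for $\varepsilon_g$ produces two–sided bounds of the type $\varepsilon_g\le\tfrac{1}{1-2\epsilon c}\bigl(c\,\mathcal F+\widehat H_B\bigr)$ and a matching lower bound — and it is exactly $F=\bigl(1+\tfrac{2\epsilon c}{1-2\epsilon c}\bigr)\mathcal F+\widehat H_B$ that puts the multiplicative factor $\tfrac{2\epsilon c}{1-2\epsilon c}=\tfrac{1}{1-2\epsilon c}-1$ of \eqref{modenest} in place. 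Substituting the appropriate one of these bounds according to the sign of $\kappa\xi(t)-c_{\epsilon}$ turns the last display into a scalar differential inequality for $\mathcal F$ with forcing a linear combination of $R_h$ and $\widehat H_B$; multiplying by the integrating factor $\exp\!\bigl(-(\tfrac{2\epsilon c}{1-2\epsilon c}-1)\int_0^t(\kappa\xi-c_{\epsilon})\,ds\bigr)$, integrating over $[t_{0},t]$, estimating the forcing integral, and then splicing in the elementary Gronwall bound on $[0,t_{0}]$ (where the energy is merely controlled by the data and $R_h$, with no decay), one obtains \eqref{modenest}, the constant $\alpha_{3}$ and the functional $\tilde H(t)$ being the ones thereby produced. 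Finally, if $\xi\ge\xi_{0}>0$ and $\epsilon$ is taken small enough that $4\epsilon c<1$ and $\kappa\xi_{0}>c_{\epsilon}$, then $(\tfrac{2\epsilon c}{1-2\epsilon c}-1)(\kappa\xi(s)-c_{\epsilon})$ is bounded above by a strictly negative constant, so the exponential in \eqref{modenest} decays at least like $e^{-\sigma t}$ with $\sigma>0$ governed by $\xi_{0}$ (hence by $g$ through (A2)); under \eqref{decayh} the factor $\tilde H(t)$ is at most of polynomial growth, so $\varepsilon_g(t)\to0$ with rate the slower of the two exponential scales attached to $g$ and $h$.

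I expect the principal obstacle to be steps (i)–(iii): the three coefficients of $\|u_t\|^2$, $\|\nabla u\|^2$ and $(g\circ\nabla u)$ must be made \emph{simultaneously} negative, which forces a strict ordering among $\epsilon_{2}$, $\epsilon_{1}$, $\delta$ (and $\epsilon$) and the introduction of a threshold time $t_{0}$, unavoidable because $\int_0^tg$ vanishes at $t=0$. A secondary but necessary bookkeeping task is to verify that every estimate used above survives with only $h\in C^{1}([0,\infty);L^{2}(\Gamma_{1}))$, i.e.\ at the level of the approximants constructed in Section \ref{linw}, before letting $N\to\infty$.
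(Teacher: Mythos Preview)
Your proposal is correct and follows the paper's route closely: same perturbed energy $F=\varepsilon_g+\epsilon_1\psi+\epsilon_2\chi$, same preparatory estimates \eqref{e3}, \eqref{pissi}, \eqref{kayy}, same use of $(g'\circ\nabla u)\le-\xi(g\circ\nabla u)$ via (A2), same ordered selection of $\epsilon_1,\epsilon_2,\delta$ with a threshold time $t_0$, and the same final absorption of the boundary functional $B$ through trace/Poincar\'e/Young.

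The one structural difference is where the exact derivative $B'(t)$ is discharged. You subtract $B$ from $F$ at the outset and work with $\mathcal F=F-B$; the paper instead keeps $F$, obtains $F'\le(c_\epsilon-\kappa\xi)F+B'+H$, multiplies by the integrating factor $e^{\int_0^t(\kappa\xi-c_\epsilon)}$, integrates in $t$, and only \emph{then} integrates $\int_0^t e^{\mu}B'\,ds$ by parts. The two maneuvers are algebraically equivalent --- if you multiply your $\mathcal F$--inequality by $e^{\mu}$ and integrate you land exactly on the paper's \eqref{final} --- but the paper's ordering has a concrete payoff: after the IBP one simply bounds $|B|\le\epsilon c\|\nabla u\|^2+(\text{$h$--terms})\le 2\epsilon c\,F+(\text{$h$--terms})$ and obtains directly the \emph{integral} inequality $(1-2\epsilon c)\tilde F(t)\le 2\epsilon c\int_0^t(\kappa\xi-c_\epsilon)\tilde F\,ds+\tilde H(t)$ for $\tilde F=e^{\mu}F$, whereupon a single application of Gronwall produces the precise exponent $\tfrac{2\epsilon c}{1-2\epsilon c}-1$ and the constant $\alpha_3=\alpha_2/(1-2\epsilon c)$. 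Your heuristic ``$F=(1+\tfrac{2\epsilon c}{1-2\epsilon c})\mathcal F+\widehat H_B$'' is not a valid pointwise identity, and the sign--dependent substitution you propose is unnecessary: the exact constants of \eqref{modenest}, \eqref{al3}, \eqref{Htilde} come from the Gronwall step on $\tilde F$, not from a two--sided comparison of $\mathcal F$ and $\varepsilon_g$. Replace that last paragraph of your argument by the paper's integrate--then--IBP--then--Gronwall sequence and your proof goes through verbatim.
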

\begin{proof}
	Since $g$ is positive and $g(0)>0$ then for any $t_0 > 0 $, we have
	
	\begin{align*}
		\int_0^t g(s)ds \geq \int_0^{t_0} g(s)ds=g_0, \ \ \ \ \forall t >t_0.
	\end{align*}
	By using (\ref{fdef}), (\ref{e3}), (\ref{pissi}) and (\ref{kayy}),  we obtain for $t\geq t_0$

	\begin{align}\label{f}
		\begin{split}
			F'(t)&\leq - \alpha \xi(t)\int_{\Omega} u_t^2 dx -\beta \xi(t)\int_{\Omega} |\nabla u|^2 dx +\Big(\epsilon c(1+ 2g(0))-\frac{1}{2}g(t)\Big) \int_{\Omega} |\nabla u|^2 dx \\ &\quad + \gamma (g'\circ \nabla u)(t)+\zeta \xi(t) (g\circ \nabla u)(t) +H(t)\\ &\quad +\frac{d}{dt}\bigg( \int_{\Gamma_{1}}u(t)h(t)d\Gamma-\int_{\Gamma_{1}}u(t)\int_{0}^{t}g(t-s)h(s)dsd\Gamma \bigg )
		\end{split}
	\end{align}
	where
	\begin{align*}
		\alpha := \epsilon_2(g_0-\delta(1+k))-c\epsilon_1,\\
		\beta:=\epsilon_1 \frac{L(2-L)}{4} -\epsilon_2 \delta \Big( 1+2(1-L)^2 \Big),\\
		\zeta:=c\epsilon_1+c\epsilon_2,
		\gamma :=\frac{1}{2}-c\epsilon_2 M,
	\end{align*}
	and
	\begin{align*}
		H(t)&:= \bigg(\frac{1}{4\epsilon}g(0)+ c\epsilon_{1}\xi(t)+c\epsilon_{2}\xi(t)\bigg)\| h(t)\|_{L^2(\Gamma_{1})}^2+\frac{1}{4\epsilon}\| h_{t}(t)\|_{L^2(\Gamma_{1})}^2\\& \quad+c\epsilon_{1}\xi(t)\int_{0}^{t}g(t-s)\| h(s)\|_{L^2(\Gamma_{1})}^2 ds  -\frac{1}{4\epsilon}\int_0^tg'(t-s)\| h(s)\|_{L^2(\Gamma_{1})}^2ds\\
		&+c\epsilon_{2}\xi(t)(g\circ_{\Gamma_{1}} h)(t).
	\end{align*}
	Since $g$ is positive,  $$\epsilon c(1+ 2g(0))-\frac{1}{2}g(t)\leq  \epsilon c(1+ 2g(0)).$$
	Let us denote
	\begin{align}\label{c_eps}
		c_\epsilon:=\epsilon c(1+ 2g(0)).
	\end{align}
	On the other hand, using  $g'(t)\leq - \xi (t)g(t)$ and the fact that $\xi(t)$ is nonincreasing, we obtain
	\begin{align}
		\gamma (g'\circ \nabla u)(t) +\zeta \xi(t) (g\circ \nabla u)(t)=-(\gamma-\zeta)\xi(t)(g\circ \nabla u)(t).
	\end{align}
	Here we choose $\epsilon_1, \epsilon_2$ such that
	$\alpha , \beta > 0,$ and $\gamma-\zeta > 0 $,
	and also set $\kappa:= \min \{\alpha, \beta, (\gamma-\zeta) \}$. Then, (\ref{f}) yields
	\begin{align}\label{fenq}
		\begin{split}
			F'(t)\leq &\Big(c_\epsilon -  \kappa \xi(t)\Big) F(t)\\
			&+\frac{d}{dt}\bigg( \int_{\Gamma_{1}}u(t)h(t)d\Gamma-\int_{\Gamma_{1}}u(t)\int_{0}^{t}g(t-s)h(s)dsd\Gamma \bigg )+ H(t).
		\end{split}
	\end{align}
	Now, we multiply (\ref{fenq}) by $e^{ \int_{0}^{t}(\kappa\xi(s)-c_\epsilon) ds}$ and integrate over $(0,t)$, obtaining
	\begin{align}\label{fenq2}
		\begin{split}
			\int_{0}^{t}&e^{ \int_{0}^{s}(\kappa\xi(\tau)-c_\epsilon)d\tau}\Big(F'(s)+(\kappa\xi(s)-c_\epsilon) F(s)\Big)ds\\
			\leq &\int_{0}^{t}e^{\int_{0}^{s}(\kappa\xi(\tau)-c_\epsilon)d\tau} \frac{d}{ds}\bigg(\int_{\Gamma_1}h(s)u(s)d\Gamma \bigg)ds \\ 
			& -\int_{0}^{t}e^{\int_{0}^{s}(\kappa\xi(\tau)-c_\epsilon)d\tau} \frac{d}{ds}\bigg(\int_{\Gamma_1}u(s)\int_{0}^{s}g(s-\tau)h(\tau)d\tau d\Gamma  \bigg)ds\\
			&+ \int_{0}^{t}e^{\int_{0}^{s}(\kappa\xi(\tau)-c_\epsilon)d\tau}H(s)ds.
		\end{split}
	\end{align}
	Then, (\ref{fenq2}) yields
	
	\begin{align}\label{ex}
		\begin{split}
			F(t)e^{\int_{0}^{t}(\kappa\xi(s)-c_\epsilon)ds}&\leq F(0)+\int_{0}^{t}e^{\int_{0}^{s}(\kappa\xi(\tau)-c_\epsilon)d\tau} \frac{d}{ds}\bigg(\int_{\Gamma_1}h(s)u(s)d\Gamma \bigg)ds \\ & \quad-\int_{0}^{t}e^{\int_{0}^{s}(\kappa\xi(\tau)-c_\epsilon)d\tau} \frac{d}{ds}\bigg(\int_{\Gamma_1}u(s)\int_{0}^{s}g(s-\tau)h(\tau)d\tau d\Gamma  \bigg)ds\\ &
			\quad +\int_{0}^{t}e^{\int_{0}^{s}(\kappa\xi(\tau)-c_\epsilon)d\tau}H(s)ds.
		\end{split}
	\end{align}
	Rewriting the second and the third terms  of (\ref{ex}) on the right hand side as follows
	\begin{align}\label{est1}
		\begin{split}
			\int_{0}^{t}e^{\int_{0}^{s}(\kappa\xi(\tau)-c_\epsilon)d\tau} &\frac{d}{ds}\bigg(\int_{\Gamma_1}h(s)u(s)d\Gamma \bigg)ds \\
			& = e^{\int_{0}^{t}(\kappa\xi(s)-c_\epsilon)ds}\int_{\Gamma_1}h(t)u(t)d\Gamma -\int_{\Gamma_1}u^0 h(0)d\Gamma\\
			& \quad-\int_{0}^{t}(\kappa \xi(s)-c_\epsilon)e^{\int_{0}^{s}(\kappa\xi(\tau)-c_\epsilon)d\tau}\int_{\Gamma_1}h(s)u(s)d\Gamma ds,
		\end{split}
	\end{align}
	
	\begin{align}\label{est2}
		\begin{split}
			\int_{0}^{t}&e^{\int_{0}^{s}(\kappa\xi(\tau)-c_\epsilon)d\tau} \frac{d}{ds}\bigg(\int_{\Gamma_1}u(s)\int_{0}^{s}g(s-\tau)h(\tau)d\tau d\Gamma  \bigg)ds\\
			&=e^{\int_{0}^{t}(\kappa\xi(s)-c_\epsilon)ds}\int_{\Gamma_1}u(t)\int_{0}^{t}g(t-s)h(s)ds d\Gamma \\ &\quad- \int_{0}^{t}(\kappa\xi(s)-c_\epsilon)e^{\int_{0}^{s}(\kappa\xi(\tau)-c_\epsilon)d\tau}\int_{\Gamma_1}u(s)\int_{0}^{s}g(s-\tau)h(\tau)d\tau d\Gamma ds
		\end{split}
	\end{align}
	and setting $\tilde{F}(t):=F(t)e^{\int_{0}^{t}(\kappa\xi(s)-c_\epsilon)ds}$, we derive
	
	\begin{align}\label{final}
		\begin{split}
			\tilde{F}(t)&\leq F(0)+ e^{\int_{0}^{t}(\kappa\xi(s)-c_\epsilon)ds}\int_{\Gamma_1}h(t)u(t)d\Gamma -\int_{\Gamma_1}u^0 h(0)d\Gamma\\
			& \quad-\int_{0}^{t}(\kappa \xi(s)-c_\epsilon)e^{\int_{0}^{s}(\kappa\xi(\tau)-c_\epsilon)d\tau}\int_{\Gamma_1}h(s)u(s)d\Gamma ds\\& \quad +e^{\int_{0}^{t}(\kappa\xi(s)-c_\epsilon)ds}\int_{\Gamma_1}u(t)\int_{0}^{t}g(t-s)h(s)ds d\Gamma \\ &\quad- \int_{0}^{t}(\kappa\xi(s)-c_\epsilon)e^{\int_{0}^{s}(\kappa\xi(\tau)-c_\epsilon)d\tau}\int_{\Gamma_1}u(s)\int_{0}^{s}g(s-\tau)h(\tau)d\tau d\Gamma ds \\ & \quad +\int_{0}^{t}e^{\int_{0}^{s}(\kappa\xi(\tau)-c_\epsilon)d\tau}H(s)ds .
		\end{split}
	\end{align}
	Applying $\epsilon-$Young's, trace and Poincaré  inequalities, substituting $F(0)$, and using the fact that the quantity $$e^{\int_{0}^{t}(\kappa\xi(s)-c_\epsilon)ds} \| \nabla u(t)\|_{L^2(\Omega)}^2$$ is contained in $\tilde{F}(t)$, from (\ref{final}), we get
	\begin{align}\label{final2}
		\begin{split}
			(1-2\epsilon c)\tilde{F}(t)&\leq 2\epsilon c \int_{0}^{t}(\kappa\xi(s)-c_\epsilon)\tilde{F}(s) ds +\frac{1}{2}c\|\nabla u^0\|_{L^2(\Omega)}^2+\frac{1}{2}\epsilon_{1}\xi(0)\|u^0\|_{L^2(\Omega)}^2\\
			& \quad  +\frac{1}{2}\Big(1+\epsilon_{1}\xi(0) \Big)\|u^1\|_{L^2(\Omega)}^2    +\int_{0}^{t}e^{\int_{0}^{s}(\kappa\xi(\tau)-c_\epsilon)d\tau}H(s)ds \\
			& \quad +\frac{1}{4\epsilon}e^{\int_{0}^{t}(\kappa\xi(s)-c_\epsilon)ds}\|h(t)\|_{L^2(\Gamma_{1})}^2
			+ \frac{1}{4\epsilon} e^{\int_{0}^{t}(\kappa\xi(s)-c_\epsilon)ds} \tilde{h}(t) \\
			& \quad +\frac{1}{4\epsilon}\int_{0}^{t}(\kappa\xi(s)-c_\epsilon)e^{\int_{0}^{s}(\kappa\xi(\tau)-c_\epsilon)d\tau}\|h(s)\|_{L^2(\Gamma_{1})}^2ds\\
			& \quad  +\frac{1}{4\epsilon}  \int_{0}^{t}(\kappa\xi(s)-c_\epsilon)e^{\int_{0}^{s}(\kappa\xi(\tau)-c_\epsilon)d\tau}\tilde{h}(s)ds
		\end{split}
	\end{align}
	where $$\tilde{h}(t):= \frac{1}{4\epsilon}\Big( (1-L)^{2} +1\Big) \| h(t)\|_{L^2(\Gamma_{1})}^2+\frac{1}{4\epsilon}(1-L)(g\circ_{\Gamma_1}h)(t).$$
	Setting
	\begin{align}\label{Htilde}
		\begin{split}
			\tilde{H}(t)&:=\int_{0}^{t}e^{\int_{0}^{s}(\kappa\xi(\tau)-c_\epsilon)d\tau}H(s)ds +\frac{1}{4\epsilon}e^{\int_{0}^{t}(\kappa\xi(s)-c_\epsilon)ds}\|h(t)\|_{L^2(\Gamma_{1})}^2  \\ & \quad+\frac{1}{4\epsilon}\int_{0}^{t}(\kappa\xi(s)-c_\epsilon)e^{\int_{0}^{s}(\kappa\xi(\tau)-c_\epsilon)d\tau}\|h(s)\|_{L^2(\Gamma_{1})}^2ds\\
			& \quad  + \frac{1}{4\epsilon} e^{\int_{0}^{t}(\kappa\xi(s)-c_\epsilon)ds} \tilde{h}(t)+\frac{1}{4\epsilon}  \int_{0}^{t}(\kappa\xi(s)-c_\epsilon)e^{\int_{0}^{s}(\kappa\xi(\tau)-c_\epsilon)d\tau}\tilde{h}(s)ds+ C_0
		\end{split}
	\end{align}
	where $C_0$ is the positive constant defined by
	\begin{align*}C_0:=\|h^0\|_{L^2(\Gamma_{1})}^2 & +
	\frac{1}{2}\Big(1+c^2 \Big)\|\nabla u^0\|_{L^2(\Omega)}^2+\frac{1}{2}\epsilon_{1}\xi(0)\|u^0\|_{L^2(\Omega)}^2\\
	& +\frac{1}{2}\Big(1+\epsilon_{1}\xi(0) \Big)\|u^1\|_{L^2(\Omega)}^2,
	\end{align*}
	(\ref{final2}) becomes
	\begin{align}
		\tilde{F}(t)\leq \Big(\frac{2\epsilon c}{1-2\epsilon c} \Big) \int_{0}^{t}(\kappa\xi(s)-c_\epsilon) \tilde{F}(s) ds +\Big(\frac{1}{1-2\epsilon c}\Big)\tilde{H}(t).
	\end{align}
	Then, using  Gronwall's Inequality, we obtain
	\begin{align}
		\tilde{F}(t) \leq \Big(\frac{1}{1-2\epsilon c}\Big)   e^{(\frac{2\epsilon c}{1-2\epsilon c})  \int_0^t (\kappa\xi(s)-c_\epsilon)ds}\tilde{H}(t).
	\end{align}
	From the above estimate, we infer
	\begin{align} \label{ss}
		\varepsilon_g(t) \leq \alpha_2 F(t) &\leq \alpha_3 e^{(\frac{2\epsilon c}{1-2\epsilon c}-1)\int_0^t(\kappa\xi(s)-c_\epsilon)ds}\tilde{H}(t),
	\end{align}
	where
	\begin{align}\label{al3}
		\alpha_3:= \alpha_2\Big(\frac{1}{1-2\epsilon c}\Big).
	\end{align}

	Moreover, if $\xi(t)$ is bounded from below by a positive constant, then we can choose $\epsilon$ sufficiently small that $\kappa\xi(s)-c_\epsilon> 0.$ Therefore, (\ref{decayh}) and (\ref{ss}) lead to decay of the energy with decay rates characterized by the  decay rate of the relaxation function $g$ and the boundary datum $h$.
\end{proof}

\section{Numerical experiments for 1D wave equation with a distributed damping}
\subsection{Numerical Solution} \label{NumSolDamped}
In this section, we shall construct the numerical solution of  the following 1D  linear  damped wave equation by using an explicit method.
\begin{align}
	&  u_{tt}-u_{xx} +a(x)u_t=0, \quad x\in (0,L), t\in(0,T)\label{main-damped-num}\\
	&   u(x,0)=\varphi(x), \label{i1-damped-num}\\
	& 	u_{t}(x,0)=\psi(x), \label{i2-damped-num}\\
	& 	u(0,t)=f(t),\label{dirichlet-damped-num}\\
	& 	u_x(L,t)= h(t).\label{neumann-damped-num}
\end{align}
We replace the temporal and spatial domains by a finite number of mesh points:
\begin{equation}\label{tmesh}0=t_0<t_1<...<t_{{N_t}-1}=T,
\end{equation}
\begin{equation}\label{xmesh}0=x_0<x_1<...<x_{{N_x}-1}=L.
\end{equation}
To have uniformly distributed mesh points we introduce constant mesh spacings  $\triangle t>0$ and $\triangle x>0$. Therefore, we simply have 
\begin{equation}x_i=i\triangle x, \ \  i=0,…,N_x-1\end{equation} and \begin{equation}t_n=n\triangle t,\ \  n=0,…,N_t-1.\end{equation}
The solution $u=u(x,t)$ is sought at the mesh points. We will denote the approximation of the exact solution at the mesh point $(x_i,t_n)$ by $u_i^n$, where $i=0,…,N_x-1$ and $n=0,…,N_t-1$. 

We first define the two vectors ($U^0=[U^0_i]_{1\le i\le N_x-2}^T, U^1=[U^1_i]_{1\le i\le N_x-2}^T$) by setting
$U_i^0=\varphi(x_i)$ and
$ U_i^1=U_i^0+  \psi(x_i)\triangle t, $ where the latter is motivated from the numerical version of (\ref{i2-damped-num}), i.e.,
$\displaystyle \frac{U_i^1-U_i^0}{\triangle t}= \psi(x_i).$  Using an explicit method and second order finite difference scheme in time and space for the interior mesh points only, i.e, for $i = 1, ..., N_{x}-2,$ $n=1, ..., N_t-2$, in view of the numerical version of (\ref{main-damped-num}) 
\begin{align}
	\begin{split}
		\frac{ U_{i}^{n+1}-2 U_{i}^{n}+ U_{i}^{n-1}}{(\triangle t)^2}-\frac{ U_{i+1}^{n}-2 U_{i}^{n}+ U_{i-1}^{n}}{(\triangle x)^2}+a_i \frac{U_{i}^{n+1}- U_{i}^{n}}{\triangle t}=0.
	\end{split}
\end{align}
Now, organizing the terms with respect to time steps, we obtain
\begin{align}\label{int_soln_formula-damped-num}
	U_{i}^{n+1}=\Big( \alpha_i U_{i+1}^{n}+ \beta_i U_{i}^{n}+ \alpha_i U_{i-1}^{n}\Big)- \zeta_i U_{i}^{n-1},
\end{align}
where
$\displaystyle
\alpha_i:=\frac{r^2}{1+a_i\triangle t},
\beta_i:=\frac{2-2r^2 +a_i\triangle t}{1+a_i\triangle t},
\zeta_i:=\frac{1}{1+a_i\triangle t},
r:=\frac{\triangle t}{\triangle x}.
$

	We need to incorporate boundary data into the construction of the numerical solution. To this end, we use the information associated with (\ref{dirichlet-damped-num}), and thereby define the quantities
	\begin{equation}\label{U0Dirichlet}U_0^n=f(t_n), \quad n=0,...,N_t-1.\end{equation}
	Writing $i = 1$ in (\ref{int_soln_formula-damped-num}), we obtain
	\begin{align}\label{for_0-damped-num}
U_{1}^{n+1}=\Big( \alpha_1 U_{2}^{n}+ \beta_1 U_{1}^{n}+ \alpha_1 U_{0}^{n}\Big)- \zeta_1 U_{1}^{n-1}.
\end{align}

Now, we numerically construct the Neumann boundary data by using the backward $O(h^2)$ approximation, which is defined by
$$U_x\approx  \frac{U_{i-2}^n-4 U_{i-1}^n+3U_{i}^n}{2 \triangle x}.$$
Therefore, the numerical version of (\ref{neumann-damped-num}) leads us to define the quantities
\begin{align}\label{neumanndata2-damped-num}
U_{N_x-1}^n= \frac{1}{3}( 2\triangle x h(t_n)- U_{N_x-3}^n+4U_{N_x-2}^n).
\end{align}
Writing $i = N_x-2$ in (\ref{int_soln_formula-damped-num}), we obtain
\begin{align}\label{for_L-damped-num}
U_{N_x-2}^{n+1}=\Big( \alpha_{N_x-2} U_{N_x-1}^{n}+ \beta_{N_x-2} U_{N_x-2}^{n}+ \alpha_{N_x-2} U_{N_x-3}^{n}\Big)- \zeta_{N_x-2} U_{N_x-2}^{n-1}.
\end{align}
In  (\ref{for_L-damped-num}), $U_{N_x-1}^{n}$ on the RHS is unknown but it can be eliminated by
substituting $U_{N_x-2}^{n},U_{N_x-3}^{n}$ into  (\ref{for_L-damped-num}) by using (\ref{neumanndata2-damped-num}):
\begin{align}\label{forLbecomes-damped-num}
	\begin{split}
U_{N_x-2}^{n+1}=&\Big((\beta_{N_x-2}+\frac{4}{3}\alpha_{N_x-2})U_{N_x-2}^{n} + \frac{2}{3}\alpha_{N_x-2}  U_{N_x-3}^{n}+\frac{2}{3}\alpha_{N_x-2} \triangle x h(t_n)\Big)\\
&-\zeta_{N_x-2} U_{N_x-2}^{n-1}.
\end{split}
\end{align}

We define the following matrices
\begin{align*}
{K} =
\begin{bmatrix}
	\beta_1   &\alpha_1    &              &           &                                   & \textrm{\huge0} \\
	\alpha_2                &\beta_2    &\alpha_2       &           &                                   & \\
	&\ddots    &\ddots        &\ddots     &                                   & \\
	&          &\ddots        & \ddots    &\ddots                             & \\
	&          &              &\alpha_{N_x-3}    &\beta_{N_x-3}               &\alpha_{N_x-3} \\
	\textrm{\huge0}         &          &              &           &\frac{2}{3}\alpha_{N_x-3}                 &\beta_{N_x-2}+\frac{4}{3}\alpha_{N_x-2} 
\end{bmatrix}\end{align*} \text{ and }\begin{align*}\zeta=\begin{bmatrix}
	\zeta_1   &0    &              &           &                                   & \textrm{\huge0} \\
	0                &\zeta_2    &0       &           &                                   & \\
	&\ddots    &\ddots        &\ddots     &                                   & \\
	&          &\ddots        & \ddots    &\ddots                             & \\
	&          &              &0    &\zeta_{N_x-3}               &0 \\
	\textrm{\huge0}         &          &              &           &0                 &\zeta_{N_x-2}
\end{bmatrix},
\end{align*} and the following vectors
\begin{equation}
\mathbf{D}_n=
\begin{bmatrix}
	\alpha_1 f(t_n) &   0 & ...  &  0
\end{bmatrix}^T\text{ and } 	\mathbf{N}_n=
\begin{bmatrix}
	0&
	.&
	.&
	.&
	0&
	\frac{2}{3}\alpha_{N_x-2} \triangle x h_{N_x-1}^n
\end{bmatrix}^T.
\end{equation}We set
\begin{align}\label{solnmatrixform-damped-num}
\begin{split}
	U^{n+1}=KU^{n}-{\zeta} U^{n-1}+\mathbf{D}_n+\mathbf{N}_n, \quad n=1,...,N_t-2.
\end{split}
\end{align}
Finally, the numerical solution of the initial-boundary value problem is defined by the matrix
$$[u_{in}]_{0\le i\le N_x-1, 0\le n\le N_t-1},$$ where $u_{in}=U_i^n$, $0\le i\le N_x-1, 0\le n\le N_t-1.$ Note that $U_i^n$ can be found by using \eqref{solnmatrixform-damped-num} for $i=1,...,N_x-2$ and $n=0,...,N_t-1$ while $U_0^n$ and $U_{N_x-1}^n$ can be found by using \eqref{U0Dirichlet} and \eqref{neumanndata2-damped-num} for $n=0,...,N_t-1$.

\subsection{Numerical Simulations}

In this section, we present numerical examples involving external Neumann manipulations with varying qualitative behaviours. These include cases where $h$ exhibits exponential decay, oscillation, asymptotic constancy, or growth. We analyze behavior of solutions subject to different types of distributed damping, such as $a(x)=e^{-x}, \sin^2(x), (x+1)^2$.  To explore these scenarios, we consider the following initial-boundary value problem:
\begin{align}\label{nummodel-damped}
	\left\{
	\begin{array}{ll}
		u_{tt}-u_{xx} + a(x)u_{t}=0    \\
		u(x,0)=\cos(x/5),\ \ \  u_{t}(x,0)=0, \\
		u(0,t) =0,\ \ \   u_x(L,t)=h(t) ,   \\
	\end{array}
	\right.
\end{align}
where $x \in[0,10\pi].$ 
\subsubsection*{Case 1: $a(x)=e^{-x}$}
In the following simulation we take $h(t)=e^{-t}$ (an exponentially decaying manipulation which satisfies assumptions of Theorem \ref{TemamMainThm}).

\begin{figure}[H]
	\centering
	\subfloat[Energy Plot, $h(t)=e^{-t}$ and $a(x)=e^{-x}$. \label{h=exp(-t)-damped}]{%
		\resizebox*{7cm}{!}{\includegraphics{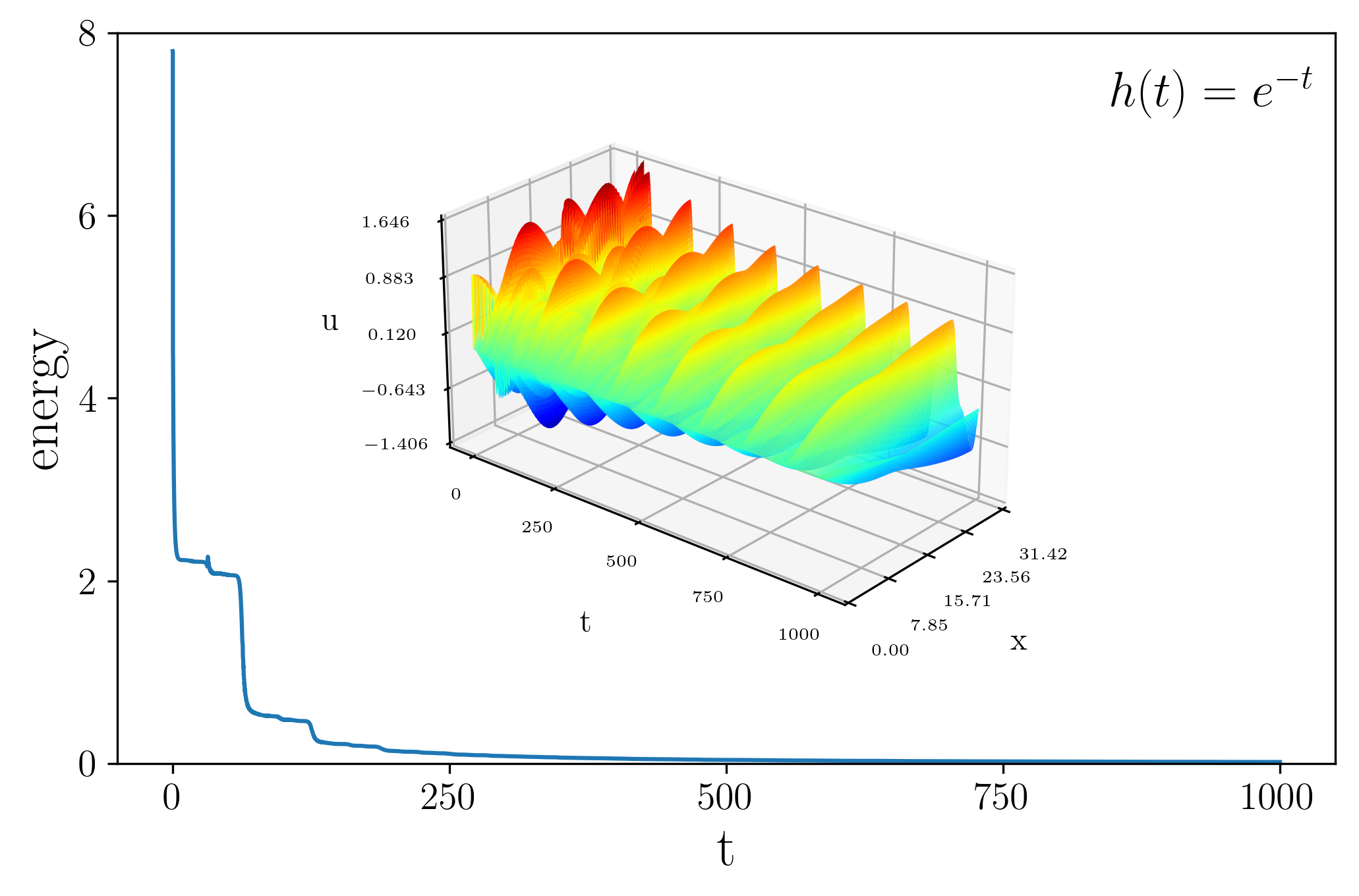}}}\hspace{5pt}
	\subfloat[Energy Plot, $h(t)=\sin(\frac{t}{5})$ and $a(x)=e^{-x}$ \label{h=sinli-damped}]{%
		\resizebox*{7cm}{!}{\includegraphics{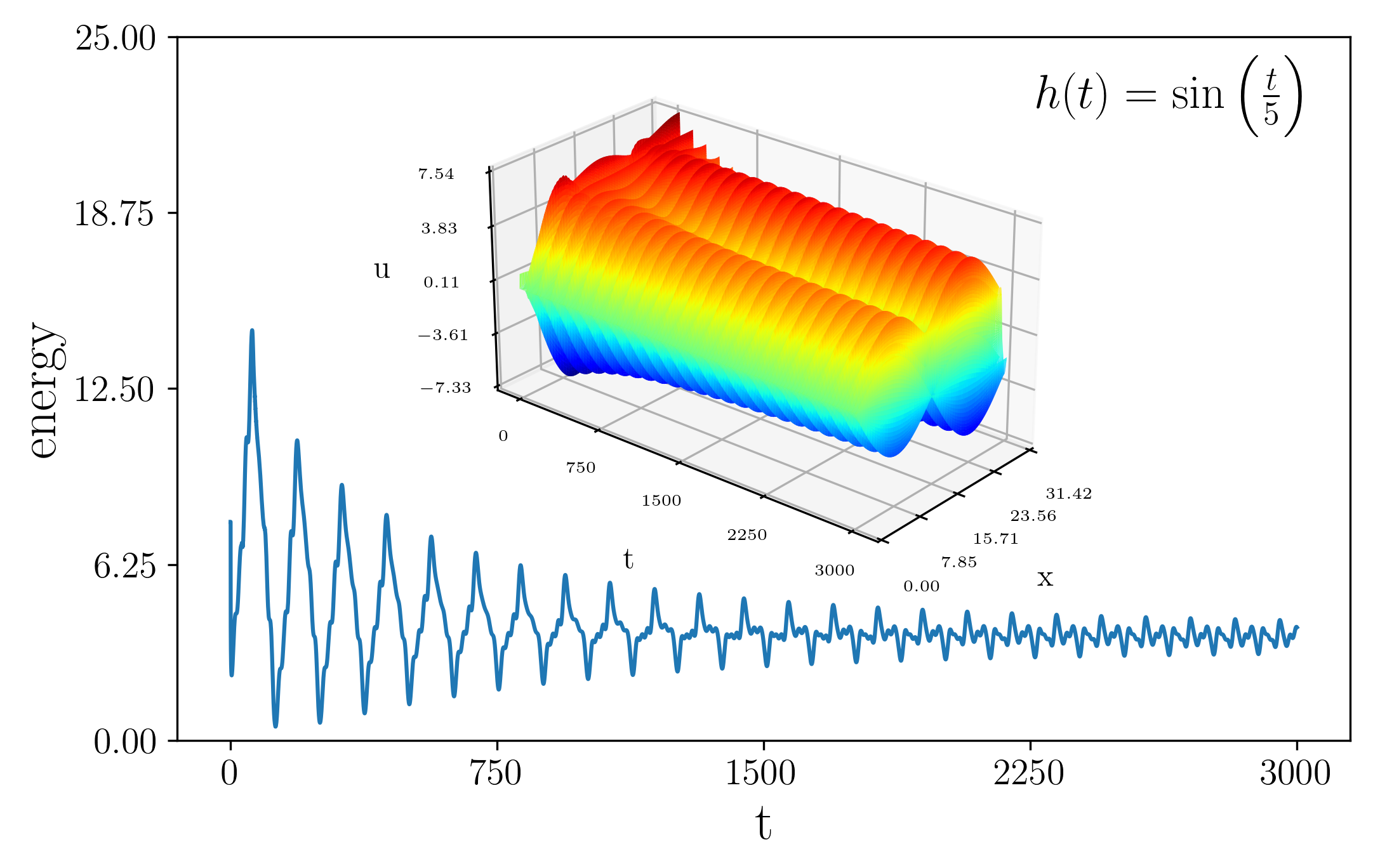}}}\\
	\subfloat[Energy Plot, $h(t)=\frac{5t}{t+1}$ and $a(x)=e^{-x}$.\label{h=5tli-damped}]{%
		\resizebox*{7cm}{!}{\includegraphics{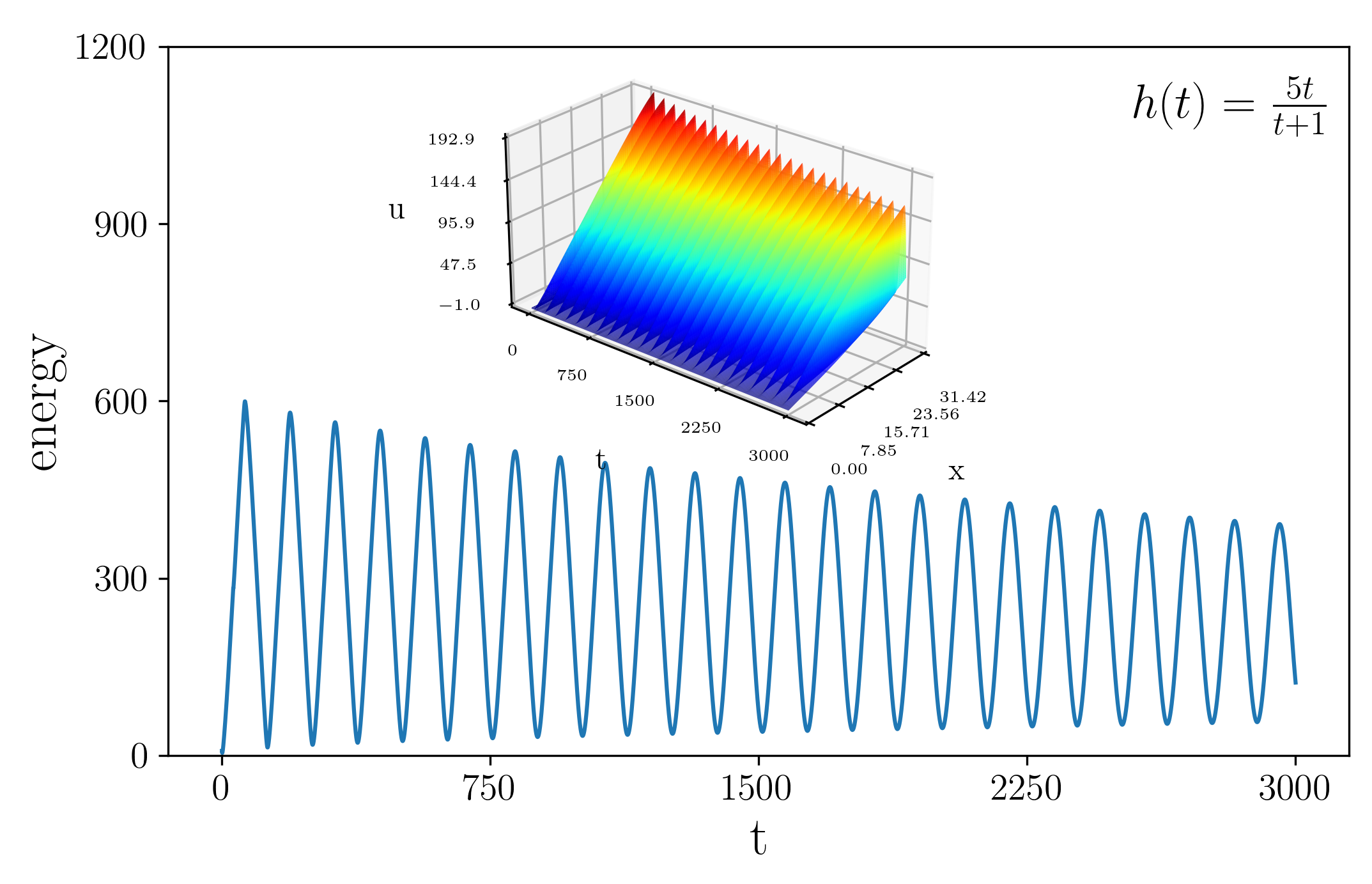}}}
	\subfloat[Energy Plot, $h(t)=\sqrt{t}$ and $a=e^{-x}$ . \label{h=sqrt-damped}]{%
		\resizebox*{7cm}{!}{\includegraphics{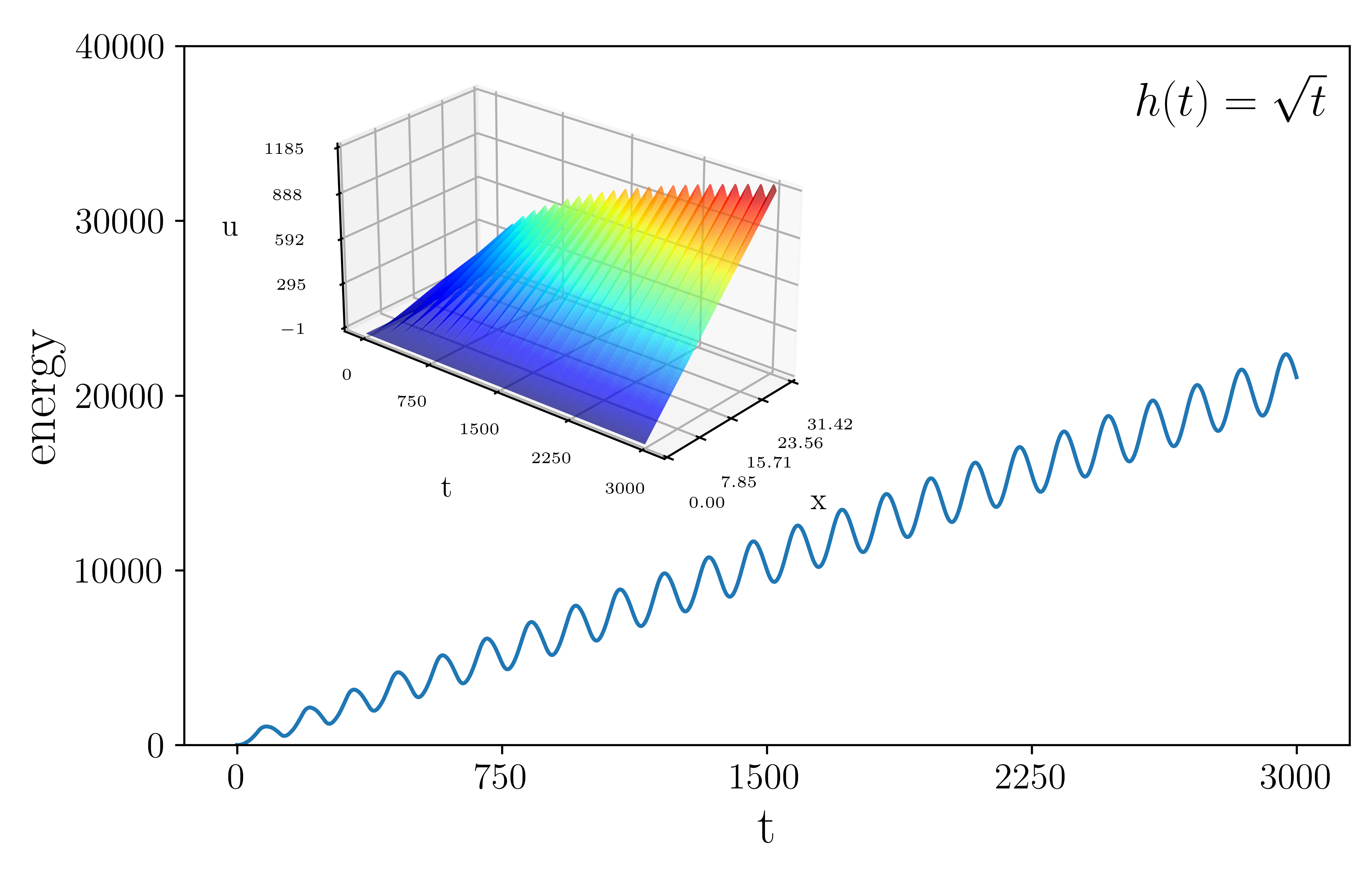}}}\\
	\caption{Linear waves subject to external Neumann manipulation and spatially decaying damping.} \label{sample-figure1}
\end{figure}


\noindent Figure \ref{h=exp(-t)-damped} demonstrates that the energy decays since $h(t)$ decays, with the damping effect clearly visible. This behavior is aligned with the estimate of Theorem \ref{TemamMainThm}.
Next, we present three examples where the  external manipulation does not decay, violating assumption \eqref{h_temam} of Theorem \ref{TemamMainThm}. 


\noindent In Figure \ref{h=sinli-damped}, where $h(t)$ is an oscillating function, the energy also oscillates around a certain level. However, the amplitude of oscillations gradually diminish due to the damping, showing a stabilization effect.


\noindent Figure \ref{h=5tli-damped} shows that, when $h(t)$ asymptotically approaches a constant, so does the average of its energy. Similar to Figure \ref{h=sinli-damped}, the amplitude of oscillations decrease, but in this case the  damping  appears less effective in regard to how fast it can stabilize the wave.



\noindent In Figure \ref{h=sqrt-damped}, we observe that an increasing Neumann manipulation allows energy to be injected into the system from the boundary, causing the damping effect to vanish.

\subsubsection*{Case 2: $a(x)=\sin^2(x)$}
In this section, we examine the scenario where the damping coefficient is a nonnegative oscillating function of the spatial variable. Here, the damping coefficient $a=a(x)$ no longer satisfies the lower bound assumption \eqref{a(x)}. Despite this,  it remains effective in stabilizing solutions (around a certain, possibly nonzero level), provided Neumann input does not (asymptotically) exhibit temporal growth, as seen in Figure \ref{sinkare,hdecay}\,-\,Figure \ref{a=sinkare,h=5tli}.

\begin{figure}[H]
	\centering
	\subfloat[Energy Plot, $h(t)=e^{-t}$ and $a(x)=\sin^2(x)$. \label{sinkare,hdecay}]{%
		\resizebox*{7cm}{!}{\includegraphics{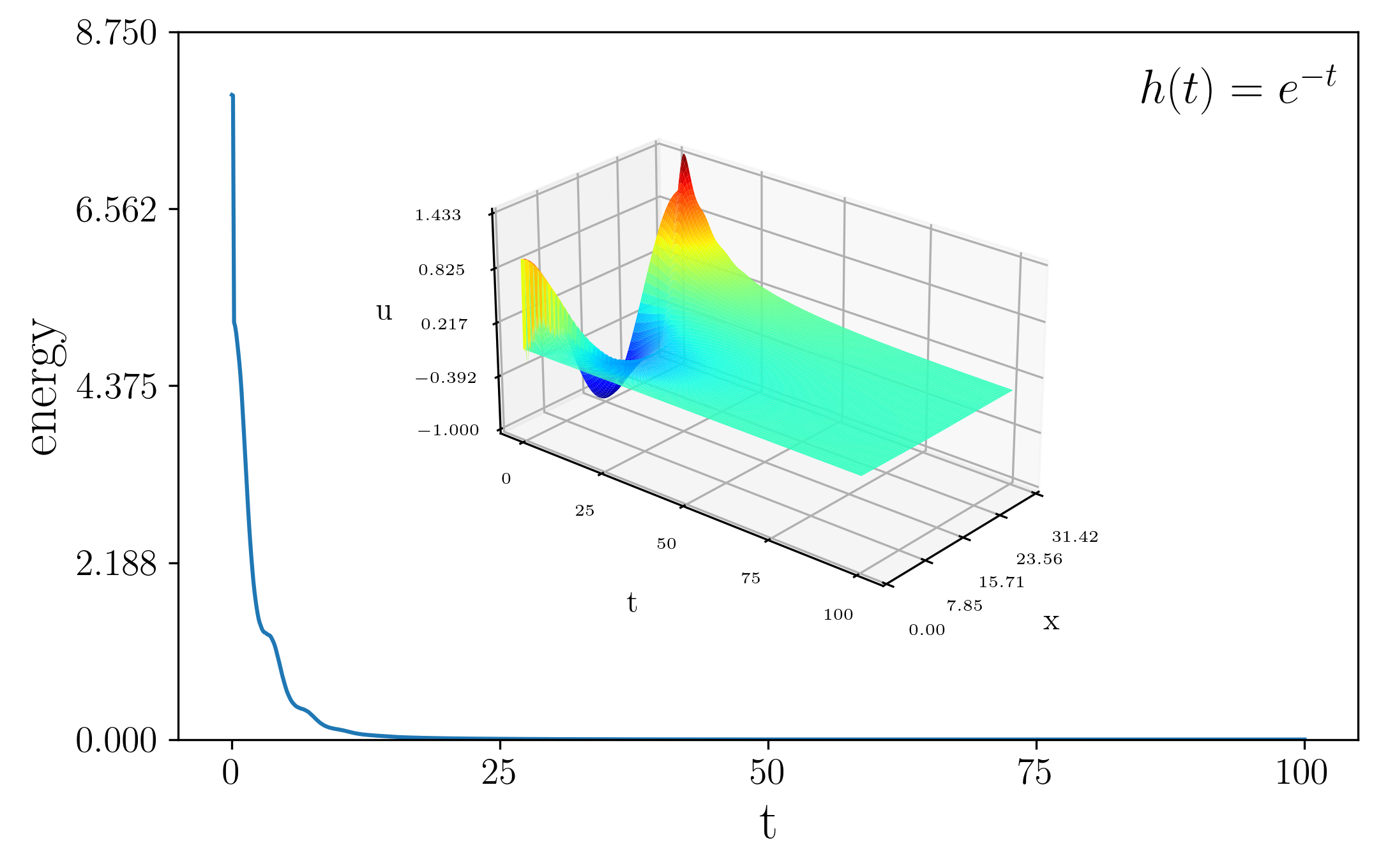}}}\hspace{5pt}
	\subfloat[Energy Plot, $h(t)=\sin(\frac{t}{5})$ and $a(x)=\sin^2(x)$ \label{a=sin2, h=sin}]{%
		\resizebox*{7cm}{!}{\includegraphics{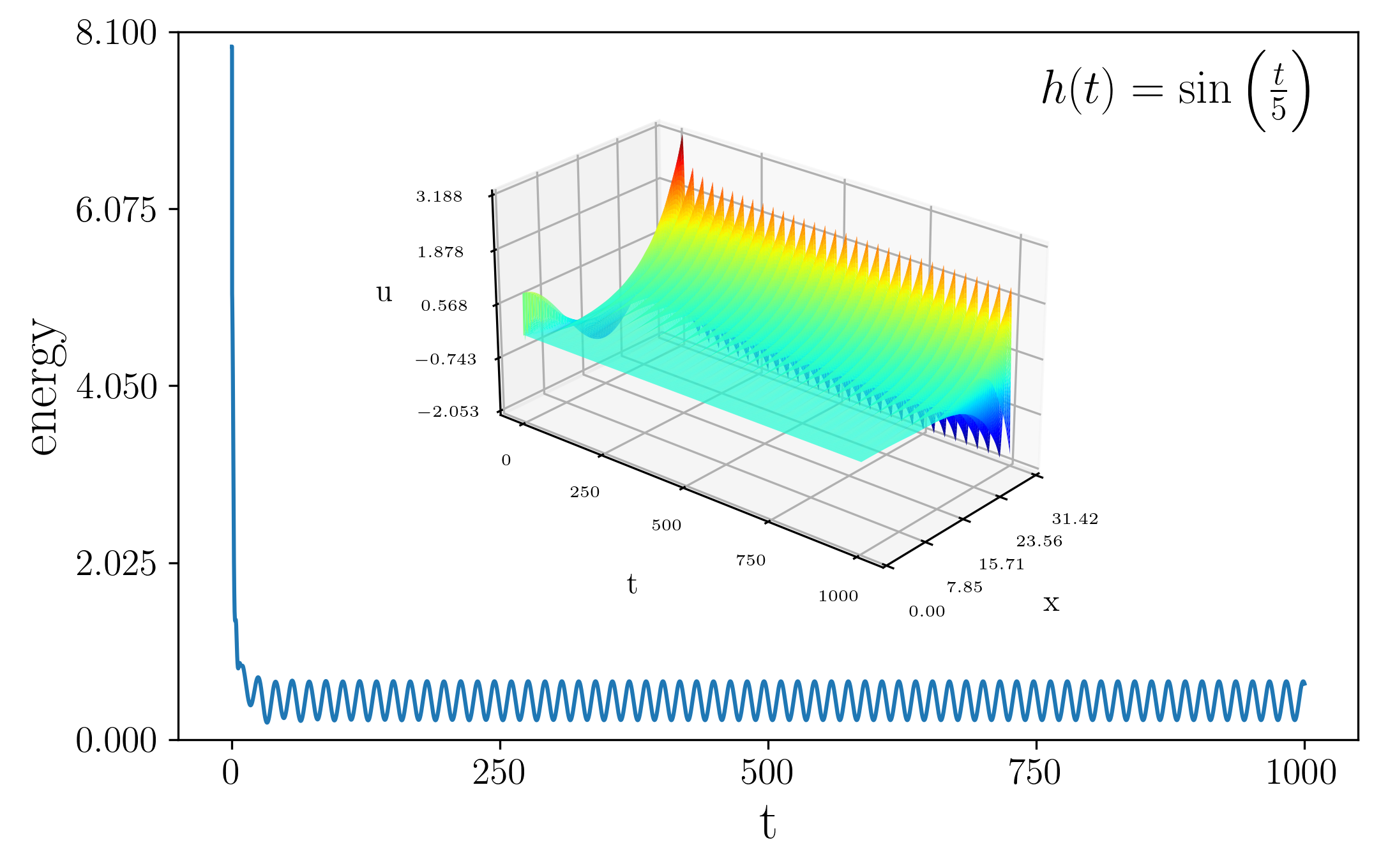}}}\\
	\subfloat[Energy Plot, $h(t)=\frac{5t}{t+1}$ and $a(x)=\sin^2(x)$. \label{a=sinkare,h=5tli}]{%
		\resizebox*{7cm}{!}{\includegraphics{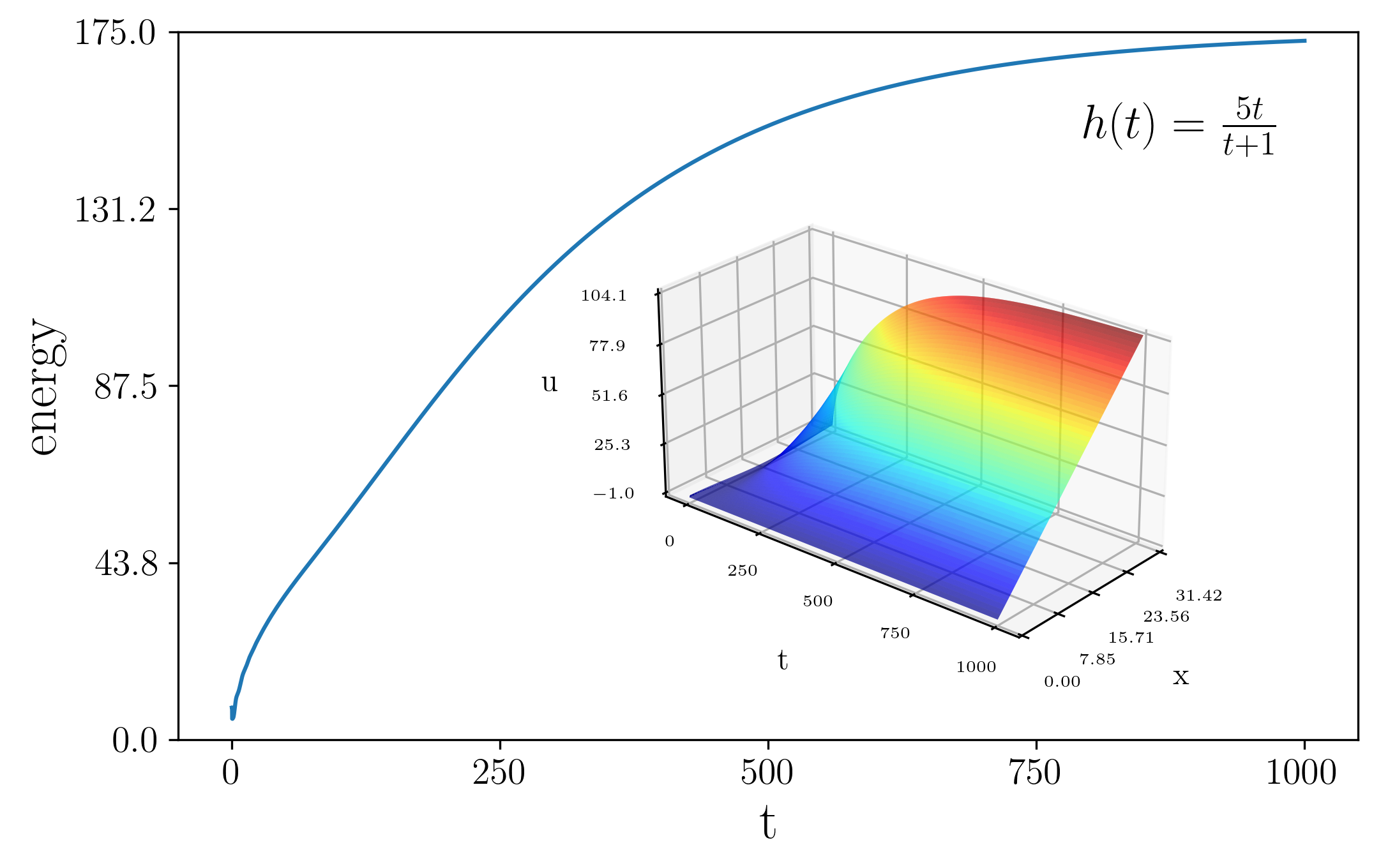}}}
	\subfloat[Energy Plot, $h(t)=\sqrt{t}$ and $a=\sin^2(x)$ . \label{a=sinkare,h=sqrt}]{%
		\resizebox*{7cm}{!}{\includegraphics{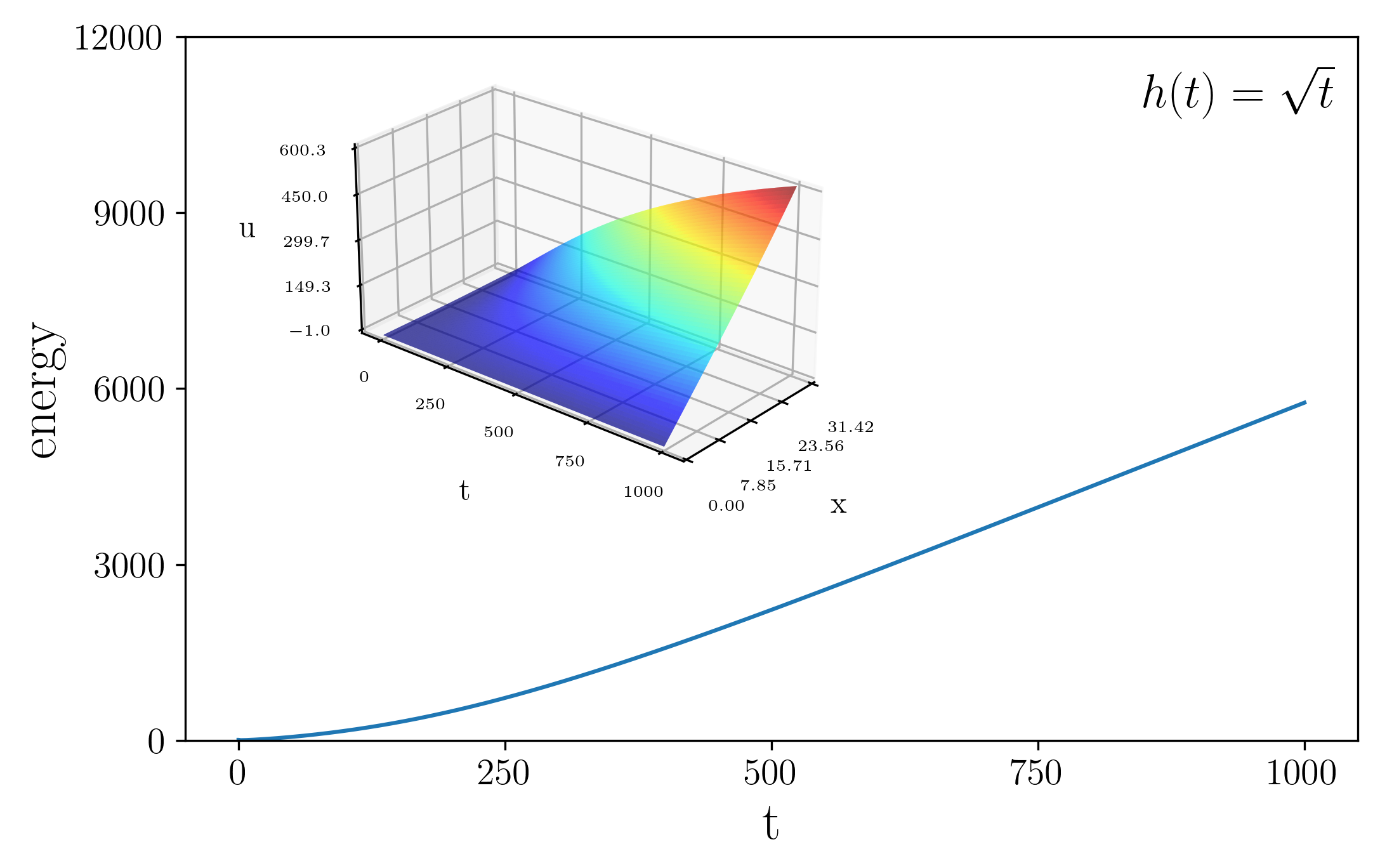}}}\\
	\caption{Linear waves subject to external Neumann manipulation and periodic damping.} \label{sample-figure2}
\end{figure}


\noindent Comparing Figure \ref{sinkare,hdecay} and Figure \ref{h=exp(-t)-damped}, we observe that the new choice of the damping coefficient leads to faster energy decay. 


\noindent In Figure \ref{a=sin2, h=sin}, the energy initially decreases and then oscillates around a nonzero level, the amplitude of oscillations do not seem to get smaller in contrast with Figure \ref{h=sinli-damped}.


\noindent Figure \ref{a=sinkare,h=5tli} shows that, when $h(t)$ asymptotically approaches a constant, so does the energy.  This property is similar to that of Figure \ref{h=5tli-damped} except that the oscillatory behavior observed in Figure \ref{h=5tli-damped} does not occur here.


\noindent Figure \ref{a=sinkare,h=sqrt} exhibits energy growth similar to Figure \ref{h=sqrt-damped}, when $h(t)$ is an increasing function,. However, the oscillatory behavior observed in Figure  \ref{h=sqrt-damped} does not appear here.

\subsubsection*{Case 3: $a(x)=(x+1)^2$}
Here, we examine the scenario where the damping coefficient is a quadratic polynomial.  We repeat the experiments and find that the resulting behaviors are very similar to those of Case 2. However, in terms of the energy levels, the damping effect seems to be much stronger compared to Case 2. 

\begin{figure}[H]
	\centering
	\subfloat[Energy Plot, $h(t)=e^{-t}$, $a(x)=(x+1)^2$. \label{a_large=(x+1)kare,h_decay}]{%
		\resizebox*{7cm}{!}{\includegraphics{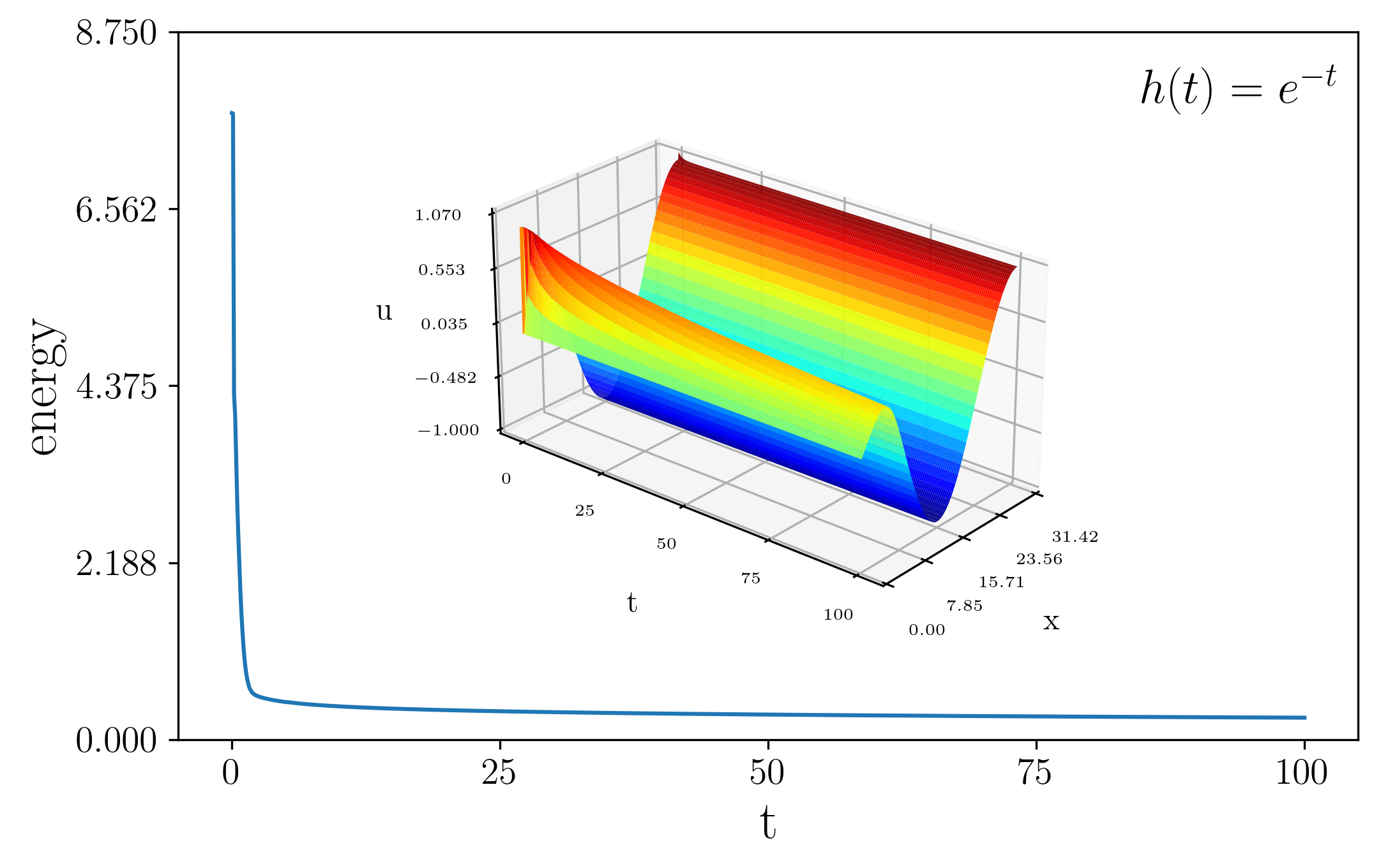}}}\hspace{5pt}
	\subfloat[Energy Plot, $h(t)=\sin(\frac{t}{5})$, $a(x)=(x+1)^2$ \label{}]{%
		\resizebox*{7cm}{!}{\includegraphics{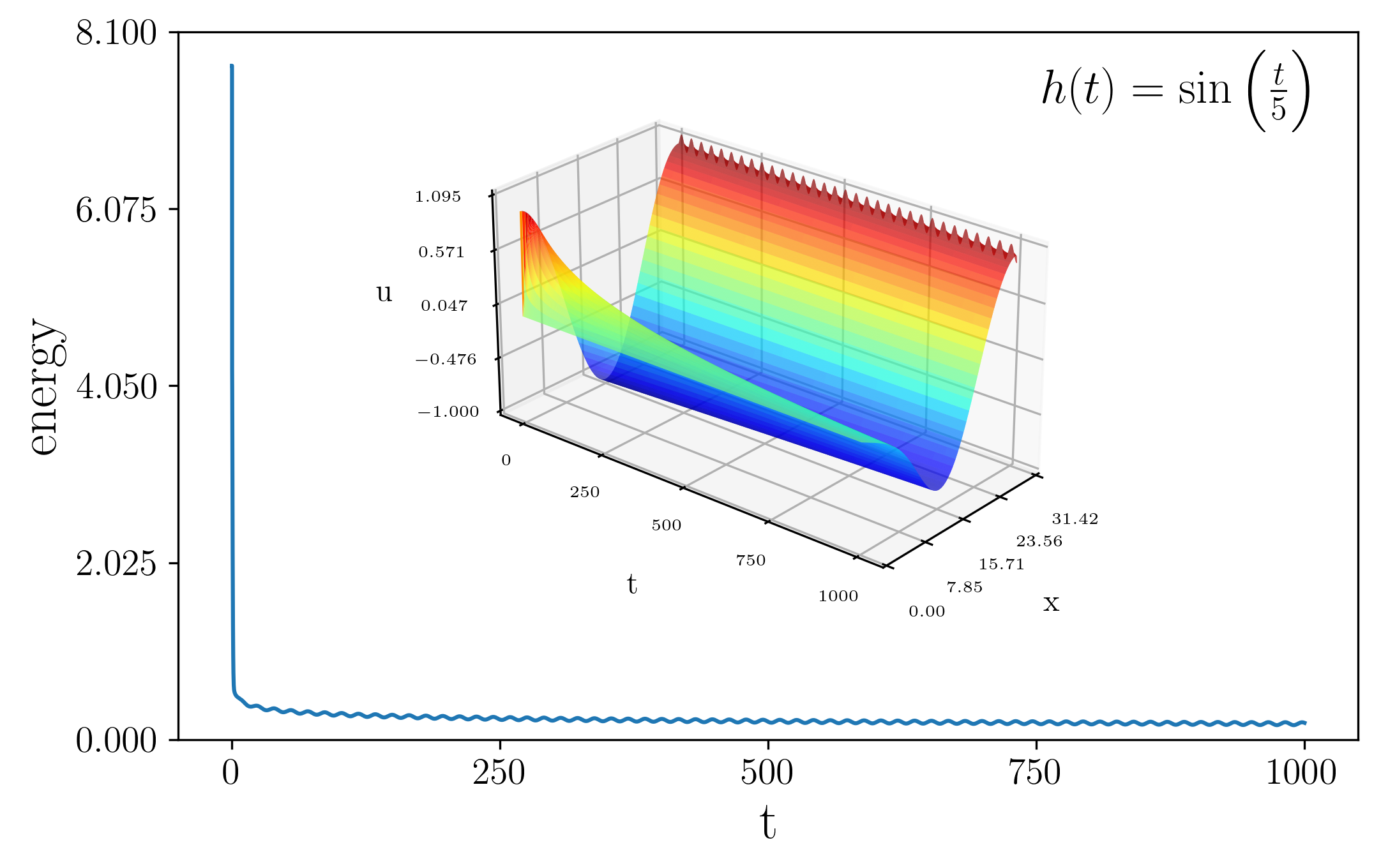}}}\\
	\subfloat[Energy Plot, $h(t)=\frac{5t}{t+1}$, $a(x)=(x+1)^2$. \label{a_large=(x+1)kare,h=5t}]{%
		\resizebox*{7cm}{!}{\includegraphics{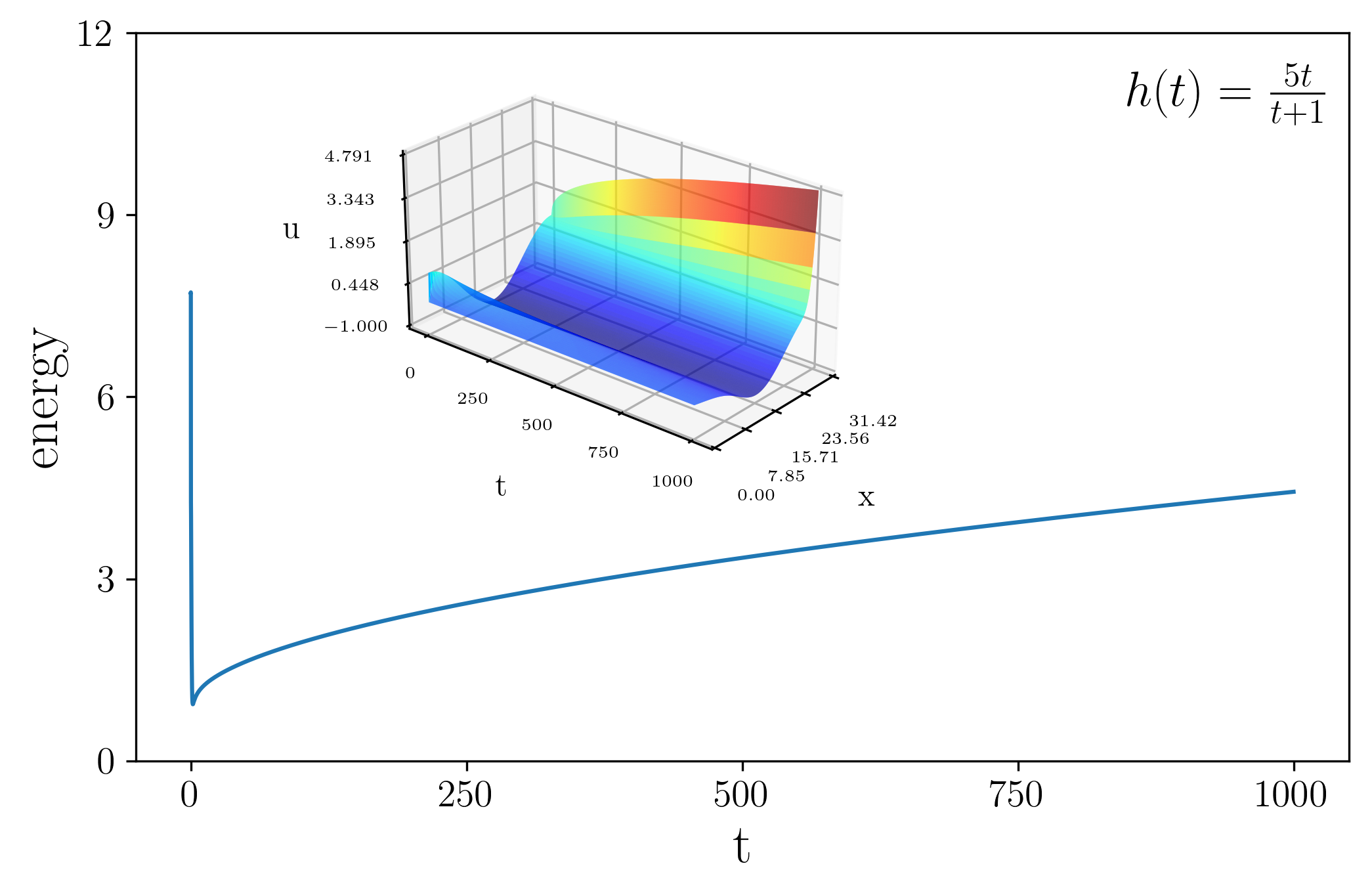}}}
	\subfloat[Energy Plot, $h(t)=\sqrt{t}$, $a(x)=(x+1)^2$ . \label{a_large,h=sqrt}]{%
		\resizebox*{7cm}{!}{\includegraphics{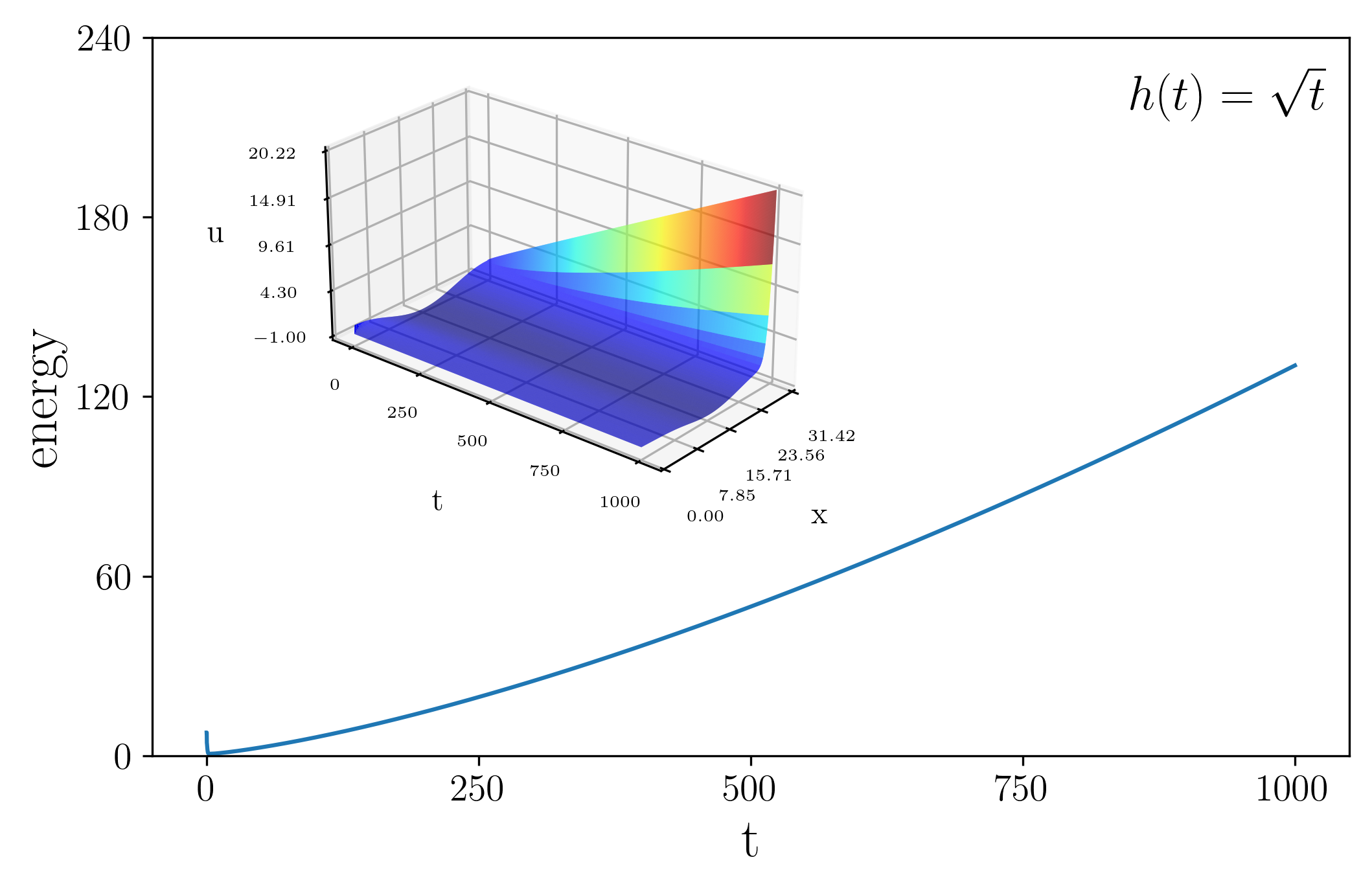}}}\\
	\caption{Linear waves subject to external Neumann manipulation and quadratic damping.} \label{sample-figure3}
\end{figure}

%
%
%

\section{Numerical experiments for 1D wave equation with viscoelasticity}
\subsection{Numerical solution}
In this section we construct the numerical 
solution of the following 1D  linear viscoelastic wave equation  by using the Crank-Nicolson method. We consider the following ibvp
\begin{align}
	&u_{tt}=F(t,u_{xx}):=u_{xx}- \int_{0}^{t}g(t-s)u_{xx}(s)ds,\quad x\in (0,L), t\in(0,T)\label{main}\\
	&u(x,0)=\varphi(x),\label{i1}\\
	&u_{t}(x,0)=\psi(x),\label{i2}\\
	&u(0,t) =f(t),\label{dirichlet}\\
	&u_x(L,t)= h(t).\label{neumann}\\
\end{align}
We adapt to the same space-time discretization of the space-time domain $[0,L]\times [0,T]$ that we defined in Section \ref{NumSolDamped}.  The solution $u=u(x,t)$ will be sought at the mesh points. We introduce the mesh function $u_i^n$, which approximates the exact solution at the mesh point $(x_i,t_n)$ for $i=0,…,N_x-1$ and $n=0,…,N_t-1$.

We define two vectors ($U^0=[U^0_i]_{1\le i\le N_x-2}^T, U^1=[U^1_i]_{1\le i\le N_x-2}^T$) by setting
$U_i^0=\varphi(x_i)$ and
$ U_i^1=U_i^0+  \psi(x_i)\triangle t, $ where the latter is motivated from the numerical version of (\ref{i2}), i.e.,
$\displaystyle \frac{U_i^1-U_i^0}{\triangle t}= \psi(x_i).$

Setting $v=u_t$ and $v_t=F$ and applying the Crank-Nicolson method to both equations, after some algebraic manipulations, one can obtain the following discretization for the function satisfying \eqref{main}:
\begin{align}\label{crank}
	\frac{U_i^{n+1}- 2U_i^n+U_i^{n-1}}{(\triangle t)^2}=\frac{1}{4} \bigg( F_i^{n+1}+2F_i^{n}+F_i^{n-1}\bigg),
\end{align} where $F_i^n=F$ evaluated for $i,n$ and $U_i^n.$
Before applying the Crank-Nicolson method, we replace the integral term by using the  trapezoid rule with its discrete form as follows:
\begin{align}
	\begin{split}
	\int_{0}^{t}g(t-s)u_{xx}(s)ds\approx & \frac{\triangle t}{2}\Big(g(0)u_{xx}(t_n)+g(t_n)u_{xx}(0)\Big)\\
	 &+ \sum_{m=1}^{n-1}g(t_n-t_m) u_{xx}(t_m)\triangle t.
	 \end{split}
\end{align}
Then, applying Crank-Nicolson method for the interior mesh points only, i.e, for  $i=1,…,N_x-2$, $n=1,...,N_t-2$,  (\ref{main}) can be written as follows:
\begin{align}\label{main-num}
	\begin{split}
		\frac{ U_i^{n+1}-2 U_i^{n}+ U_i^{n-1}}{(\triangle t)^2}=& \frac{1}{4} \frac{ U_{i+1}^{n+1}-2 U_i^{n+1}+ U_{i-1}^{n+1}}{(\triangle x)^2}+\frac{1}{2}\frac{ U_{i+1}^{n}-2 U_i^{n}+ U_{i-1}^{n}}{(\triangle x)^2}\\
		&+\frac{1}{4}\frac{ U_{i+1}^{n-1}-2 U_i^{n-1}+ U_{i-1}^{n-1}}{(\triangle x)^2} - I,
	\end{split}
\end{align}
where
\begin{align}\label{integralterm}
	\begin{split}
		I&:=\frac{\triangle t}{8} \bigg(g(0) \frac{ U_{i+1}^{n+1}-2 U_i^{n+1}+ U_{i-1}^{n+1}}{(\triangle x)^2}+g(t_{n+1}) \frac{ U_{i+1}^{0}-2 U_i^{0}+ U_{i-1}^{0}}{(\triangle x)^2}                     \bigg)\\
		&+ \frac{\triangle t}{4}\overbrace{g(t_{n+1}-t_{n})}^{g(t_1)}\frac{ U_{i+1}^{n}-2 U_i^{n}+ U_{i-1}^{n}}{(\triangle x)^2}\\
		&+ \frac{\triangle t}{4}\overbrace{g(t_{n+1}-t_{n-1})}^{g(t_2)}\frac{ U_{i+1}^{n-1}-2 U_i^{n-1}+ U_{i-1}^{n-1}}{(\triangle x)^2}\\
		&+\frac{\triangle t}{4} \bigg(g(0) \frac{ U_{i+1}^{n}-2 U_i^{n}+ U_{i-1}^{n}}{(\triangle x)^2}+g(t_n) \frac{ U_{i+1}^{0}-2 U_i^{0}+ U_{i-1}^{0}}{(\triangle x)^2} \bigg)  \\
		&+\frac{\triangle t}{2}\overbrace{g(t_n-t_{n-1})}^{g(t_1)}\frac{ U_{i+1}^{n-1}-2 U_i^{n-1}+ U_{i-1}^{n-1}}{(\triangle x)^2}\\
		&+\frac{\triangle t}{8} \bigg(g(0) \frac{ U_{i+1}^{n-1}-2 U_i^{n-1}+ U_{i-1}^{n-1}}{(\triangle x)^2}+g(t_{n-1}) \frac{ U_{i+1}^{0}-2 U_i^{0}+ U_{i-1}^{0}}{(\triangle x)^2} \bigg)\\
		&+\frac{\triangle t}{4}\sum_{m=1}^{n-2}\Big( g(t_{n+1}-t_m)+2g(t_n-t_m)+g(t_{n-1}-t_m)  \Big)\frac{U_{i+1}^{m}-2 U_i^{m}+ U_{i-1}^{m}} {(\triangle x)^2}.
	\end{split}
\end{align}
Solving for the $(n+1)$'th time step  in (\ref{main-num}), we obtain
\begin{align}\label{interiorsoln form}
	\begin{split}
		&-(1-a)U_{i+1}^{n+1}+2\Big(1+\frac{2}{r^2}-a\Big)U_i^{n+1}-(1-a)U_{i-1}^{n+1}=\\
		&2\bigg((1-b)U_{i+1}^{n}-2\Big(1-\frac{2}{r^2}-b\Big)U_i^{n}+(1-b)U_{i-1}^{n}\bigg)\\
		&+\bigg((1-c)U_{i+1}^{n-1}-2\Big(1+\frac{2}{r^2}-c\Big)U_i^{n-1}+(1-c)U_{i-1}^{n-1}\bigg)\\
		&- \delta \Big( g(t_{n-1})+2g(t_n)+g(t_{n+1})\Big)(U_{i+1}^{0}-2 U_i^{0}+ U_{i-1}^{0})\\
		&-2\delta \sum_{m=1}^{n-2} \Big( g(t_{n-1}-t_m)+2g(t_n-t_m)+g(t_{n+1}-t_m)\Big)(U_{i+1}^{m}-2 U_i^{m}+ U_{i-1}^{m}),
	\end{split}
\end{align}
where 
$
r :=\frac{\triangle t}{\triangle x},
\delta :=\frac{1}{2}\triangle t, 
a := \delta g(0), 
b := \delta(g(0)+g(t_1)),
c := \delta (g(0)+4g(t_1)+2g(t_2)).
$

We incorporate boundary data into the construction of the numerical solution by setting
\begin{equation}\label{U0Dirichlet-damped}U_0^n=f(t_n), \quad n=0,...,N_t-1.
\end{equation}
Writing $i = 1$ in (\ref{interiorsoln form}), we obtain
\begin{align}\label{i=1}
	\begin{split}
		&-(1-a)U_{2}^{n+1}+2\Big(1+\frac{2}{r^2}-a\Big)U_{1}^{n+1}-(1-a)U_{0}^{n+1}\\
		&=2\bigg((1-b)U_{2}^{n}-2\Big(1-\frac{2}{r^2}-b\Big)U_{1}^{n}+(1-b)U_{0}^{n}\bigg)\\
		&+\bigg((1-c)U_{2}^{n-1}-2\Big(1+\frac{2}{r^2}-c\Big)U_{1}^{n-1}+(1-c)U_{0}^{n-1}\bigg)\\
		&- \delta \Big( g(t_{n-1})+2g(t_n)+g(t_{n+1})\Big)(U_{2}^{0}-2 U_{1}^{0}+ U_{0}^{0})\\
		&-2\delta \sum_{m=1}^{n-2} \Big( g(t_{n-1}-t_m)+2g(t_n-t_m)+g(t_{n+1}-t_m)\Big)(U_{2}^{m}-2 U_{1}^{m}+ U_{0}^{m}).
	\end{split}
\end{align}
Now, we will numerically construct the Neumann boundary data by using the backward $O(h^2)$ approximation, which is defined by
$$U_x\approx  \frac{U_{i-2}^n-4 U_{i-1}^n+3U_{i}^n}{2 \triangle x}.$$
Therefore, the numerical version of (\ref{neumann}) leads us to define the quantities
\begin{align}\label{neumanndata}
	U_{N_x-1}^n= \frac{1}{3}( 2\triangle x h(t_n)- U_{N_x-3}^n+4U_{N_x-2}^n).
\end{align}
Writing $i=N_x-2$ in (\ref{interiorsoln form}) we obtain
\begin{align}\label{forL}
	\begin{split}
		&-(1-a)U_{N_x-1}^{n+1}+2\Big(1+\frac{2}{r^2}-a\Big)U_{N_x-2}^{n+1}-(1-a)U_{N_x-3}^{n+1}\\
		&=2\bigg((1-b)U_{N_x-1}^{n}-2\Big(1-\frac{2}{r^2}-b\Big)U_{N_x-2}^{n}+(1-b)U_{N_x-3}^{n}\bigg)\\
		&+\bigg((1-c)U_{N_x-1}^{n-1}-2\Big(1+\frac{2}{r^2}-c\Big)U_{N_x-2}^{n-1}+(1-c)U_{N_x-3}^{n-1}\bigg)\\
		&- \delta \Big( g(t_{n-1})+2g(t_n)+g(t_{n+1})\Big)(U_{N_x-1}^{0}-2 U_{N_x-2}^{0}+ U_{N_x-3}^{0})\\
		&-2\delta \sum_{m=1}^{n-2} \Big( g(t_{n-1}-t_m)+2g(t_n-t_m)+g(t_{n+1}-t_m)\Big)(U_{N_x-1}^{m}-2 U_{N_x-2}^{m}+ U_{N_x-3}^{m}).
	\end{split}
\end{align}
In (\ref{forL}), $U_{N_x-1}^{n+1},U_{N_x-1}^{n},U_{N_x-1}^{n-1},U_{N_x-1}^{0},U_{N_x-1}^{m}$ are unknown but can be eliminated by using (\ref{neumanndata}) so that we derive
\begin{align*}\label{forLbecomes}
	\begin{split}
		& \Big(\frac{4}{r^2}+\frac{2}{3}(1-a)\Big)U_{N_x-2}^{n+1}-\frac{2}{3}(1-a)U_{N_x-3}^{n+1}\\
		&=2\bigg(\Big(\frac{4}{r^2}-\frac{2}{3}(1-b)\Big)U_{N_x-2}^{n}+\frac{2}{3}(1-b)U_{N_x-3}^{n}\bigg)\\
		&+\bigg( -\Big(\frac{4}{r^2}+\frac{2}{3}(1-c)\Big)U_{N_x-2}^{n-1}+\frac{2}{3}(1-c)U_{N_x-3}^{n-1}\bigg)\\
		&- \delta \Big( g(t_{n-1})+2g(t_n)+g(t_{n+1})\Big) \Big(\frac{2}{3}( \triangle x h_{N_x-1}^0-U_{N_x-2}^{0}+ U_{N_x-3}^{0} )\Big)\\
		&-2\delta \sum_{m=1}^{n-2} \Big( g(t_{n-1}-t_m)+2g(t_n-t_m)+g(t_{n+1}-t_m)\Big)\Big(\frac{2}{3}( \triangle h_{N_x-1}^m-U_{N_x-2}^{m}+ U_{N_x-3}^{m} )\Big)\\
		&+\frac{2 \triangle x}{3} \Big( (1-a)h_{N_x-1}^{n+1}+2(1-b)h_{N_x-1}^{n}+(1-c)h_{N_x-1}^{n-1} \Big).
	\end{split}
\end{align*}
We introduce the following matrices
\begin{align*}
	K =
	\begin{bmatrix}
		2(1+\frac{2}{r^2}-a)   &-(1-a)    &              &           &                                   & \textrm{\huge0} \\
		-(1-a)                 &\ddots    &\ddots        &           &                                   & \\
		&\ddots    &\ddots        &\ddots     &                                   & \\
		&          &\ddots        & \ddots    &\ddots                             & \\
		&          &              &-(1-a)     &2(1+\frac{2}{r^2}-a)               &-(1-a) \\
		\textrm{\huge0}         &          &              &           &-\frac{2}{3} (1-a)                 &(\frac{4}{r^2}+\frac{2}{3}(1-a))
	\end{bmatrix},
\end{align*}

\begin{align*}
	K' =
	\begin{bmatrix}
		-2(1-\frac{2}{r^2}-b)  & (1-b)    &              &           &                                     & \textrm{\huge0} \\
		(1-b)                  &\ddots    &\ddots        &           &                                     & \\
		&\ddots    &\ddots        &\ddots     &                                     & \\
		&          &\ddots        & \ddots    &\ddots                               & \\
		&          &              &(1-b)      &-2(1-\frac{2}{r^2}-b)                &(1-b) \\
		\textrm{\huge0}         &          &              &           &\frac{2}{3}(1-b)                     &(\frac{4}{r^2}-\frac{2}{3}(1-b) )
	\end{bmatrix},
\end{align*}

\begin{align*}
	K'' =
	\begin{bmatrix}
		-2(1+\frac{2}{r^2}-c)  & (1-c)    &              &           &                                     & \textrm{\huge0} \\
		(1-c)                   &\ddots    &\ddots        &           &                                     & \\
		&\ddots    &\ddots        &\ddots     &                                     & \\
		&          &\ddots        & \ddots    &\ddots                               & \\
		&          &              &(1-c)      &-2(1+\frac{2}{r^2}-c )                &(1-c)  \\
		\textrm{\huge0}         &          &              &           &\frac{2}{3}(1-c)                    &(\frac{4}{r^2}+\frac{2}{3}(1-c))
	\end{bmatrix},
\end{align*}

\begin{align*}
	K''' =
	\begin{bmatrix}
		-2                & 1        &              &           &                  & \textrm{\huge0} \\
		1                &\ddots    &\ddots        &           &                  & \\
		&\ddots    &\ddots        &\ddots     &                  & \\
		&          &\ddots        & \ddots    &\ddots            & \\
		&          &              &1    &-2           & 1 \\
		\textrm{\huge0}   &          &              &           &\frac{2}{3}       &-\frac{2}{3}
	\end{bmatrix}.
\end{align*} and the following vectors
\begin{equation}
	\mathbf{D}^n=
	\begin{bmatrix}
		{{D_1^n}} &
		0 &
		. &
		.&
		.&
		0
	\end{bmatrix}^T
\end{equation}
where
\begin{align*}
	D_1^n=\bigg( (4-a-&2b-c)-\delta (g(t_{n-1})+2g(t_n)+g(t_{n+1}))\\
	&-2\delta \sum_{m=1}^{n-2}\Big( g(t_{n-1}-t_m)+2g(t_n-t_m)+g(t_{n+1}-t_{m})\Big)\bigg) f(t^n).
\end{align*}
and
\begin{equation}
	\mathbf{N}^n=
	\begin{bmatrix}
		0&
		.&
		.&
		.&
		0&
		N_{N_x-2}^n
	\end{bmatrix},
\end{equation}
where
\begin{align*}
	N_{N_x-2}^n&=\frac{2 \triangle x}{3} \Big( (1-a)h_{N_x-1}^{n+1}+2(1-b)h_{N_x-1}^{n}+(1-c)h_{N_x-1}^{n-1} \Big)\\
	&- \frac{2\delta \triangle x}{3} \Big( g(t_{n-1})+2g(t_n)+g(t_{n+1})\Big) h_{N_x-1}^0\\
	&- \frac{4\delta \triangle x}{3}  \sum_{m=1}^{n-2} \Big( g(t_{n-1}-t_m)+2g(t_n-t_m)+g(t_{n+1}-t_m)\Big)  h_{N_x-1}^m.
\end{align*}
We define $U^n$'s recursively through the following equation:
\begin{align}\label{solnmatrixform}
	\begin{split}
		K&U^{n+1}=2K'U^{n}+K''U^{n-1}-\delta K'''\Big( g(t_{n-1})+2g(t_n)+g(t_{n+1})\Big) U^0 \\
		&- 2\delta  \sum_{m=1}^{n-2}K''' \Big( g(t_{n-1}-t_m)+2g(t_n-t_m)+g(t_{n+1}-t_m)\Big)U^m +  \mathbf{D}^n+ \mathbf{N}^n,
	\end{split}
\end{align} where $n=1,...,N_t-2.$
Finally, the numerical solution of the initial-boundary value problem is defined by the matrix
$$[u_{in}]_{0\le i\le N_x-1, 0\le n\le N_t-1},$$ where $u_{in}=U_i^n$, $0\le i\le N_x-1, 0\le n\le N_t-1.$ Note that $U_i^n$ can be found by using \eqref{solnmatrixform} for $i=1,...,N_x-2$ and $n=0,...,N_t-1$ while $U_0^n$ and $U_{N_x-1}^n$ can be found by using \eqref{U0Dirichlet-damped} and \eqref{neumanndata} for $n=0,...,N_t-1$.

\subsection{Numerical Simulations}
In this section, we present numerical examples with qualitative behaviors to provide deeper physical insight into how the energy of solutions can be influenced by the combined effects of the relaxation term and external manipulation. We also explore cases where the boundary data do not necessarily meet the assumptions required for establishing uniform decay rates. We consider the following initial-boundary value problem:

\begin{align}\label{nummodel}
	\left\{
	\begin{array}{ll}
		u_{tt}-u_{xx} &+ \int_{0}^{t}g(t-s)u_{xx}(s)ds=0    \\
		u(x,0)&=\sin(x/5)   ,\  u_{t}(x,0)=\cos(x/5) \\
		u(0,t) &=0  ,   \\
		u_x(L,t)&= h(t)
	\end{array}
	\right.
\end{align}
where $x\in[0,10\pi].$   We first consider the numerical solution when there is no external manipulation  (i.e., $h(t)=0$) to compare the dynamics later with the case of a nonzero external manipulation.

%

Figure \ref{h=0} shows that under homogeneous boundary conditions, energy decay is characterized by the decay property of the relaxation term. Compare this with Figure \ref{g=0}, where we plot the solution to the classical wave equation with no memory term, leading to conservation of energy.

\begin{figure}[H]
	\centering
	\includegraphics[scale=0.5]{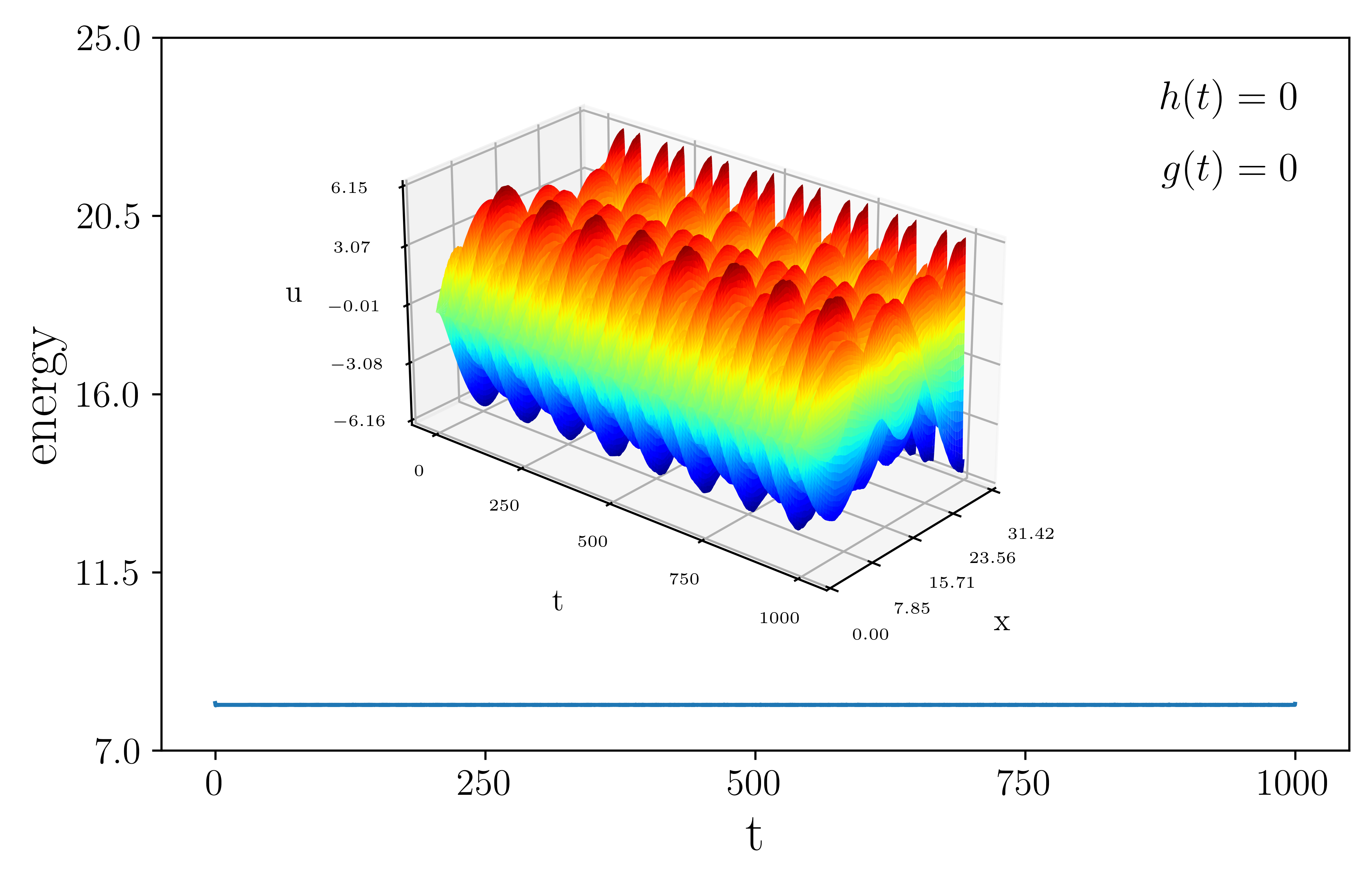}
	\caption{
		Classical wave solution, $g(t)=h(t)=0$.
	}\label{g=0}
\end{figure}
Next, we examine various types of Neumann manipulations, including cases where $h(t)$ exponentially decays, oscillates, remains asymptotically constant, or grows. We analyze the behavior of solutions under different scenarios, with the relaxation function taking one of these three forms: $$g(t)=e^{-(t+1)}, (t+1)^{-10}, \frac{0.2}{\ln^2(t+2) (t+1)}.$$

\subsubsection*{Case 1: $g(t)=e^{-(t+1)}$}
We present simulations in Figure \ref{exp(-t)}-Figure \ref{exp(-0.01t)} where the external Neumann manipulation is an exponentially decaying function satisfying assumptions of Theorem  \ref{mainresultthm}.

\begin{figure}[H]
	\centering
	\subfloat[Energy Plot, $h(t)=0$, $g(t)=e^{-(t+1)}$. \label{h=0}]{%
		\resizebox*{7cm}{!}{\includegraphics{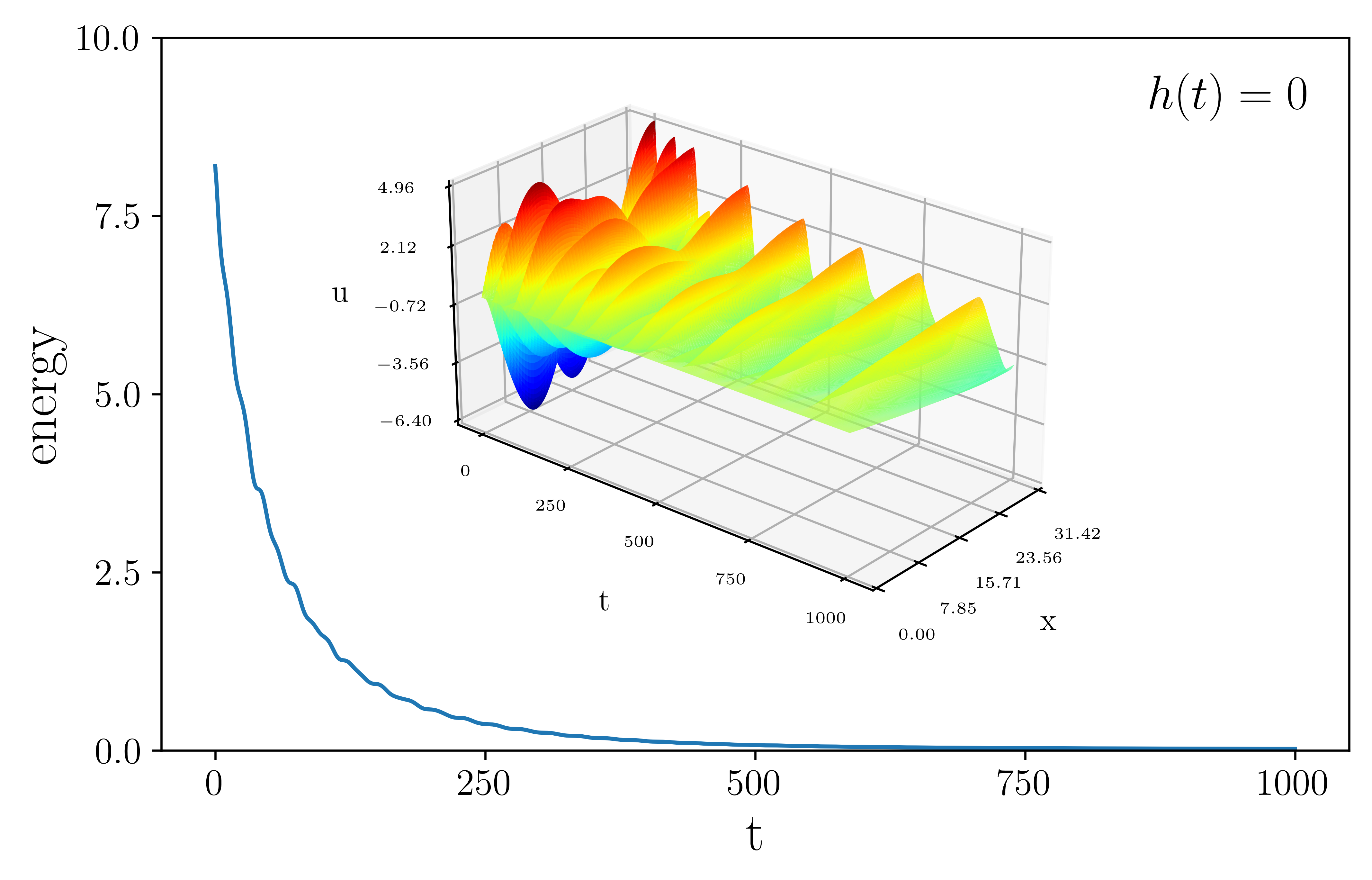}}}\hspace{5pt}
	\subfloat[Energy Plot, $h(t)=e^{-t}$, $g(t)=e^{-(t+1)}$\label{exp(-t)} ]{%
		\resizebox*{7cm}{!}{\includegraphics{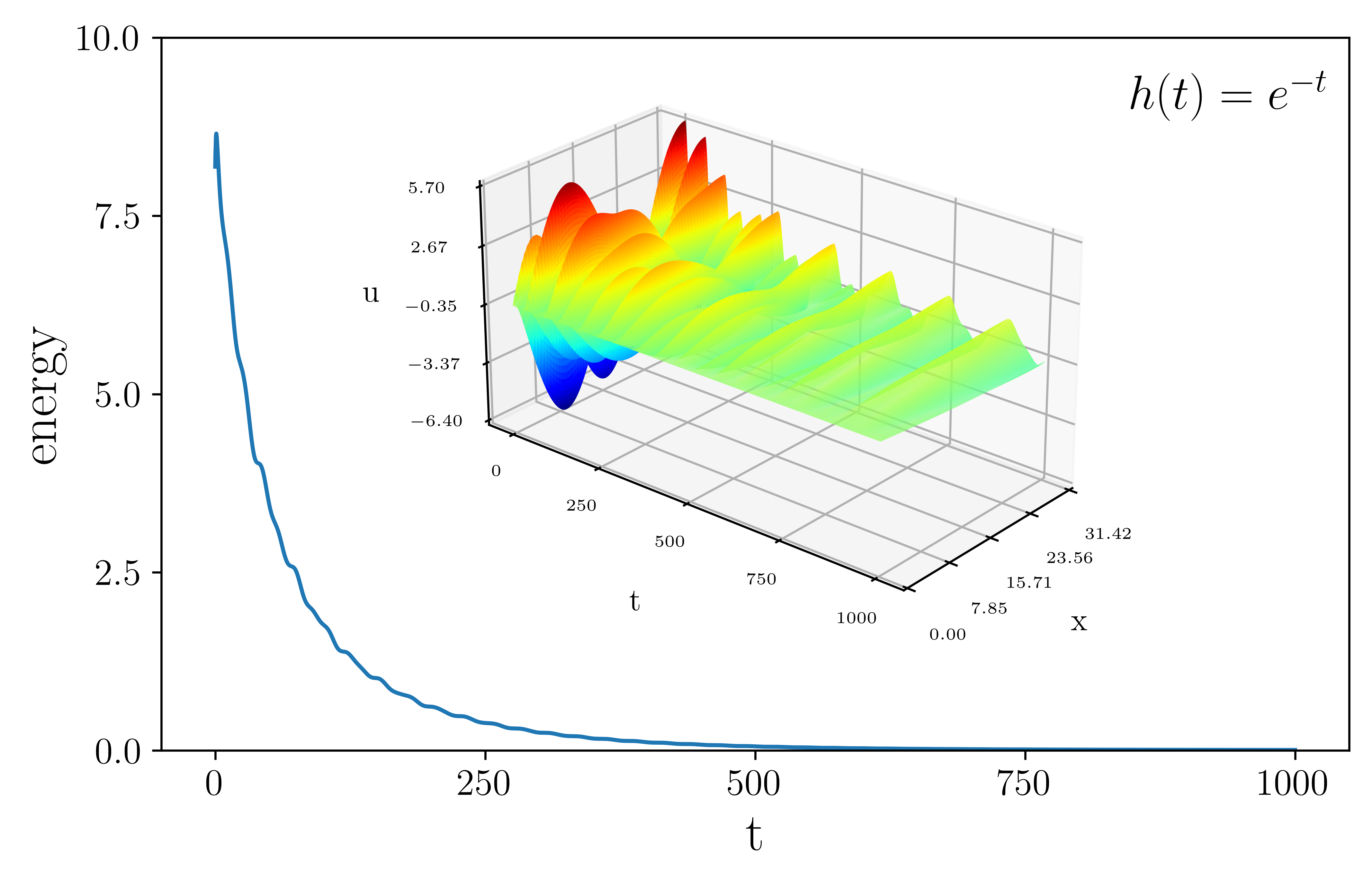}}}\\
	\subfloat[Energy Plot, $h(t)=e^{-0.01t}$, $g(t)=e^{-(t+1)}$. \label{exp(-0.01t)}]{%
		\resizebox*{7cm}{!}{\includegraphics{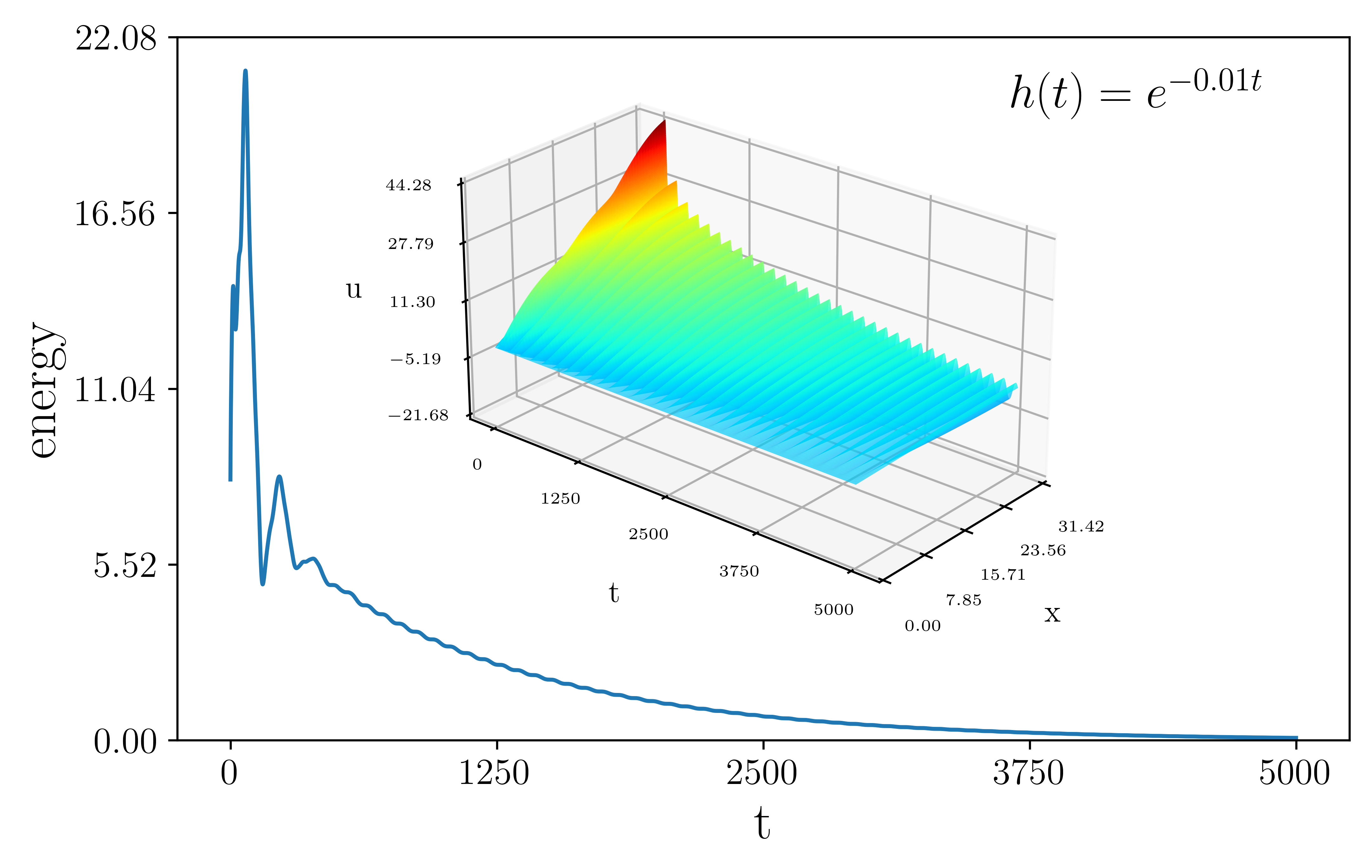}}}
	\subfloat[Energy Plot, $h(t)=\sin(t/5)$, $g(t)=e^{-(t+1)}$. \label{sin}]{%
		\resizebox*{7cm}{!}{\includegraphics{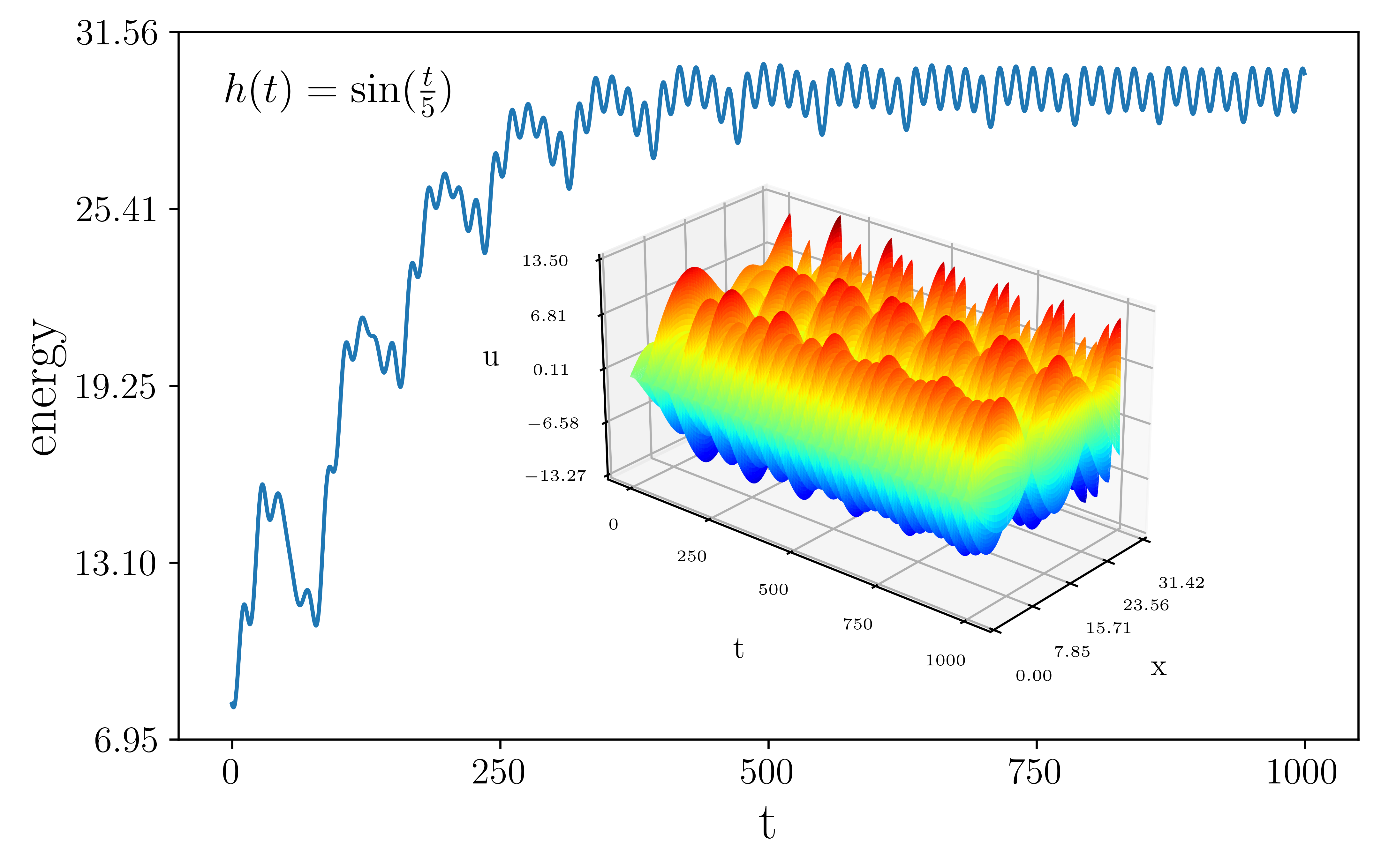}}}\\
			\subfloat[Energy Plot, $h(t)=\frac{5t}{t+1}$, $g(t)=e^{-(t+1)}$. \label{5t}]{%
			\resizebox*{7cm}{!}{\includegraphics{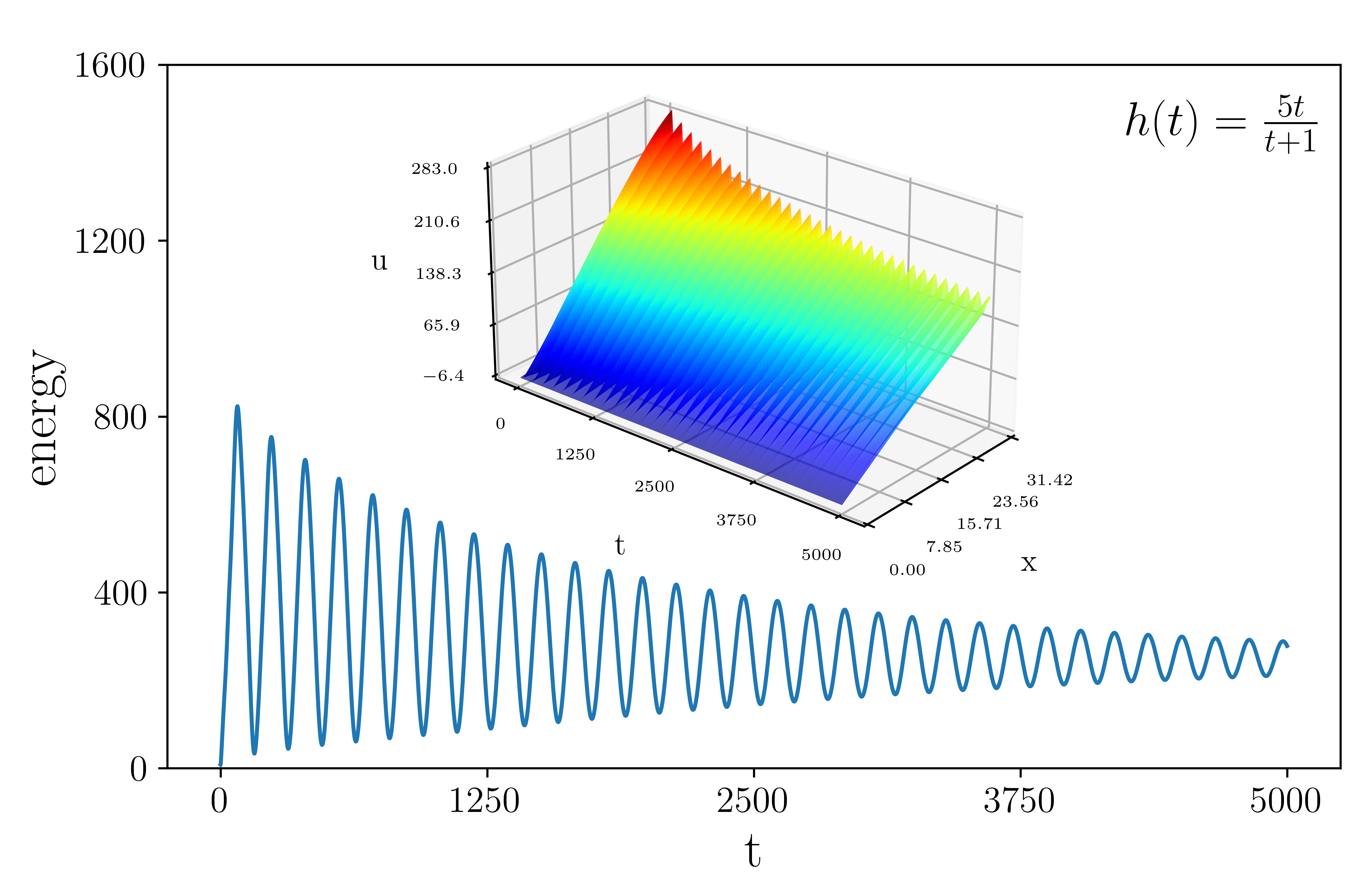}}}
				\subfloat[Energy Plot, $h(t)=\sqrt{t}$, $g(t)=e^{-(t+1)}$. \label{sqrt}]{%
				\resizebox*{7cm}{!}{\includegraphics{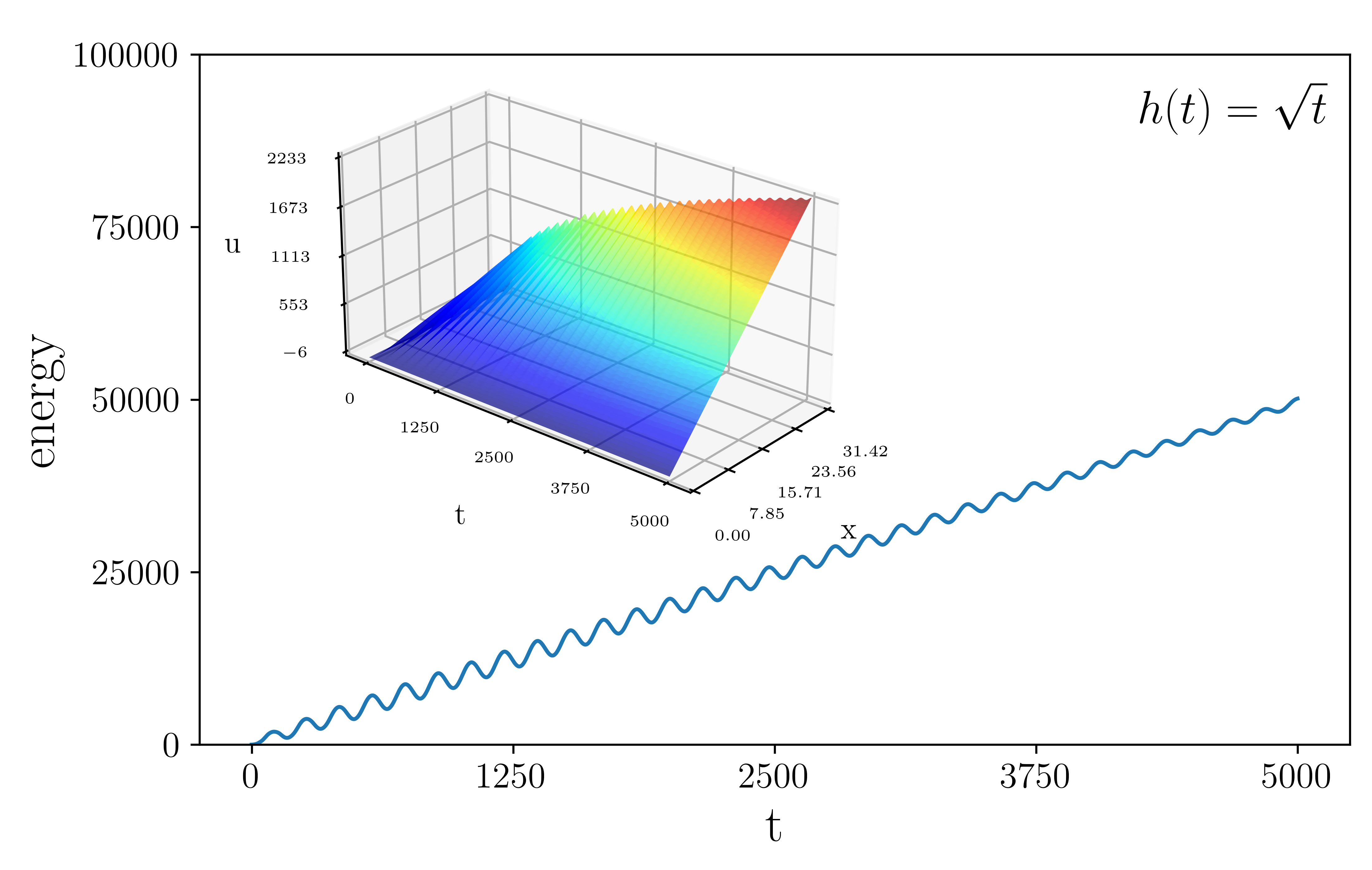}}}\\
	\caption{Viscoelastic waves subject to Neumann manipulation and exponentially decaying relaxation.} \label{sample-figure5}
\end{figure}

%

\noindent As we see from Figure \ref{exp(-t)} and Figure \ref{exp(-0.01t)}, when $h(t)$  decays, so does the energy. This is parallel to the uniform decay estimate obtained in Theorem \ref{mainresultthm}. Figure \ref{exp(-t)} shows a much faster decay due to a more rapidly decaying external input. In both figures, viscoelastic effect is quite evident.  Next, we present examples with non-decaying external manipulations and observe that the energy does not decay in contrast with the case when the manipulation were zero and only the viscoelastic effect were present. Note that these examples violate assumption \eqref{decayh} of Theorem \ref{mainresultthm}.  Figure \ref{5t} shows that, when $h(t)$ asymptotically approaches a constant, so does the average of its energy. And viscoelastic effect becomes lessen over time as in Figure \ref{sin}.



\noindent In Figure \ref{sqrt}, we observe  it is possible to pump energy from the boundary by using an increasing function as the external manipulation. Moreover, Neumann manipulation clearly seems to dominate the viscoelastic effect out here.
\subsubsection*{Case 2: $g(t)=(t+1)^{-10}$}
Here, we examine the scenario where $g(t)$ is a polynomially decaying function.

\begin{figure}[H]
	\centering
	\subfloat[Energy Plot, $h(t)=e^{-t}$, $g(t)=(t+1)^{-10}$. \label{new_g(t)_poly10_decay}]{%
		\resizebox*{7cm}{!}{\includegraphics{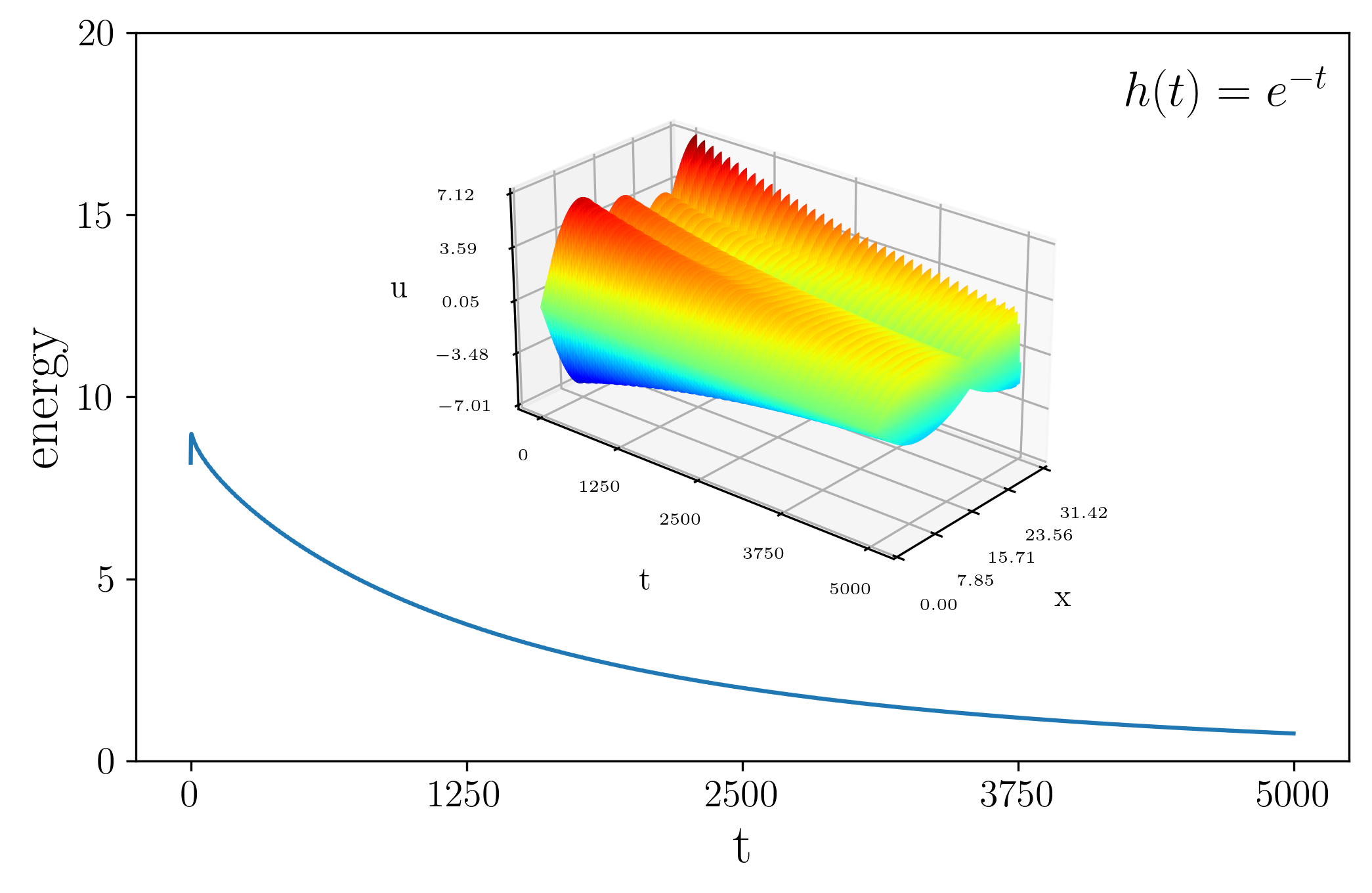}}}\hspace{5pt}
	\subfloat[Energy Plot, $h(t)=\sin(\frac{t}{5})$, $g(t)=(t+1)^{-10}$\label{g(t)_poly_10_sin_li}]{%
		\resizebox*{7cm}{!}{\includegraphics{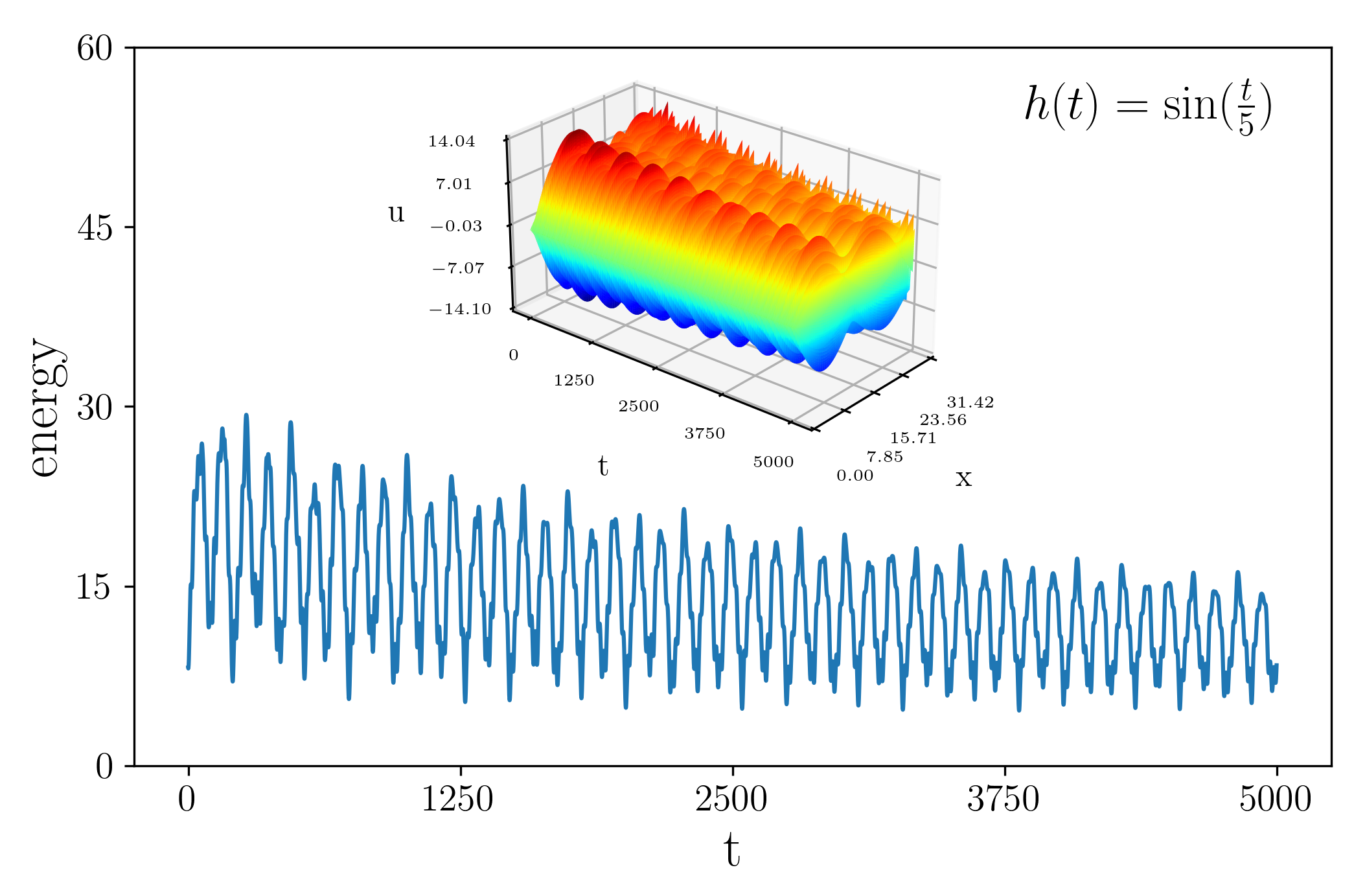}}}\\
	\subfloat[Energy Plot, $h(t)=\frac{5t}{t+1}$, $g(t)=(t+1)^{-10}$. \label{g(t)_poly_10_5t_li}]{%
		\resizebox*{7cm}{!}{\includegraphics{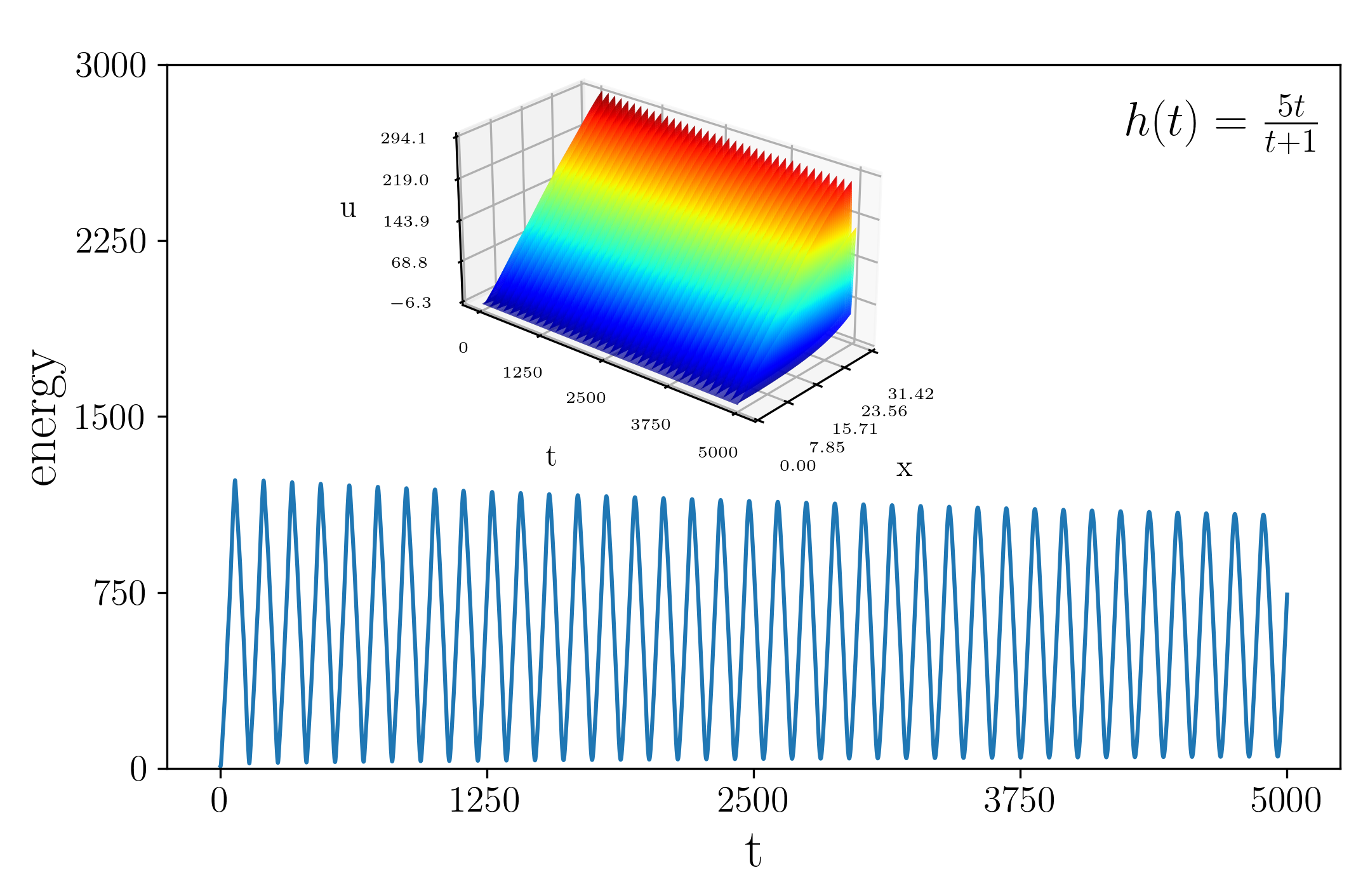}}}
	\subfloat[Energy Plot, $h(t)=\sqrt{t}$, $g(t)=(t+1)^{-10}$ .\label{g(t)_poly_10_sqrt}]{%
		\resizebox*{7cm}{!}{\includegraphics{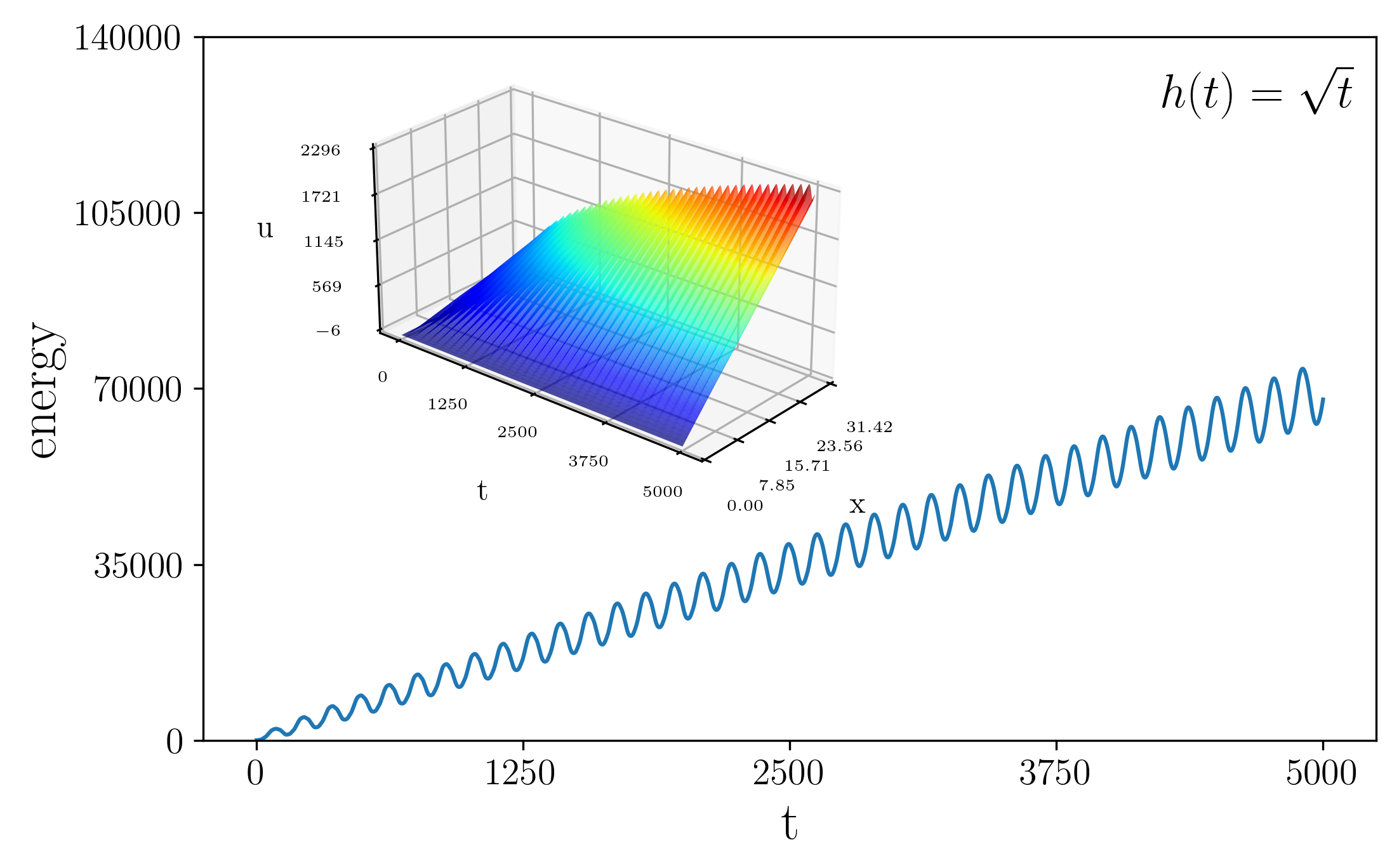}}}\\
	\caption{Viscoelastic waves subject to  Neumann manipulation and polynomially decaying relaxation.} \label{sample-figure6}
\end{figure}



\noindent Comparing Figure \ref{new_g(t)_poly10_decay} and Figure \ref{exp(-t)}, we observe that in Figure \ref{new_g(t)_poly10_decay} energy decays more slowly due to the fact that viscoelastic term is less effective because of the new choice of $g(t)$. Figure \ref{g(t)_poly_10_sin_li} - Figure \ref{g(t)_poly_10_sqrt} are analogues of Figure \ref{sin} - Figure \ref{sqrt} with the new choice of $g$.   There are certain similarities and differences to note: (i) oscillations in Figure \ref{g(t)_poly_10_sin_li} are more drastic than the ones in Figure \ref{sin} and also the average of energy has a decreasing character in the long run contrasting with the case of Figure \ref{g(t)_poly_10_sin_li}; (ii) the amplitude of oscillations in Figure \ref{5t} and Figure \ref{g(t)_poly_10_5t_li} both get smaller over time while the average of energy is stabilized around a certain level, however in the latter case these behaviors occur in much slower pace; (iii) Figure \ref{sqrt} shows a very similar long time behavior to that of Figure \ref{g(t)_poly_10_sqrt}, with the key difference being that oscillations fade in the former while they grow in the latter.

%
%
%
%
%

\subsubsection*{Case 3: $g(t)=\frac{0.2}{\ln^2(t+2) (t+1)}$}
Finally, in this section, we examine the scenario where $g(t)$ is of a special logarithmic function, see Figure \ref{g(t)_ln_li}-Figure \ref{g(t)_ln_sqrt} below.  We repeat the experiments and find that the resulting behaviors are very similar to those of Case 1 perhaps with a slightly stronger stabilization effect here.

\begin{figure}[H]
	\centering
	\subfloat[Energy Plot, $h(t)=e^{-t}$, $g(t)=\frac{0.2}{\ln^2(t+2) (t+1)}$. \label{g(t)_ln_li}]{%
		\resizebox*{7cm}{!}{\includegraphics{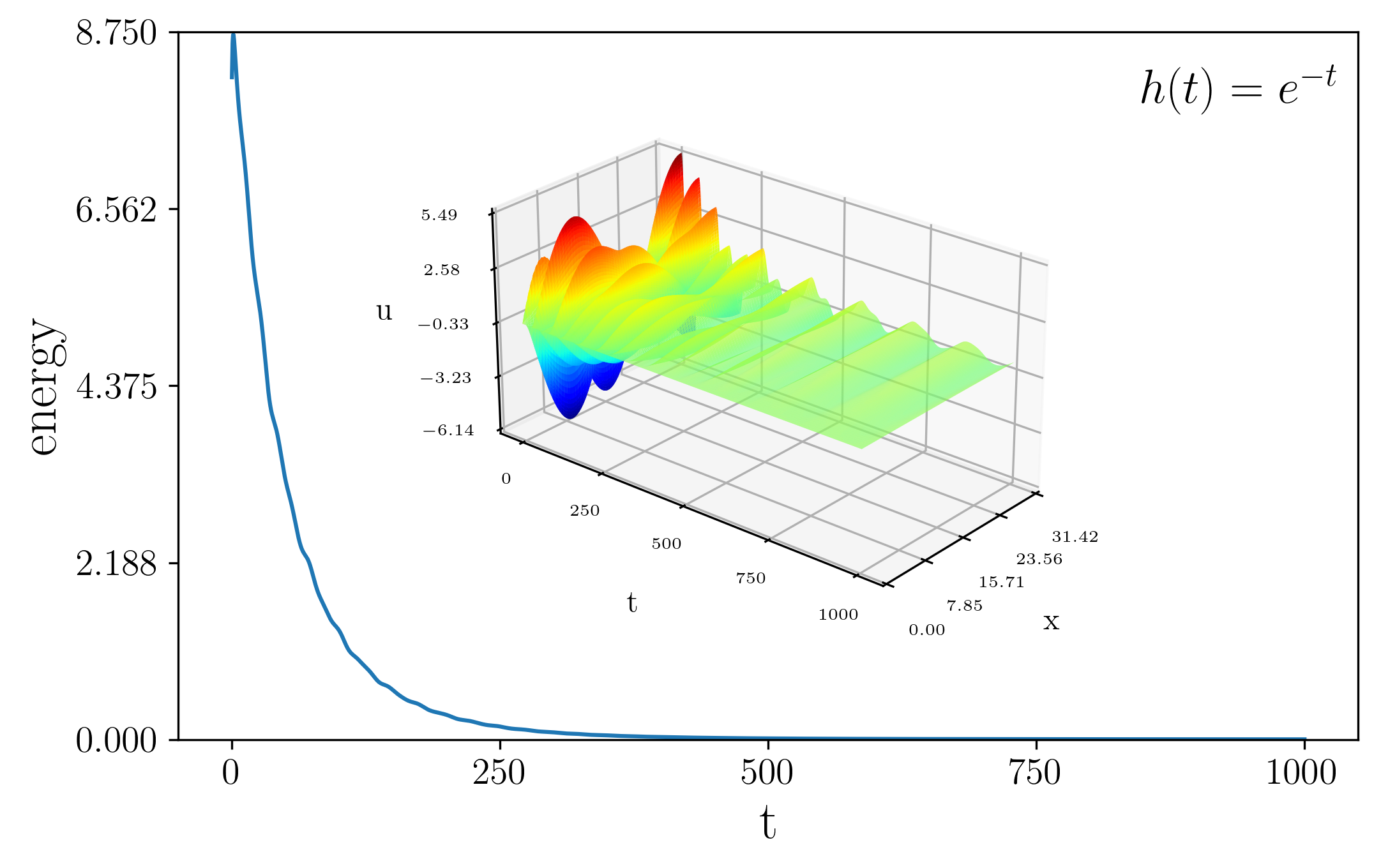}}}\hspace{5pt}
	\subfloat[Energy Plot, $h(t)=\sin(\frac{t}{5})$, $g(t)=\frac{0.2}{\ln^2(t+2) (t+1)}$\label{g(t)_ln_sin_li}]{%
		\resizebox*{7cm}{!}{\includegraphics{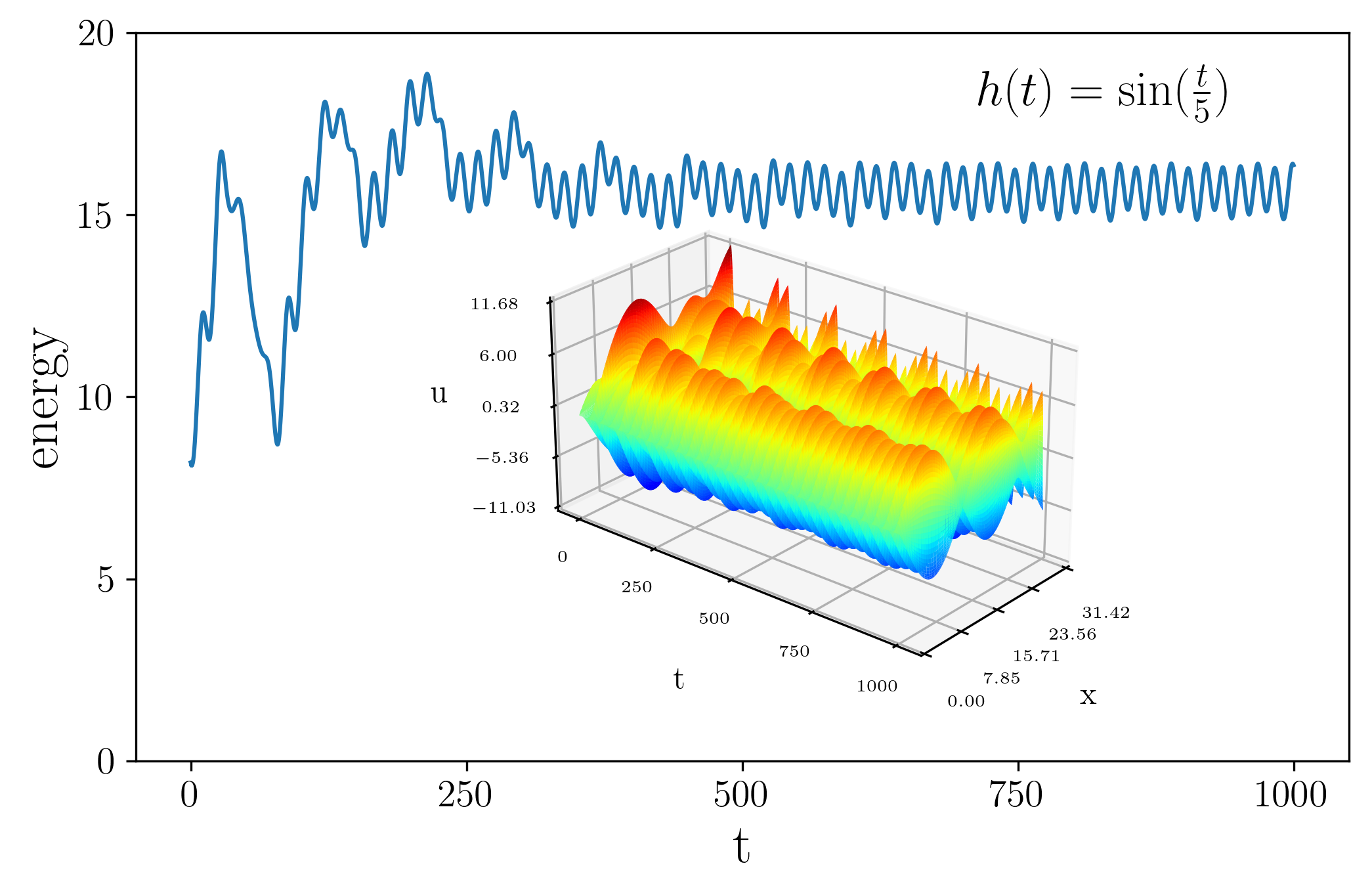}}}\\
	\subfloat[Energy Plot, $h(t)=\frac{5t}{t+1}$, $g(t)=\frac{0.2}{\ln^2(t+2) (t+1)}$. \label{g(t)_ln_5t_li}]{%
		\resizebox*{7cm}{!}{\includegraphics{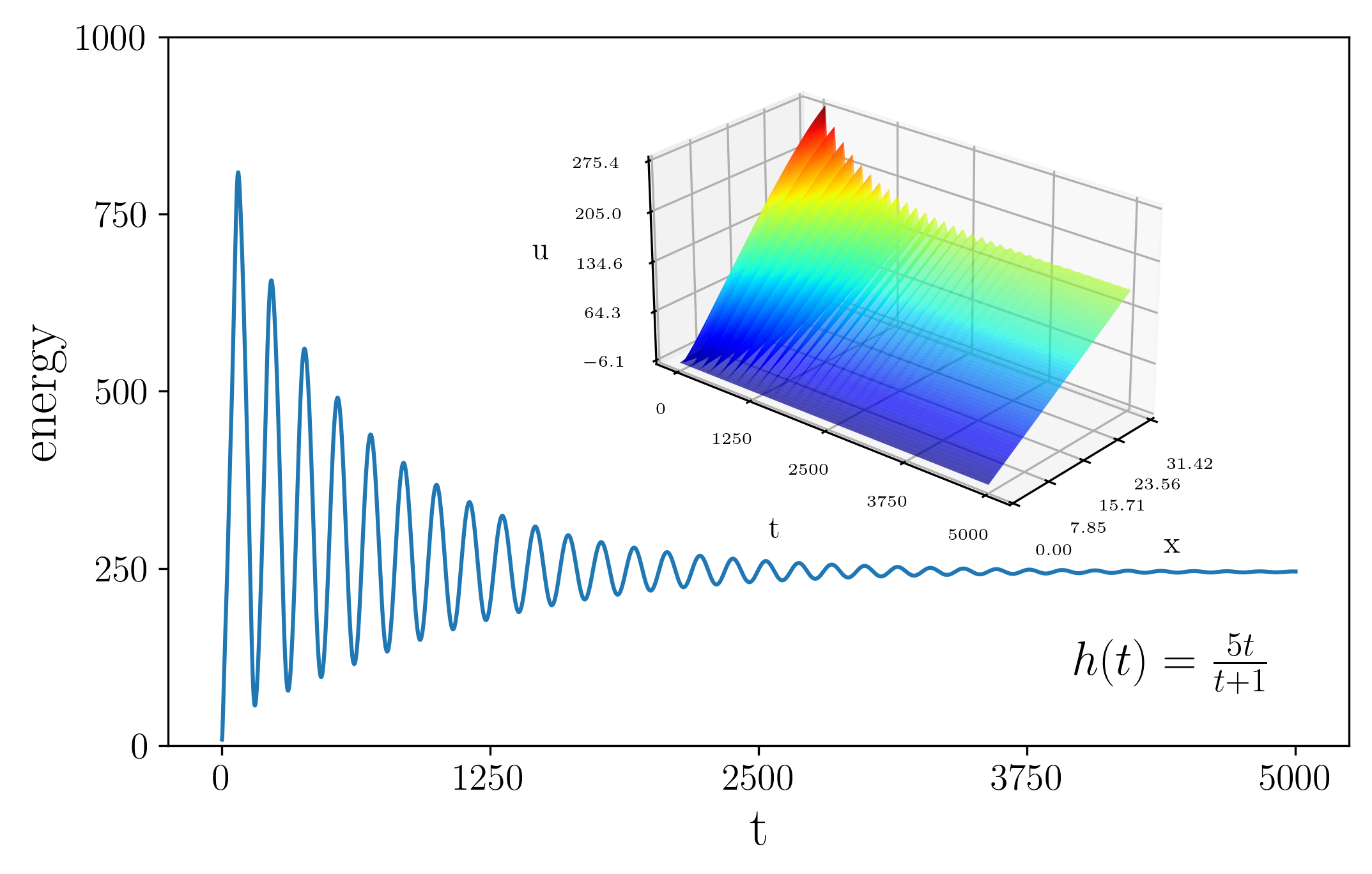}}}
	\subfloat[Energy Plot, $h(t)=\sqrt{t}$, $g(t)=\frac{0.2}{\ln^2(t+2) (t+1)}$ .\label{g(t)_ln_sqrt}]{%
		\resizebox*{7cm}{!}{\includegraphics{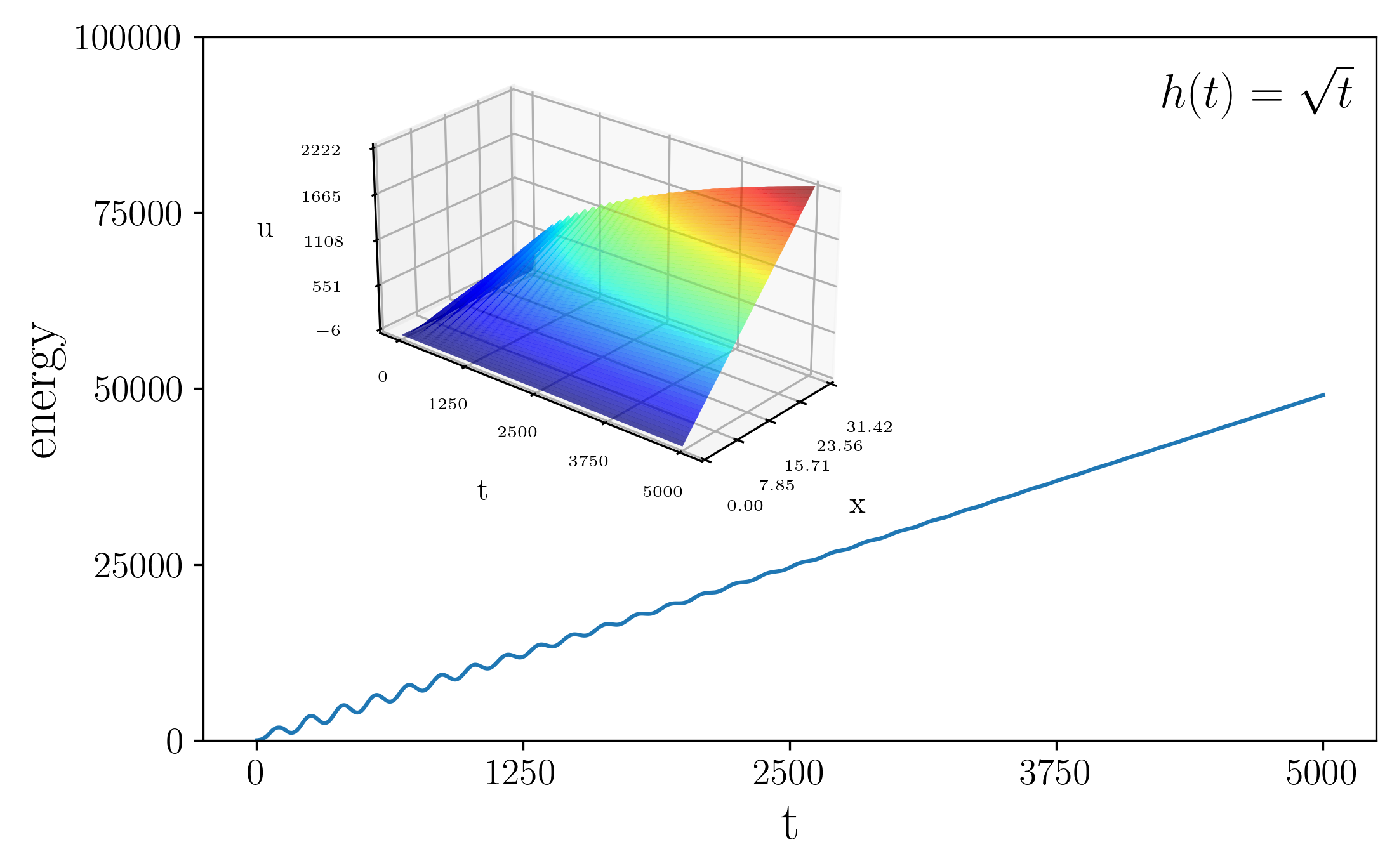}}}\\
	\caption{Viscoelastic waves subject to Neumann manipulation and logarithmic type relaxation.} \label{sample-figure7}
\end{figure}

%
%
%
%
%

\section{A few numerical experiments with rough initial data}
In the previous sections, we considered only smooth trigonometric initial data.  In this section, we consider the linear damped and viscoelastic wave equations with initial amplitude $u_0\in H^1(0,10\pi)$ and initial velocity $u_1\in L^2(0,10\pi)$. More precisely, we take $$u_0(x)=|x|,\quad u_1(x)=\left\{ 
\begin{array}{l l}
	1, \text{ for }\quad x\leq 5\pi\\
	0,  \text{ otherwise. }
\end{array}
\right.$$

We provide two numerical simulations below for each type of equation where the assumptions of respective theorems are satisfied and we observe the uniform decay properties implied by the respective main theorems.

\begin{figure}[H]
	\centering
	\subfloat[Energy Plot, $h(t)=e^{-t}$, $a(x)=e^{-x}$. \label{}]{%
		\resizebox*{7cm}{!}{\includegraphics{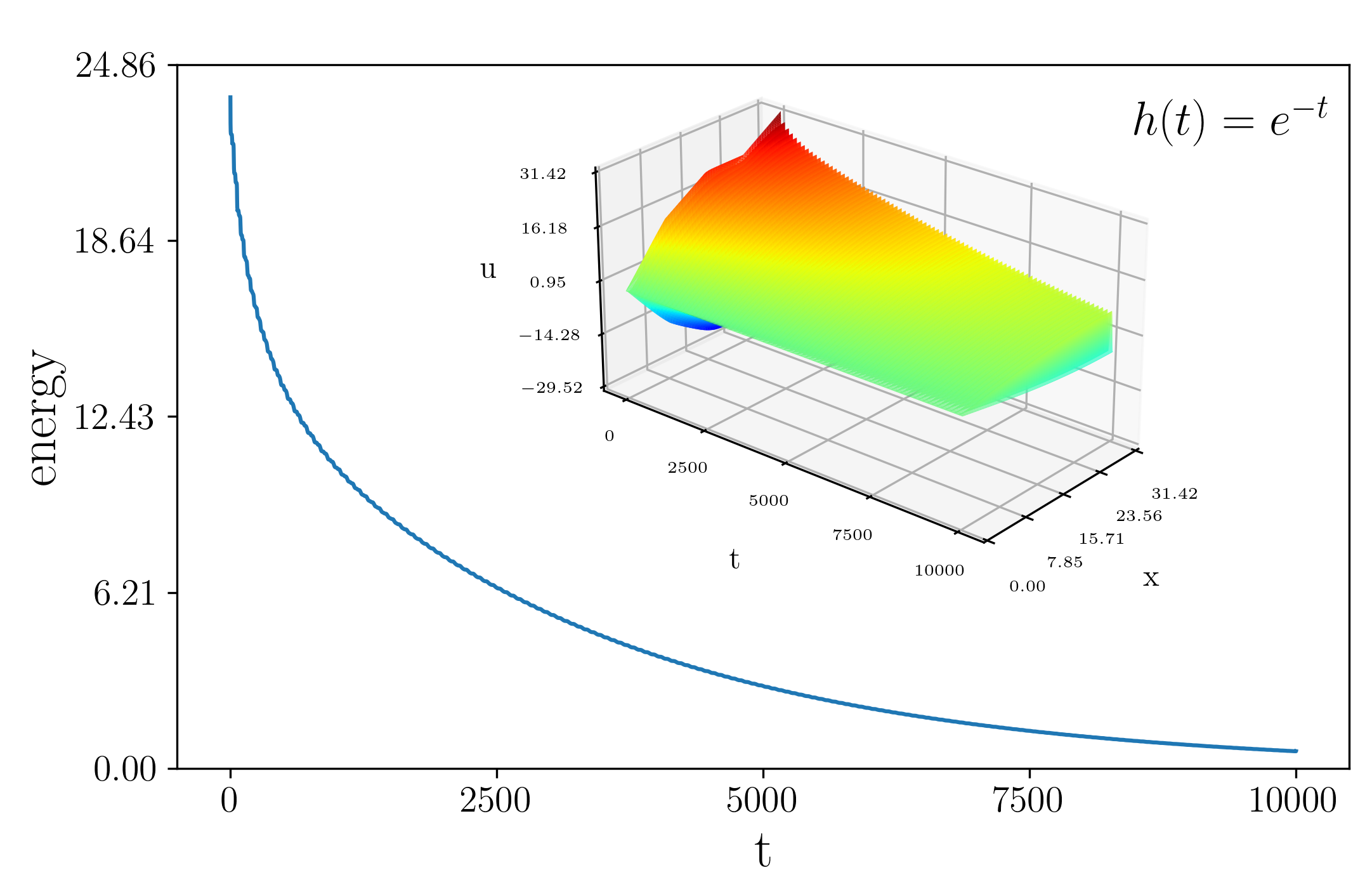}}}\hspace{5pt}
	\subfloat[Energy Plot, $h(t)=\sin(\frac{t}{5})$, $g(t)=e^{-(t+1)}$ \label{}]{%
		\resizebox*{7cm}{!}{\includegraphics{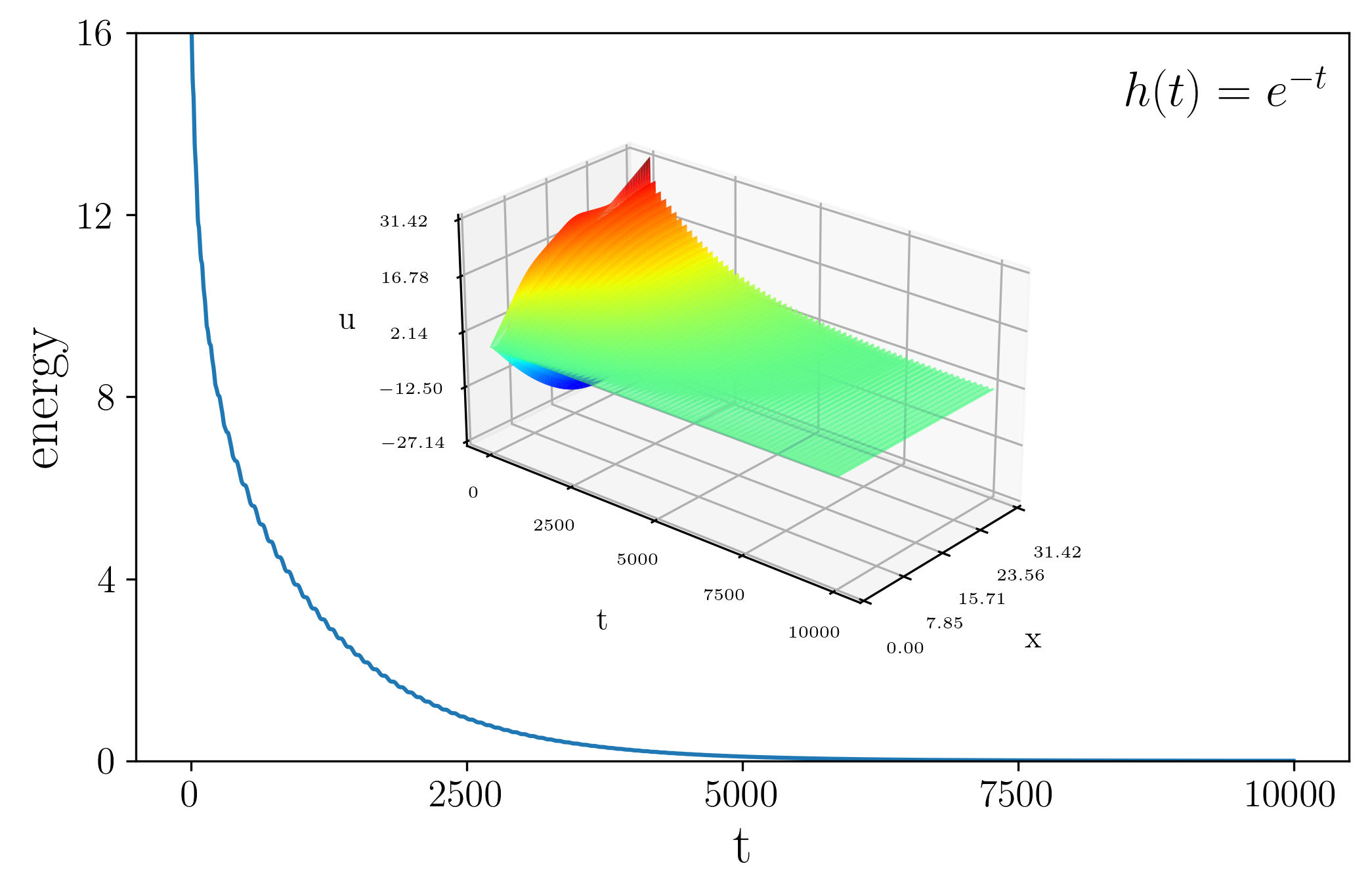}}}\\
	\caption{(a) Linear damped and (b) linear viscoelastic waves  with rough initial data} \label{sample-figure8}
\end{figure}

%


\bibliographystyle{apacite}
\bibliography{references}
\end{document}